\documentclass{article}

\usepackage[english]{babel}

\usepackage[letterpaper,top=2cm,bottom=2cm,left=3cm,right=3cm,marginparwidth=1.75cm]{geometry}

\usepackage{amsmath}
\usepackage{amssymb}
\usepackage{amsthm}

\usepackage{tikz}
\usetikzlibrary{calc,intersections,arrows.meta}

\usepackage[normalem]{ulem}

\usepackage{graphicx}
\usepackage[colorlinks=true, allcolors=blue]{hyperref}

\usepackage{xcolor}
\usepackage{subcaption}

\usepackage{array}
\usepackage{makecell}

\usepackage{subcaption}
\theoremstyle{plain}
\newtheorem{theorem}{Theorem}
\newtheorem{lemma}[theorem]{Lemma}
\newtheorem{proposition}[theorem]{Proposition}

\theoremstyle{definition}

\newtheorem{problem}[theorem]{Problem}
\newtheorem{example}[theorem]{Example}

\newcommand{\llangle}{\langle\hspace{-2.5pt}\langle}
\newcommand{\rrangle}{\rangle\hspace{-2.5pt}\rangle}

\DeclareMathOperator*{\esssup}{ess\,sup}
\DeclareMathOperator*{\argmax}{arg\,max}

\title{Approximation theorems in bilipschitz invariant theory}
\author{Jameson Cahill\thanks{Department of Mathematics and Statistics, University of North Carolina Wilmington, Wilmington, NC} \and Joseph W.\ Iverson\thanks{Department of Mathematics, Iowa State University, Ames, IA} \and Dustin G.\ Mixon\thanks{Department of Mathematics, The Ohio State University, Columbus, OH} \thanks{Translational Data Analytics Institute, The Ohio State University, Columbus, OH} \and Nathan Willey\footnotemark[3]}
\date{}

\begin{document}
\maketitle

\begin{abstract}
Bilipschitz invariant theory concerns low-distortion embeddings of orbit spaces into Euclidean space.
To date, embeddings with the smallest-possible distortion are known for only a few cases, to include: (a) planar rotations, (b) real phase retrieval, and (c) finite reflection groups.
Here, we prove that for all three of these cases, the smallest possible distortion is nearly achieved by a composition of a ``max filter bank'' with a linear transformation.
Our proof amounts to a two-step process: first, we show it suffices to demonstrate a certain inclusion of Lipschitz function spaces, and second, we prove that inclusion, using fundamentally different approaches for the three cases.
We also show that these cases interact differently with a few related function spaces, which suggests that a unified treatment would be nontrivial.
\end{abstract}

\section{Introduction}

\subsection{Bilipschitz invariant theory}

Consider a finite group $G\leq\operatorname{O}(d)$.
Its natural action partitions the Euclidean space $\mathbb{R}^d$ into orbits $[x]:=G\cdot x$, which we view as points in the orbit space
\[
\mathbb{R}^d/G
:=\{[x]:x\in\mathbb{R}^d\}.
\]
Endowing $\mathbb{R}^d/G$ with the quotient metric
\[
d_{\mathbb{R}^d/G}([x],[y])
=\min_{g\in G}\|x-gy\|,
\]
we seek a $G$-invariant map $f\colon\mathbb{R}^d\to\mathbb{R}^n$ with the property that the induced map $f^\downarrow\colon\mathbb{R}^d/G\to\mathbb{R}^n$ minimally distorts the metric.

We are motivated by applications in group-invariant signal processing~\cite{BendoryBMZS:17,BandeiraBSKNWPW:23} and machine learning~\cite{BlumSV:23}, where a metric-preserving feature map into Euclidean space grants access to standard Euclidean tools; see for example Section~2 in~\cite{CahillIM:24}.
The result is a burgeoning theory of \textit{bilipschitz invariants}~\cite{ErikssonB:18,Derkson:24,DymG:25,DymWTLB:25}, much of which takes inspiration from ideas in phase retrieval~\cite{FreemanOPT:24,AmirBDE:26}.

Explicitly, we say $f^\downarrow$ is \textbf{bilipschitz} if the constants
\[
\alpha(f^\downarrow)
:=\inf_{\substack{x,y\in\mathbb{R}^d\\ [x]\neq [y]}}\frac{\|f(x)-f(y)\|}{d_{\mathbb{R}^d/G}([x],[y])},
\qquad
\beta(f^\downarrow)
:=\sup_{\substack{x,y\in\mathbb{R}^d\\ [x]\neq [y]}}\frac{\|f(x)-f(y)\|}{d_{\mathbb{R}^d/G}([x],[y])}
\]
reside in $(0,\infty)$.
We want invariants for which the \textbf{distortion}
\[
\operatorname{dist}(f^\downarrow)
:=\frac{\beta(f^{\downarrow})}{\alpha(f^\downarrow)}
\]
is as small as possible, where we interpret $\operatorname{dist}(f^\downarrow)=\infty$ when $\alpha(f^\downarrow)=0$ or $\beta(f^\downarrow)=\infty$.
Observe that the distortion is always at least $1$, with equality precisely when $f^\downarrow$ is an isometric embedding of $\mathbb{R}^d/G$ into $\mathbb{R}^n$ times a positive scalar, namely, $\alpha(f^\downarrow)=\beta(f^\downarrow)$.
If we also consider $G$-invariant maps into infinite-dimensional Hilbert spaces, then the best-possible distortion is known as the \textbf{Euclidean distortion} of the orbit space $\mathbb{R}^d/G$:
\[
c_2(\mathbb{R}^d/G)
:=\inf_{\substack{\text{$G$-inv }f\colon\mathbb{R}^d\to H\\\text{Hilbert space $H$}}}\operatorname{dist}(f^\downarrow).
\]
In certain special cases, the exact value of the Euclidean distortion $c_2(\mathbb{R}^d/G)$ is known, and it is even known how to achieve this distortion with a finite-dimensional target space $\mathbb{R}^n$.
What follows are some important instances of this phenomenon.

\begin{example}\
\label{ex.optimal coordinates}
\begin{itemize}
\item[(a)]
\textit{Planar rotations.}
Suppose $d=2$, and let $G$ be a finite subgroup of $\operatorname{SO}(2)$ of order $r>1$.
Then $c_2(\mathbb{R}^2/G)=r\sin\frac{\pi}{2r}$.
Furthermore, if we define $h\colon\mathbb{R}^2\to\mathbb{R}\times\mathbb{C}\cong\mathbb{R}^3$ by $h(0)=0$ and
\[
h(x)
=\|x\|\cdot\big(\,
\cos\tfrac{\pi}{2r},\,\,
(\tfrac{x_1+\mathrm{i}x_2}{\|x\|})^r \sin\tfrac{\pi}{2r}\,\big)
\]
for $x\neq0$, then $h$ is $G$-invariant and $\operatorname{dist}(h^\downarrow)=c_2(\mathbb{R}^2/G)$.
(This is perhaps more intuitive after identifying $\mathbb{R}^2$ with $\mathbb{C}$, since then the second coordinate corresponds to raising the phase of the input to the $r$th power.)
This was first established by Corollary~38 and Example~18 in~\cite{CahillIM:24}, and then further generalized by Theorem~16 in~\cite{Blum-SmitEtal:25}.
\item[(b)]
\textit{Real phase retrieval.}
Suppose $G=\{\pm I\}$. 
Then $c_2(\mathbb{R}^d/G)=\sqrt{2}$.
Furthermore, if we define $h\colon\mathbb{R}^d\to\mathbb{R}^{d\times d}_{\operatorname{sym}}\cong\mathbb{R}^{\binom{d+1}{2}}$ in terms of the operator square root by
\[
h(x)
=\sqrt{xx^\top},
\]
then $h$ is $G$-invariant and $\operatorname{dist}(h^\downarrow)=c_2(\mathbb{R}^d/G)$.
This was first established by Corollary~36 and Example~16 in~\cite{CahillIM:24}, and then further generalized by Proposition~17 in~\cite{Blum-SmitEtal:25}.
\item[(c)]
\textit{Reflections.}
Suppose $G\leq\operatorname{O}(d)$ is a finite reflection group, that is, a finite subgroup of $\operatorname{O}(d)$ generated by reflections across hyperplanes through the origin of $\mathbb{R}^d$.
Then $c_2(\mathbb{R}^d/G)=1$.
Next, the hyperplanes that correspond to all reflections in $G$ divide $\mathbb{R}^d$ into polyhedral cones known as \textbf{Weyl chambers}.
If we fix a closed Weyl chamber $W\subseteq\mathbb{R}^d$ of $G$ and define $h\colon\mathbb{R}^d\to\mathbb{R}^d$ by
\[
h(x)\in [x]\cap W,
\]
then $h$ is $G$-invariant and $\operatorname{dist}(h^\downarrow)=c_2(\mathbb{R}^d/G)$.
This result should probably be considered folklore, but it appears, for example, in Lemma~8 of~\cite{MixonP:23}.
\end{itemize}
\end{example}

In all three cases above, the optimal bilipschitz embedding $h$ we describe enjoys the emergent property of \textbf{positive homogeneity}, meaning
\[
h(rx)
=rh(x)
\]
for every $x\in\mathbb{R}^d$ and $r\geq0$.
In fact, \textit{every} optimal bilipschitz embedding of $\mathbb{R}^d/G$ that is known to date exhibits this behavior (at least once we translate the target space so that $[0]\mapsto 0$); see~\cite{Blum-SmitEtal:25} for a recent survey of these results.
We do not yet have an explanation for this.

\subsection{Max filtering and main result}

In contrast with the case-specific work above, we seek a more universal approach to low-distortion Euclidean embeddings.
One kind of bilipschitz embedding known as max filtering~\cite{CahillIMP:25} gives partial progress in this direction.
Specifically, given a \textbf{template} $y\in\mathbb{R}^d$, consider the \textbf{max filter} $\mathbb{R}^d\to\mathbb{R}$ defined by
\[
x
\longmapsto\llangle [x],[y]\rrangle
:=\max_{g\in G}\langle x,gy\rangle.
\]
Then for $n=2d$ generic templates $y_1,\ldots,y_n\in\mathbb{R}^d$, it holds that
\[
f\colon x\longmapsto \left[\begin{array}{c}\llangle [x],[y_1]\rrangle\\\vdots\\ \llangle [x],[y_n]\rrangle\end{array}\right]
\]
determines a bilipschitz \textbf{max filter bank} $f^\downarrow\colon \mathbb{R}^d/G\to\mathbb{R}^n$~\cite{CahillIMP:25,BalanT:23}.
(In particular, the Euclidean distortion $c_2(\mathbb{R}^d/G)$ is finite for every finite subgroup $G \leq \operatorname{O}(d)$.)
There are explicit general bounds on the distortion of such embeddings for random templates~\cite{CahillIMP:25,MixonQ:25}, but these bounds are not believed to be tight, and in cases where $c_2(\mathbb{R}^d/G)$ is known, there tends to be a gap.
For example, in the case of Example~\ref{ex.optimal coordinates}(b), the main result in \cite{XiaXX:24} gives that the infimal distortion of max filter banks is 
\[
\sqrt{\tfrac{\pi}{\pi-2}}
\approx1.659
>1.414
\approx\sqrt{2}
=c_2(\mathbb{R}^d/G).
\]
The situation is even more extreme  in the case of Example~\ref{ex.optimal coordinates}(c), where $c_2(\mathbb{R}^d/G)=1$, but the best max filter banks have distortion as large as $\Theta(d)$, depending on $G$; see~\cite{MixonP:23}.

In some cases, this gap can be immediately closed by post-composing with a linear map.
The following example makes this explicit.

\begin{example}
\label{ex.sort is linear of max filter}
Suppose $G\leq\operatorname{O}(3)$ is the reflection group isomorphic to $S_3$ that acts on $\mathbb{R}^3$ by permuting coordinates.
Consider the templates
\[
y_1:=\left[\begin{array}{c}1\\0\\0\end{array}\right],
\qquad
y_2:=\left[\begin{array}{c}1\\1\\0\end{array}\right],
\qquad
y_3:=\left[\begin{array}{c}1\\1\\1\end{array}\right].
\]
Let $\operatorname{sort}\colon\mathbb{R}^3\to\mathbb{R}^3$ denote the function that rearranges the input coordinates in weakly decreasing order.
Then max filtering with the above templates is given by
\[
\left[\begin{array}{c}\llangle [x],[y_1]\rrangle\\\llangle [x],[y_2]\rrangle\\ \llangle [x],[y_3]\rrangle\end{array}\right]
=\left[\begin{array}{ccc}1&0&0\\1&1&0\\1&1&1\end{array}\right]\operatorname{sort}\left(\left[\begin{array}{c}x_1\\x_2\\x_3\end{array}\right]\right).
\]
Apply the inverse of the above $3\times 3$ matrix to both sides to get
\[
\operatorname{sort}\left(\left[\begin{array}{c}x_1\\x_2\\x_3\end{array}\right]\right)
=\left[\begin{array}{rrr}1&0&0\\-1&1&0\\0&-1&\phantom{-}1\end{array}\right]\left[\begin{array}{c}\llangle [x],[y_1]\rrangle\\\llangle [x],[y_2]\rrangle\\ \llangle [x],[y_3]\rrangle\end{array}\right].
\]
Notably, $[x]\mapsto\operatorname{sort}(x)$ is an instance of the optimal bilipschitz embedding described in Example~\ref{ex.optimal coordinates}(c), where $W$ consists of the vectors in $\mathbb{R}^3$ with coordinates listed in weakly decreasing order.
\end{example}

The key takeaway of Example~\ref{ex.sort is linear of max filter} is that, at least in this case:
\begin{center}
\textit{the optimal bilipschitz embedding is a linear transformation of a max filter bank.}
\end{center}
Perhaps surprisingly, there is a sense in which this seems to generalize.
Specifically, it appears as though the infimal distortion of linear transformations of max filter banks equals the Euclidean distortion, that is, the gap from max filter banks can be closed in general by post-composing with a carefully selected linear map.
We do not have a general proof of this claim, but what follows is our main result, which focuses on the three classes of groups from Example~\ref{ex.optimal coordinates}.
(Later, we provide numerics for other groups, too, which suggest that this result might hold in greater generality.)

\begin{theorem}[Main result]
\label{thm.main result}
Fix a group $G\leq\operatorname{O}(d)$ and a target dimension $n$ such that
\begin{itemize}
\item[(a)] 
$d=2$, $G$ is a nontrivial finite subgroup of $\operatorname{SO}(2)$, and $n=3$,
\item[(b)]
$G=\{\pm I\}$ and $n=\binom{d+1}{2}$, or
\item[(c)]
$G$ is a finite reflection group and $n=d$.
\end{itemize}
For each $\epsilon > 0$, there exists $m=m(\epsilon)$, along with a max filter bank $\Phi\colon \mathbb{R}^d/G\to\mathbb{R}^m$ and a linear map $L\colon\mathbb{R}^m\to\mathbb{R}^n$ such that the composition $L\circ\Phi$ has distortion at most $c_2(\mathbb{R}^d/G) + \epsilon$.
(In fact, in the case of (iii), one may take $\epsilon=0$ and $m=d$.)
\end{theorem}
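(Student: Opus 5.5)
The plan has two steps. First, reduce the theorem to a statement about approximating each coordinate of the optimal embedding $h$ from Example~\ref{ex.optimal coordinates}. Second, prove that approximation case by case.

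\textbf{Step 1: reduction.} Let $h$ be the optimal map from Example~\ref{ex.optimal coordinates}, with coordinates $h_1,\ldots,h_n$. Suppose we can find a max filter bank $\Phi$ and a linear map $L$ such that $\psi:=L\circ\Phi$ satisfies $\operatorname{Lip}(h-\psi)\le\delta$, with Lipschitz constants measured on the quotient $\mathbb{R}^d/G$. Then for all $[x]\neq[y]$, the triangle inequality gives
\[
\|\psi(x)-\psi(y)\|\ \ge\ \|h(x)-h(y)\|-\delta\, d([x],[y])\ \ge\ (\alpha(h^\downarrow)-\delta)\,d([x],[y]),
\]
and similarly $\beta(\psi^\downarrow)\le\beta(h^\downarrow)+\delta$. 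Hence $\operatorname{dist}(\psi^\downarrow)\le(\beta+\delta)/(\alpha-\delta)$, which is below $c_2+\epsilon$ once $\delta$ is small. It therefore suffices to show the following for each coordinate $h_j$ and each $\delta>0$: there exist templates $z_1,\ldots,z_k$ and scalars $c_i$ with
\[
\operatorname{Lip}\Big(h_j-\sum_i c_i\llangle\cdot,[z_i]\rrangle\Big)\le\delta.
\]
Stacking the templates from all coordinates then gives $\Phi$, and the coefficients give the rows of $L$. Put differently, $h_j$ must lie in the Lipschitz-seminorm closure of the span of max filters. Since both $h_j$ and every max filter are positively homogeneous, this is really a question about functions on the sphere.

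\textbf{Step 2, case (c).} Here $\epsilon=0$ is attained exactly, generalizing Example~\ref{ex.sort is linear of max filter}. Take the $d$ fundamental coweights $y_1,\ldots,y_d$ of the chamber $W$, that is, the vectors dual to the simple roots, which span $\mathbb{R}^d$ in the essential case. For $x\in W$ and $y_i\in W$, the max $\max_g\langle x,gy_i\rangle$ is attained at $g=1$. Hence $\llangle[x],[y_i]\rrangle=\langle h(x),y_i\rangle$, so $\Phi=Y^\top h$ for the invertible matrix $Y$ with columns $y_i$, and $L=(Y^\top)^{-1}$. If $G$ fixes a nonzero subspace, add a basis $\pm u$ of that fixed subspace as extra templates. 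The max filters with templates $u$ and $-u$ give $\langle x,u\rangle$ and $\langle x,-u\rangle$, so $\langle x,u\rangle$ is linear in them, and with $d$ well-chosen templates this still gives $m=d$.

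\textbf{Step 2, case (a).} The rotational component is $\|x\|\cos\frac{\pi}{2r}$. On the ring of radius $1$, a max filter with template $e^{i\theta}$ is $\|x\|\,\phi(\arg x-\theta)$, where $\phi(t)=\max_k\cos(t-2\pi k/r)$ is a $2\pi/r$-periodic function. So in angle $t=\arg x$ we must approximate the constant $1$ and the functions $\cos(rt)$ and $\sin(rt)$ by translates of $\phi$, and the homogeneous extension turns the Lipschitz error into a $W^{1,\infty}$ error in $t$. This is a convolution problem: $\sum_i c_i\phi(\cdot-\theta_i)$ with uniform $\theta_i$ is a Riemann sum for $\mu*\phi$. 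The $r$th Fourier coefficient of $\phi$ is nonzero, and so is the zeroth, so we can choose $\mu$ to be a trigonometric polynomial with $\mu*\phi$ equal to any prescribed combination of $1,\cos rt,\sin rt$. Because $\phi$ is Lipschitz, the Riemann sums converge in $W^{1,\infty}$.

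\textbf{Step 2, case (b).} The coordinates of $\sqrt{xx^\top}=xx^\top/\|x\|$ are $x_ix_j/\|x\|$. The max filters are $|\langle x,y\rangle|$. I would write $x_ix_j/\|x\|$ as a combination of terms $|\langle x,u\rangle|$: on the sphere, the span of $|\langle\cdot,u\rangle|$ over $u$ is dense in the even functions, and $x_ix_j$ is even. Concretely, I would use the spherical convolution identity $\int|\langle x,u\rangle|\,p(u)\,du=\lambda_2\,p(x)$ for a degree-$2$ harmonic $p$, with $\lambda_2\neq0$ by Funk–Hecke, together with a constant $\lambda_0\|x\|$ to handle the trace part. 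After that, discretize with cubature.

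\textbf{Main obstacle.} The hardest step is controlling the discretization error in the Lipschitz seminorm rather than uniformly. Each $|\langle x,u\rangle|$ is nonsmooth along $u^\perp$, so Riemann or cubature sums need not converge in $W^{1,\infty}$. In case (b) I would smooth by convolving in $u$, which gives a $C^1$ integrand in $x$ away from $0$, and then use derivative bounds that hold uniformly in $u$, combined with homogeneity to control behavior near the origin. If that fails, I would instead bound the gradient difference using the measure of the region where the approximating sum and the target disagree in derivative.
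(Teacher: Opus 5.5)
Your architecture is the paper's. A Weyl-type perturbation bound reduces everything to placing each $h_j$ in the Lipschitz closure of $\operatorname{span}\mathcal{M}$. Case (c) is exact: your coweight templates are a special case of the paper's identity $\llangle[x],[z]\rrangle=\langle h(x),h(z)\rangle$. In (a) and (b) you write $h_j=\int q\,\llangle[\cdot],[y]\rrangle\,d\omega(y)$ via Fourier series and zonal harmonics, respectively. Using only degrees $0$ and $2$ in (b) is a legitimate shortcut past the paper's Rodrigues-formula argument for all even degrees. However, Funk--Hecke only supplies a formula for $\lambda_2$, and you still owe the short beta-integral computation showing it is nonzero.

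The genuine gap is the discretization in Lipschitz norm, which is the analytic heart of the theorem and which your proposal does not establish.
\begin{itemize}
\item In (a), ``because $\phi$ is Lipschitz'' is not a sufficient reason. A general Lipschitz kernel has only an $L^\infty$ derivative, and Riemann sums of $\theta\mapsto\mu(\theta)\phi'(t-\theta)$ need not converge uniformly in $t$ (e.g.\ if $\phi'$ is a mean-zero shift of the indicator of an open dense set of small measure). What actually works is that this particular $\phi'$ is Lipschitz away from finitely many jumps, so only $O(1)$ grid cells straddle a jump.
\item In (b), your primary fix cannot work. Smoothing the kernel in $u$ replaces each max filter by an integral combination of max filters, which lies outside $\operatorname{span}\mathcal{M}$, so discretizing the smoothed functions is the original problem again.
\item The diagnosis itself is also off. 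Plain Riemann sums over a partition of $S^{d-1}$ into connected cells of diameter at most $\epsilon$ do converge in Lipschitz norm, for the same reason as in (a).
\end{itemize}

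The missing idea is the paper's Theorem~\ref{thm.integral combos are in lipschitz closure of linear of max filter}: bound the gradient error pointwise in $x$ as a Riemann-sum error in the template variable, uniformly in $x$.
\begin{itemize}
\item By Proposition~\ref{prop.rademacher}, $\|\hat p-p\|_{\operatorname{Lip}}=\esssup_x\|\nabla\hat p(x)-\nabla p(x)\|$.
\item At almost every $x$, dominated convergence (the max filters are $1$-Lipschitz) lets you differentiate under the integral sign. This gives $\nabla p(x)=\int q(y)\argmax_{u\in[y]}\langle x,u\rangle\,d\omega(y)$; in (b) the integrand is $q(y)\operatorname{sign}(\langle x,y\rangle)\,y$. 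Meanwhile $\nabla\hat p(x)$ is exactly the corresponding Riemann sum.
\item For fixed $x$, this integrand is Lipschitz in $y$ on each open Voronoi cell of $[x]$. So the cells lying inside one Voronoi cell contribute $O\big((\|q\|_{\operatorname{Lip}}+\|q\|_\infty)\epsilon\big)$ in total.
\item A connected cell not inside one Voronoi cell must meet one of at most $\binom{|G|}{2}$ hyperplanes. It therefore lies in an $\epsilon$-band of spherical measure $O(\epsilon)$, independent of $x$, and these cells contribute $O(\|q\|_\infty\epsilon)$.
\end{itemize}
Your fallback sentence gestures in this direction. But the points that make it work are absent from the proposal: measuring in the template variable for each fixed $x$, using connectivity of the cells to confine the bad ones to a thin band, and obtaining a bound uniform in $x$. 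These are exactly what upgrade uniform convergence to Lipschitz convergence.
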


Note that in each case of Theorem~\ref{thm.main result}, the value of $n$ matches the target dimension from the optimal embedding $h$ in Example~\ref{ex.optimal coordinates}, and in fact, our proof relies on approximating the coordinate functions of~$h$.
For the non-reflection group cases, the dimension $m$ of the ``hidden layer'' gets arbitrarily large as $\epsilon\to0$.
In particular, the final linear layer performs substantial dimensionality reduction.
We are not the first to post-compose an invariant with a linear map in order to decrease the target dimension~\cite{BalanT:23,BalanTW:24,BalanHS:25}, but our result is qualitatively different. 
Previous theorems establish that injectivity is maintained \textit{despite} the dimensionality reduction, i.e., ``It doesn't hurt.'' 
Here, we achieve near-optimal distortion \textit{because} of the dimensionality reduction: ``It helps!''

\subsection{Lipschitz function classes and proof of main result}

While Theorem~\ref{thm.main result} fundamentally concerns the Euclidean distortion of quotient spaces, our proof of this result amounts to an inclusion of certain spaces of Lipschitz functions, as enunciated in Theorem~\ref{thm.main result ii} below.
Before giving the connection between these theorems, we first review some basic notions involving Lipschitz continuity.

Let $X$ and $Y$ be metric spaces, where $X$ has cardinality at least $2$.
Given a function $f \colon X \to Y$, we define
\[
\alpha(f)
:=\inf_{\substack{x,y\in X\\x\neq y}}\frac{d_Y(f(x),f(y))}{d_X(x,y)},
\qquad
\beta(f)
:=\sup_{\substack{x,y\in X\\x\neq y}}\frac{d_Y(f(x),f(y))}{d_X(x,y)},
\qquad
\operatorname{dist}(f):=\frac{\beta(f)}{\alpha(f)},
\]
where as before we interpret $\operatorname{dist}(f) = \infty$ if $\alpha(f) = 0$ or $\beta(f) = \infty$.
We say $f$ is \textbf{Lipschitz} if $\beta(f) < \infty$.
For the specific case $Y = \mathbb{R}$, the Lipschitz functions $X \to \mathbb{R}$ form a vector space $\operatorname{Lip}(X)$.
Over this vector space, $\beta(\cdot)$ is a seminorm, which we alternatively denote by $\|\cdot\|_{\operatorname{Lip}}$ when we choose to think of it that way.
An application of the mean value theorem gives that the Lipschitz norm of a smooth function $\mathbb{R}^d\to\mathbb{R}$ equals the uniform norm of its gradient, and this phenomenon generalizes to Lipschitz functions (see Proposition~\ref{prop.rademacher} below).
Note that 
$\|f\|_{\operatorname{Lip}}=0$ if and only if $f$ is constant, and so $\|\cdot\|_{\operatorname{Lip}}$
induces a norm on the quotient of $\operatorname{Lip}(X)$ by the constant functions.
The \textbf{Lipschitz closure} of a set $\mathcal{F}\subseteq\operatorname{Lip}(X)$ consists of all functions that are arbitrarily close in $\|\cdot\|_{\operatorname{Lip}}$ to members of $\mathcal{F}$.
(In particular, it contains $f+c$ for every $f\in \mathcal{F}$ and every constant function $c$.)

By the following lemma (whose proof is contained in Section~\ref{sec.proof of key lemma}), we may conclude Theorem~\ref{thm.main result} once we show that the coordinate functions of an optimal bilipschitz embedding reside in the Lipschitz closure of the span of max filters.
In particular, proximity in Lipschitz norm implies proximity of distortions.
Importantly, this sort of behavior is not guaranteed by proximity in the uniform norm, which is the usual setting for universal approximation theorems.
(Also, the span of max filters does not generally enjoy a universal approximation result, even after restricting to $G$-invariant positively homogeneous Lipschitz functions; more on that later.)

\begin{lemma}
\label{lem.lipschitz closures suffice}
Fix a metric space $X$ and a group $G$ that acts on $X$ non-transitively, isometrically, and with topologically closed orbits.
Let $\mathcal{F}$ denote a set of $G$-invariant Lipschitz functions $X\to\mathbb{R}$, and suppose $h\colon X\to\mathbb{R}^n$ is a nonconstant function whose coordinate functions reside in the Lipschitz closure of $\mathcal{F}$. 
(In particular, $h$ is $G$-invariant.)
Then for every $\epsilon>0$, there exists $f\colon X\to\mathbb{R}^n$ with coordinate functions in $\mathcal{F}$ such that $\operatorname{dist}(f^\downarrow)<\operatorname{dist}(h^\downarrow)+\epsilon$.
\end{lemma}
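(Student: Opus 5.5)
The plan is a perturbation argument carried out on the orbit space $X/G$. First I would make sure the objects are well defined. Because the orbits are closed and $G$ acts isometrically, the quotient pseudometric $d_{X/G}([x],[y])=\inf_{g\in G}d_X(x,gy)$ is a genuine metric: if $[x]\neq[y]$ then $x\notin\overline{G\cdot y}=G\cdot y$, so the infimum is positive. Because the action is non-transitive, $X/G$ has at least two points, so $\alpha$, $\beta$ and $\operatorname{dist}$ make sense for maps on $X/G$.

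The basic tool is the identity $\|u^\downarrow\|_{\operatorname{Lip}}=\|u\|_{\operatorname{Lip}}$ for every $G$-invariant $u\colon X\to\mathbb{R}^k$, where $u^\downarrow$ is the induced map on $X/G$.
\begin{itemize}
\item For ``$\ge$'', note that $d_{X/G}([x],[y])\le d_X(x,y)$.
\item For ``$\le$'', write $|u(x)-u(y)|=|u(x)-u(gy)|\le\|u\|_{\operatorname{Lip}}\,d_X(x,gy)$ for every $g\in G$, and take the infimum over $g$.
\end{itemize}
In the vector-valued case I would use the Euclidean norm on $\mathbb{R}^k$. I would also record the elementary coordinate bound $\|u\|_{\operatorname{Lip}}\le\big(\sum_i\|u_i\|_{\operatorname{Lip}}^2\big)^{1/2}\le\sqrt{k}\max_i\|u_i\|_{\operatorname{Lip}}$.

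Next, dispose of the trivial cases. If $\operatorname{dist}(h^\downarrow)=\infty$, there is nothing to prove: any $f$ with coordinates in $\mathcal{F}$ works, assuming $\mathcal{F}$ is nonempty, which it must be since $h$ is nonconstant and its coordinates lie in the closure. Otherwise $0<\alpha(h^\downarrow)\le\beta(h^\downarrow)<\infty$.

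Given $\delta>0$, I would choose $f_i\in\mathcal{F}$ with $\|h_i-f_i\|_{\operatorname{Lip}}<\delta$ for each $i$. This uses only the definition of Lipschitz closure: the approximation is in the seminorm, so constants are irrelevant, and distortion ignores additive constants anyway. Set $f=(f_1,\dots,f_n)$. Then $e:=f-h$ is $G$-invariant, and by the identity and coordinate bound above, $\beta(e^\downarrow)\le\sqrt{n}\,\delta$. The triangle inequality applied to $\|f(x)-f(y)\|$ versus $\|h(x)-h(y)\|$ gives
\[
\beta(f^\downarrow)\le\beta(h^\downarrow)+\sqrt{n}\,\delta,\qquad \alpha(f^\downarrow)\ge\alpha(h^\downarrow)-\sqrt{n}\,\delta .
\]
Hence, for $\sqrt n\,\delta<\alpha(h^\downarrow)$,
\[
\operatorname{dist}(f^\downarrow)\le\frac{\beta(h^\downarrow)+\sqrt{n}\,\delta}{\alpha(h^\downarrow)-\sqrt{n}\,\delta},
\]
which tends to $\operatorname{dist}(h^\downarrow)$ as $\delta\to0$. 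Choosing $\delta$ small enough gives $\operatorname{dist}(f^\downarrow)<\operatorname{dist}(h^\downarrow)+\epsilon$.

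The only genuinely delicate point is the transfer between Lipschitz constants on $X$ and on $X/G$, together with checking that $X/G$ is a metric space with at least two points. This is exactly where the hypotheses enter: closed orbits and isometric action make the quotient a metric space, and non-transitivity gives it two points. Everything after that is the routine perturbation estimate.
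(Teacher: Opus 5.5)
Your proposal is correct and follows the paper's proof essentially step for step: the paper packages your triangle-inequality estimate as a separate ``Weyl's inequality for Lipschitz maps'' lemma and your quotient identity $\beta(u^\downarrow)=\beta(u)$ as another lemma, then uses the same $\sqrt{n}$ coordinate bound and ratio estimate. One small advantage of your version is that you prove $\beta(u^\downarrow)\le\beta(u)$ by taking the infimum over $g\in G$, so you never need the quotient distance to be attained; the paper instead asserts a minimizing $g$ exists because orbits are closed, which is not automatic in a general metric space.
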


Let $\operatorname{Lip}_{\operatorname{h}}(\mathbb{R}^d)$ denote the vector space of positively homogeneous Lipschitz functions $\mathbb{R}^d\to\mathbb{R}$ with norm $\|\cdot\|_{\operatorname{Lip}}$.
(Here, $\|\cdot\|_{\operatorname{Lip}}$ is a norm and not merely a seminorm since there are no nonzero constant functions in $\operatorname{Lip}_{\operatorname{h}}(\mathbb{R}^d)$.)
This normed vector space is complete; indeed, it is a closed subspace of the space of Lipschitz functions $f\colon\mathbb{R}^d\to\mathbb{R}$ that satisfy $f(0)=0$, which in turn is a Banach space by Proposition~2.3(b) in~\cite{Weaver:18}.
As we will see, our main result follows from Lemma~\ref{lem.lipschitz closures suffice} and the following:
\begin{equation}
\label{eq.important containment}
h_1,\ldots,h_n\in
\overline{\operatorname{span}\mathcal{M}},
\end{equation}
where
\begin{itemize}
\item
$h_1,\ldots,h_n\colon\mathbb{R}^d\to\mathbb{R}$ are the coordinate functions of the optimal invariant $h$, defined in Example~\ref{ex.optimal coordinates},
\item
$\mathcal{M}$ is the set of max filter functions $\llangle[\cdot],[y]\rrangle\colon\mathbb{R}^d\to\mathbb{R}$ for $y\in\mathbb{R}^d$, and
\item
for any subset $\mathcal{F}\subseteq\operatorname{Lip}_{\operatorname{h}}(\mathbb{R}^d)$, $\overline{\mathcal{F}}$ denotes its topological closure in $\operatorname{Lip}_{\operatorname{h}}(\mathbb{R}^d)$.
\end{itemize}
We could not find a unified argument to explain~\eqref{eq.important containment} in all three cases of Example~\ref{ex.optimal coordinates}.
Interestingly, the cases appear to be fundamentally different, since the objects involved have case-dependent interactions with other function spaces of interest.
% https://www.youtube.com/watch?v=_-agl0pOQfs
To make this explicit, we need some additional notation:
\begin{itemize}
\item
$\operatorname{PL}_{\operatorname{h}}$ is the set of positively homogeneous piecewise linear functions $\mathbb{R}^d\to\mathbb{R}$,
\item
$\mathcal{P}$ is the set of polynomial functions $S^{d-1}\to\mathbb{R}$, where $S^{d-1}$ denotes the unit sphere in $\mathbb{R}^d$,
\item
given a $G$-invariant subset $S\subseteq\mathbb{R}^d$ and a set $\mathcal{F}$ of functions $S\to\mathbb{R}$, the set $\mathcal{F}^G$ consists of the $G$-invariant members of $\mathcal{F}$, and
\item
$\operatorname{ext}$ is the \textbf{homogeneous extension} map that extends functions $S^{d-1}\to\mathbb{R}$ to positively homogeneous functions $\mathbb{R}^d\to\mathbb{R}$, i.e., $\operatorname{ext}(f)(ru)=rf(u)$ for $r\geq0$ and $u\in S^{d-1}$.
\end{itemize}
In this notation, we will identify interactions between the following subspaces of $\operatorname{Lip}_{\operatorname{h}}(\mathbb{R}^d)^G$:
\[
\operatorname{span}\{h_i\}_{i=1}^n,
\qquad
\operatorname{ext}\mathcal{P}^G,
\qquad \operatorname{span}\mathcal{M},
\qquad \operatorname{PL}_{\operatorname{h}}^G.
\]

\begin{theorem}
\label{thm.main result ii}
Fix an input dimension $d$ and a group $G\leq\operatorname{O}(d)$.
\begin{itemize}
\item[(a)] 
If $d=2$ and $G$ is a nontrivial finite subgroup of $\operatorname{SO}(2)$, then
\[
\operatorname{span}\{h_i\}_{i=1}^n
\lneq \overline{\operatorname{ext}\mathcal{P}^G}
\lneq \overline{\operatorname{span}\mathcal{M}}
\qquad
\text{and}
\qquad
\operatorname{span}\mathcal{M}
=\operatorname{PL}_{\operatorname{h}}^G\text{.}
\]
\item[(b)]
If $d>2$ and $G=\{\pm I\}$, then 
\[
\operatorname{span}\{h_i\}_{i=1}^n
\lneq \overline{\operatorname{ext}\mathcal{P}^G}
\lneq \overline{\operatorname{span}\mathcal{M}}
\lneq \overline{\operatorname{PL}_{\operatorname{h}}^G}\text{.}
\]
\item[(c)]
If $G$ is a finite reflection group, then
\[
\operatorname{span}\{h_i\}_{i=1}^n
=\operatorname{span}\mathcal{M}
=\overline{\operatorname{span}\mathcal{M}}
\lneq \overline{\operatorname{PL}_{\operatorname{h}}^G}.
\]
Furthermore, if no nonzero vector is fixed by all of $G$ (i.e., G is essential), it holds that
\[
\overline{\operatorname{ext}\mathcal{P}^G}\cap\overline{\operatorname{span}\mathcal{M}}=\{0\}\text{.}
\]
\end{itemize}
\end{theorem}

Considering (a), (b), and (c) are so different from each other, a unified proof of~\eqref{eq.important containment} escapes us.
In particular, (b) and (c) show that it is not always the case that ``linear combinations of max filters approximate all $G$-invariant positively homogeneous piecewise linear functions'', so we cannot pass to this ``simpler'' function class to explain the phenomenon.
Also, (c) establishes that the containment is not always explained by ``linear combinations of max filters approximate all invariant polynomials on the sphere''.
In fact, since the max filters span a finite-dimensional subspace of functions in the case of (c), there is no norm under which the span of the max filters universally approximates $G$-invariant positively homogeneous Lipschitz functions.

We will prove Theorem~\ref{thm.main result ii} in Section~\ref{sec.proof of containments}.
For now, we show how it and Lemma~\ref{lem.lipschitz closures suffice} combine to imply Theorem~\ref{thm.main result}:

\begin{proof}[Proof of Theorem~\ref{thm.main result}]
Consider the appropriate $G$-invariant positively homogeneous map $h\colon\mathbb{R}^d\to\mathbb{R}^n$ given in Example~\ref{ex.optimal coordinates}.
Importantly, $\operatorname{dist}(h^\downarrow)=c_2(\mathbb{R}^d/G)$.
Let $\{h_i\}_{i=1}^n$ denote the coordinate functions of $h$.
By Theorem~\ref{thm.main result ii}, each coordinate function $h_i$ resides in the Lipschitz closure of $\mathcal{F}:=\operatorname{span}\mathcal{M}$.
By Lemma~\ref{lem.lipschitz closures suffice}, there exists $f\colon\mathbb{R}^d\to\mathbb{R}^n$ with coordinate functions in $\mathcal{F}$ such that 
\[
\operatorname{dist}(f^\downarrow)<\operatorname{dist}(h^\downarrow)+\epsilon=c_2(\mathbb{R}^d/G)+\epsilon.
\]
Finally, since each coordinate function of $f$ is a finite linear combination of max filters, we may express $f^\downarrow$ as the composition of a max filter bank $\Phi$ and a linear map $L$.

For (c), we may further take $f=h$, in which case $\epsilon=0$ and $m=d$.
Indeed, since $\operatorname{span}\{h_i\}_{i=1}^n=\operatorname{span}\mathcal{M}$ in Theorem~\ref{thm.main result ii}(c), 
it follows that each $h_i$ resides in the $d$-dimensional span of max filters $\mathbb{R}^d\to\mathbb{R}$.
As such, we may take the coordinate functions of $\Phi$ to be any basis of max filters, and then $L$ is determined by expressing each $h_i^\downarrow$ in terms of this basis.
\end{proof}

These results suggest some problems, which we leave for future research.
\begin{problem}
\label{problem.first problem}
~
\begin{itemize}
\item[(a)] 
Does Theorem~\ref{thm.main result} hold for every closed subgroup $G \leq \operatorname{O}(d)$?
\item[(b)]
Does every closed subgroup $G \leq \operatorname{O}(d)$ admit a finite target dimension $n$ for which there exists a $G$-invariant map $h \colon \mathbb{R}^d \to \mathbb{R}^n$ with $\operatorname{dist}(h^{\downarrow})=c_2(\mathbb{R}^d/G)$?
\item[(c)]
If the answer to (b) is ``yes'', can $h$ be taken to be positively homogeneous?
\item[(d)]
If the answers to the above are all ``yes'', is there a unified proof of~\eqref{eq.important containment}?
\end{itemize}
\end{problem}

\begin{problem}
For each closed subgroup $G\leq\operatorname{O}(d)$, characterize the functions that reside in the closure of the span of max filters.
\end{problem}

\subsection{Roadmap}

In the next section, we prove Lemma~\ref{lem.lipschitz closures suffice}.
In cases~(a) and~(b) of Example~\ref{ex.optimal coordinates}, it turns out that the coordinate functions of $h$ can be expressed as integral combinations of max filters, which in turn reside in the Lipschitz closure of linear combinations of max filters.
We establish this in Section~\ref{sec.integral}.
Section~\ref{sec.proof of containments} is dedicated to proving Theorem~\ref{thm.main result ii}, and we conclude with numerical results in Section~\ref{sec.numerical}.

\section{Continuity of distortion and proof of Lemma~\ref{lem.lipschitz closures suffice} }
\label{sec.proof of key lemma}

Before proving Lemma~\ref{lem.lipschitz closures suffice}, we first prove a couple of simpler lemmas that may be of independent interest; accordingly, we state these lemmas in an appropriate level of generality.

Our first lemma bears some resemblance to \textit{Weyl's inequality for singular values}, which states that
\[
|\sigma_i(A)-\sigma_i(B)|
\leq\sigma_1(A-B)
\qquad
\forall\,i,
\]
where $\sigma_1(M)\geq\sigma_2(M)\geq\cdots$ denote the singular values of a real matrix $M$.
Considering the top and bottom singular values of a tall, skinny matrix $M$ equal the optimal upper and lower Lipschitz bounds of $x\mapsto Mx$ (when viewed as a map between Euclidean spaces), the following lemma properly generalizes Weyl's inequality for the top and bottom singular values.

\begin{lemma}[Weyl's inequality for Lipschitz maps]
\label{lem.weyl}
Given a metric space $X$ with at least two points and a normed vector space $V$, then for any Lipschitz $f, g \colon X \to V$, it holds that
\[
\max\Big\{~
| \alpha(f) - \alpha(g) | ,~
| \beta(f) - \beta(g) | ~\Big\}
\leq \beta(f-g).
\]
\end{lemma}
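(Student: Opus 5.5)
The plan is to prove both inequalities pointwise, using only the triangle inequality in $V$ and then passing to the infimum or supremum over pairs. Fix distinct $x,y\in X$ and abbreviate $\Delta_f:=f(x)-f(y)$ and $\Delta_g:=g(x)-g(y)$. Then
\[
\Delta_f=\Delta_g+\big((f-g)(x)-(f-g)(y)\big),
\]
and the reverse triangle inequality gives
\[
\big|\,\|\Delta_f\|-\|\Delta_g\|\,\big|\le\|(f-g)(x)-(f-g)(y)\|\le\beta(f-g)\,d_X(x,y).
\]
Dividing by $d_X(x,y)$ yields, for every pair $x\neq y$,
\[
\frac{\|\Delta_g\|}{d_X(x,y)}-\beta(f-g)\;\le\;\frac{\|\Delta_f\|}{d_X(x,y)}\;\le\;\frac{\|\Delta_g\|}{d_X(x,y)}+\beta(f-g).
\]
Note that $f-g$ is Lipschitz, since $\beta$ is subadditive, so $\beta(f-g)<\infty$ and all the quantities below are finite reals.

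Next I would take suprema over all pairs in the right-hand inequality. This gives $\beta(f)\le\beta(g)+\beta(f-g)$, and exchanging the roles of $f$ and $g$ (using $\beta(g-f)=\beta(f-g)$) gives $|\beta(f)-\beta(g)|\le\beta(f-g)$.

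For the lower constants I would use the left-hand inequality. Since $\|\Delta_g\|/d_X(x,y)\ge\alpha(g)$ for every pair, we get $\|\Delta_f\|/d_X(x,y)\ge\alpha(g)-\beta(f-g)$ for every pair. Taking the infimum over pairs gives $\alpha(f)\ge\alpha(g)-\beta(f-g)$, and the symmetric statement follows by exchanging $f$ and $g$.

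The only point needing care is that these infima and suprema are finite, so that subtracting them is legitimate. We have $0\le\alpha\le\beta<\infty$ because $f$ and $g$ are Lipschitz, and $X$ has at least two points, so the infimum and supremum range over a nonempty set of pairs. Beyond this, I do not expect any real obstacle; the argument is a direct analogue of the proof of Weyl's inequality via the triangle inequality.
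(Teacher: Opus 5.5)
Your proposal is correct and matches the paper's proof: you write $f(x)-f(y)$ as $g(x)-g(y)$ plus the increment of $f-g$, apply the triangle and reverse triangle inequalities, pass to the supremum or infimum over pairs, and then exchange $f$ and $g$. Your check that all the quantities are finite, which makes the subtractions legitimate, is a small point the paper leaves implicit.
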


\begin{proof}
Let $d$ denote the metric on $X$, and $\|\cdot\|$ the norm on $V$.
Given any $x \neq y$ in $X$, we write
\begin{align}
\frac{ \| f(x) - f(y) \|}{d(x,y)}
\nonumber
&= \frac{ \| g(x) - g(y) + f(x) - g(x) - f(y) + g(y) \| }{d(x,y)} \\[5 pt]
\label{eq.distance quotient}
&= \frac{ \| g(x) - g(y) + [f-g](x) - [f-g](y) \| }{d(x,y)}.
\end{align}
We obtain a lower bound on~\eqref{eq.distance quotient} from the reverse triangle inequality:
\[
\frac{ \| f(x) - f(y) \|}{d(x,y)}
\geq \frac{ \| g(x) - g(y) \|}{d(x,y)} - \frac{ \| [f-g](x) - [f-g](y) \| }{d(x,y)}
\geq \alpha(g) - \beta(f-g).
\]
It follows that $\alpha(f) \geq \alpha(g) - \beta(f-g)$.
By symmetry, we also have $\alpha(g) \geq \alpha(f) - \beta(f-g)$, and we may combine these to get
\[
|\alpha(f) - \alpha(g)| \leq \beta(f-g).
\]
Next, we obtain an upper bound on~\eqref{eq.distance quotient} from the triangle inequality:
\[
\frac{ \| f(x) - f(y) \|}{d(x,y)}
\leq \frac{ \| g(x) - g(y) \|}{d(x,y)} + \frac{ \| [f-g](x) - [f-g](y) \| }{d(x,y)}
\leq \beta(g) + \beta(f-g).
\]
It follows that $\beta(f) \leq \beta(g) + \beta(f-g)$.
By symmetry, we also have $\beta(g) \leq \beta(f) + \beta(f-g)$, and we may combine these to get
\[
|\beta(f) - \beta(g)| \leq \beta(f-g).
\qedhere
\]
\end{proof}

Next, we relate the Lipschitz norm of a $G$-invariant function to the Lipschitz norm of the corresponding function over the quotient space.

\begin{lemma}
\label{lem.down beta}
Given metric spaces $X$ and $Y$, consider any group $G$ that acts non-transitively on $X$ by isometries with topologically closed orbits.
For any $G$-invariant function $f\colon X\to Y$, $\beta(f^\downarrow)=\beta(f)$.
\end{lemma}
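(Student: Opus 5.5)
We prove the two inequalities $\beta(f)\leq\beta(f^\downarrow)$ and $\beta(f^\downarrow)\leq\beta(f)$ separately. Here $X/G$ carries the quotient metric $d_{X/G}([x],[y])=\inf_{g\in G}d_X(x,gy)$. Since orbits are closed and $G$ acts by isometries, this is a genuine metric: if $d_{X/G}([x],[y])=0$, then $x$ lies in the closure of $G\cdot y$, which is $G\cdot y$, so $[x]=[y]$. Non-transitivity guarantees that $X/G$ has at least two points, so that $\beta(f^\downarrow)$ is defined as a supremum over a nonempty set.

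\textbf{The inequality $\beta(f)\leq\beta(f^\downarrow)$.} Take $x\neq y$ in $X$. If $[x]=[y]$, then $G$-invariance gives $f(x)=f(y)$, so the ratio is $0$. Otherwise we use $d_{X/G}([x],[y])\leq d_X(x,y)$, obtained by taking $g=e$ in the infimum. This gives
\[
\frac{d_Y(f(x),f(y))}{d_X(x,y)}\leq \frac{d_Y(f^\downarrow([x]),f^\downarrow([y]))}{d_{X/G}([x],[y])}\leq\beta(f^\downarrow).
\]
Taking the supremum over all such pairs yields the inequality. The argument is also valid when either side is infinite.

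\textbf{The inequality $\beta(f^\downarrow)\leq\beta(f)$.} Assume $\beta(f)<\infty$, since otherwise there is nothing to prove. Fix $[x]\neq[y]$ and $\delta>0$. By definition of the infimum, choose $g\in G$ with $d_X(x,gy)<d_{X/G}([x],[y])+\delta$. Then $G$-invariance gives
\[
d_Y(f^\downarrow([x]),f^\downarrow([y]))=d_Y(f(x),f(gy))\leq\beta(f)\,d_X(x,gy)<\beta(f)\big(d_{X/G}([x],[y])+\delta\big).
\]
Letting $\delta\to0$ and dividing by $d_{X/G}([x],[y])>0$ bounds the ratio by $\beta(f)$. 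Taking the supremum over pairs of distinct orbits gives the claim.

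The only mildly delicate point is the well-definedness of the quotient metric, namely positivity for distinct orbits. This is exactly where the closed-orbit hypothesis is needed, and it also justifies the division in the last step. The infimum need not be attained when $G$ is infinite, which is why the $\delta$-approximation is used rather than a minimizing $g$. Nothing else poses an obstacle.
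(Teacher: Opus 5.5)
Your proof is correct and follows the same two-inequality argument as the paper's proof. The only difference is in the second inequality. The paper asserts that closedness of the orbits yields some $g\in G$ with $d_X(gx,y)=d_{X/G}([x],[y])$. That holds for finite $G$, which is the case used later, but closed orbits do not imply it for a general isometric action. Your $\delta$-approximation uses closedness only to get $d_{X/G}([x],[y])>0$, so it is the more careful version of the same argument.
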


\begin{proof}
To establish $\beta(f)\leq\beta(f^\downarrow)$, it suffices to show
\[
\frac{d_Y(f(x),f(y))}{d_X(x,y)}
\leq\beta(f^\downarrow)
\qquad
\forall\,x,y\in X, ~x\neq y.
\]
This clearly holds if $[x]=[y]$, since then $f(x)=f(y)$.
Otherwise, we have $[x]\neq[y]$, in which case the inequality $d_{X/G}([x],[y])\leq d_X(x,y)$ implies
\[
\frac{d_Y(f(x),f(y))}{d_X(x,y)}
\leq\frac{d_Y(f(x),f(y))}{d_{X/G}([x],[y])}
\leq\beta(f^\downarrow).
\]
For the other inequality, take any $[x]\neq[y]$ in $X/G$.
Since the orbits of $G$ in $X$ are closed, there exists $g\in G$ such that $d_X(gx,y)=d_{X/G}([x],[y])\neq0$, and so
\[
\frac{d_Y(f^\downarrow([x]),f^\downarrow([y]))}{d_{X/G}([x],[y])}
=\frac{d_Y(f(gx),f(y))}{d_X(gx,y)}
\leq\beta(f).
\]
It follows that $\beta(f^\downarrow)\leq\beta(f)$.
\end{proof}

We are now ready to prove the main result of this subsection.

\begin{proof}[Proof of Lemma~\ref{lem.lipschitz closures suffice}]
Take any nonconstant function $h\colon X\to\mathbb{R}^n$ whose coordinate functions reside in the Lipschitz closure of $\mathcal{F}$, and fix $\epsilon>0$.
We may assume $\operatorname{dist}(h^\downarrow)$ is finite without loss of generality; in particular, $0<\alpha(h^\downarrow)\leq\beta(h^\downarrow)<\infty$.
For each $i\in[n]$, let $h_i\colon X\to\mathbb{R}$ denote the $i$th coordinate function of $h$, and select $f_i\in\mathcal{F}$ such that
\[
\beta(f_i-h_i)
<\frac{1}{\sqrt{n}}\cdot\min\left\{\frac{\epsilon\cdot\alpha(h^\downarrow)}{1+\operatorname{dist}(h^\downarrow)+\epsilon},\,\alpha(h^\downarrow)\right\}.
\]
Let $f\colon X\to\mathbb{R}^n$ denote the function whose $i$th coordinate function is $f_i$.
(Notably, $h$ and $f$ are both $G$-invariant.)
Then the triangle inequality and Lemmas~\ref{lem.weyl} and~\ref{lem.down beta} together give
\[
\beta(f^\downarrow)
\leq\beta(h^\downarrow)+|\beta(f^\downarrow)-\beta(h^\downarrow)|
\leq\beta(h^\downarrow)+\beta(f^\downarrow-h^\downarrow)
=\beta(h^\downarrow)+\beta(f-h),
\]
and likewise,
\[
\alpha(f^\downarrow)
\geq\alpha(h^\downarrow)-|\alpha(f^\downarrow)-\alpha(h^\downarrow)|
\geq\alpha(h^\downarrow)-\beta(f^\downarrow-h^\downarrow)
=\alpha(h^\downarrow)-\beta(f-h).
\]
To estimate $\beta(f-h)$, note that for any $x\neq y$ in $X$, we have
\[
\frac{\|(f-h)(x)-(f-h)(y)\|}{d(x,y)}
\leq\frac{\sqrt{n}\cdot\max_i|(f_i-h_i)(x)-(f_i-h_i)(y)|}{d(x,y)}
\leq\sqrt{n}\cdot\max_i\beta(f_i-h_i),
\]
where $d$ denotes the metric on $X$.
Thus,
\[
\beta(f-h)
\leq\sqrt{n}\cdot\max_i\beta(f_i-h_i)
<\min\left\{\frac{\epsilon\cdot\alpha(h^\downarrow)}{1+\operatorname{dist}(h^\downarrow)+\epsilon},\,\alpha(h^\downarrow)\right\}.
\]
In particular, the second argument of the above minimum implies $0<\alpha(f^\downarrow)\leq\beta(f^\downarrow)<\infty$ and $\alpha(h^\downarrow) - \beta(f-h) > 0$, while the first argument implies
\[
\operatorname{dist}(f^\downarrow)
=\frac{\beta(f^\downarrow)}{\alpha(f^\downarrow)}
\leq\frac{\beta(h^\downarrow)+\beta(f-h)}{\alpha(h^\downarrow)-\beta(f-h)}
<\operatorname{dist}(h^\downarrow)+\epsilon.
\qedhere
\]
\end{proof}

\section{Integral combinations of max filters}
\label{sec.integral}

In this section, we focus on cases (a) and (b) of Example~\ref{ex.optimal coordinates}.
For these cases, we prove \eqref{eq.important containment}, i.e., the optimal coordinate functions $h_1,\ldots,h_n$ given in Example~\ref{ex.optimal coordinates} reside in $\overline{\operatorname{span}\mathcal{M}}$, namely, the Lipschitz closure of the span of max filters.
In the next subsection (specifically, in Theorem~\ref{thm.integral combos are in lipschitz closure of linear of max filter}), we show that it suffices to express each $h_i$ as an \textit{integral} combination of max filters.
(Here, we mean \textit{integral} as in $\int$, not $\mathbb{Z}$.)
We then apply this idea in Subsections~\ref{subsec:planar} and~\ref{subsec:real phase retrieval} by expressing each $h_i$ (and in fact, every invariant polynomial on the sphere) as an integral combination of max filters in cases (a) and (b) of Example~\ref{ex.optimal coordinates}, respectively.
We accomplish this by leveraging tools from harmonic analysis.
Throughout, $\omega$ denotes a Haar measure on $S^{d-1}$.

\subsection{Primary technology}

In this subsection, we introduce the primary technology that we will use in the remainder of this section.
First, we enunciate a useful generalization of the fact that the Lipschitz norm of a smooth function $f\colon\mathbb{R}^d\to\mathbb{R}$ is the supremum of the $2$-norm of the gradient of $f$.
In particular, this allows us to use familiar tools from calculus to estimate Lipschitz norms.

\begin{proposition}[strong Rademacher's theorem, see Theorem~1.41 in~\cite{Weaver:18}]
\label{prop.rademacher}
Let $f\colon\mathbb{R}^d\to\mathbb{R}$ be Lipschitz.
Then $f$ is differentiable almost everywhere, and
\[
\|f\|_{\operatorname{Lip}}
=\esssup_{x\in\mathbb{R}^d}\|\nabla f(x)\|.
\]
\end{proposition}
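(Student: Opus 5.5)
The plan is to prove the two assertions separately: first that $f$ is differentiable almost everywhere, and then the norm identity as two matching inequalities.

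\textbf{Differentiability.} This is the classical Rademacher theorem. I would reduce to one variable: Lipschitz functions on lines are absolutely continuous, hence differentiable a.e. By Fubini, the directional derivative $\partial_v f$ then exists a.e.\ for each fixed direction $v$. Next I would show $\partial_v f = \langle \nabla f, v\rangle$ a.e. To do this, pair the difference quotients against a test function $\varphi\in C_c^\infty$ and move the difference onto $\varphi$. Dominated convergence then gives
\[
\int \partial_v f\,\varphi = -\int f\,\partial_v\varphi = \int \langle\nabla f,v\rangle\varphi .
\]
Now take a countable dense set of directions $\{v_k\}$ in $S^{d-1}$. Off a null set, all $\partial_{v_k} f$ exist and are linear in $v_k$. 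The Lipschitz bound then upgrades this to full Fr\'echet differentiability: for $|h|$ small, approximate $h/|h|$ by some $v_k$ and control the error by $\|f\|_{\operatorname{Lip}}$ times the approximation gap. This upgrade is the main obstacle. The quantifiers need an Egorov-type uniformity in the limit over finitely many $v_k$ chosen to form an $\epsilon$-net.

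\textbf{Upper bound $\esssup\|\nabla f\|\le\|f\|_{\operatorname{Lip}}$.} At any point of differentiability $x$, set $v=\nabla f(x)/\|\nabla f(x)\|$. Then
\[
\|\nabla f(x)\| = \lim_{t\to0}\frac{f(x+tv)-f(x)}{t}\le \|f\|_{\operatorname{Lip}} .
\]

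\textbf{Lower bound $\|f\|_{\operatorname{Lip}}\le M:=\esssup\|\nabla f\|$.} Fix $x,y$ and mollify: $f_\delta=f*\rho_\delta$. This function is smooth and satisfies $\nabla f_\delta=(\nabla f)*\rho_\delta$, where the distributional and a.e.\ gradients coincide by the integration-by-parts identity from the first step. Hence $\|\nabla f_\delta\|\le M$ everywhere. The mean value theorem gives $|f_\delta(x)-f_\delta(y)|\le M\|x-y\|$. Finally, $f_\delta\to f$ pointwise as $\delta\to0$ by continuity of $f$, so $|f(x)-f(y)|\le M\|x-y\|$.

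This route avoids having to handle line segments that might lie entirely in the null set where $f$ is not differentiable.
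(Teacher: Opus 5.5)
The paper gives no proof of this proposition. It imports the result as a black box from Theorem~1.41 of~\cite{Weaver:18}. Your proposal is a correct, self-contained proof along standard textbook lines. It uses the Evans--Gariepy argument for a.e.\ differentiability: one-dimensional absolute continuity plus Fubini, the identification $\partial_v f=\langle\nabla f,v\rangle$ a.e.\ by integrating by parts against test functions, and then an upgrade over a net of directions. It then gives a one-line limit for $\esssup\|\nabla f\|\le\|f\|_{\operatorname{Lip}}$ and mollification for the reverse inequality.

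One point to correct: the upgrade step needs no Egorov-type argument.
Fix $\epsilon>0$ and, independently of $x$, pick finitely many $v_1,\ldots,v_N$ from your dense set forming an $\epsilon$-net of $S^{d-1}$. At a good point $x$, put $Q(v,t):=\frac{f(x+tv)-f(x)}{t}-\langle\nabla f(x),v\rangle$. Then
$|Q(v,t)-Q(v_k,t)|\le(\|f\|_{\operatorname{Lip}}+\|\nabla f(x)\|)\|v-v_k\|\le(1+\sqrt{d})\|f\|_{\operatorname{Lip}}\|v-v_k\|$.
Note that the error term includes $\|\nabla f(x)\|$, not just $\|f\|_{\operatorname{Lip}}$. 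Uniformity over $k\le N$ is just the minimum of finitely many thresholds $\delta_k(x)$, and uniformity in $x$ never enters. You should also say why Fubini applies: the set where $\partial_v f$ exists is Borel, because by continuity the Dini derivatives can be computed along rational $t$.

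As for what each approach buys: the citation keeps the paper short. Your argument makes explicit the mechanism the paper actually leans on in the proof of Theorem~\ref{thm.integral combos are in lipschitz closure of linear of max filter}. There, a gradient bound holding only on a full-measure set $S$ is used to control $\|\hat p-p\|_{\operatorname{Lip}}$, even though a segment joining two points may lie entirely outside $S$. Your mollification step is exactly what justifies this. The paper's other uses of the proposition (Proposition~\ref{prop.C^1 is Lipschitz closed} and Lemma~\ref{lem.ksl}) need only your easy pointwise direction at points of differentiability.
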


In the special case where $f\colon\mathbb{R}^d\to\mathbb{R}$ is a max filter, Proposition~\ref{prop.rademacher} requires the gradient of $f$, which is supplied by the following:

\begin{proposition}[gradient of max filter, see Theorem~29 in~\cite{CahillIM:24}]
\label{prop.grad max filter}
Given a finite group $G\leq\operatorname{O}(d)$ and $y\in\mathbb{R}^d$, it holds that the max filter $\llangle [\cdot],[y]\rrangle\colon\mathbb{R}^d\to\mathbb{R}$ is differentiable at $x\in\mathbb{R}^d$ precisely when there is a unique $z\in[y]$ for which $\langle x,z\rangle=\llangle [x],[y]\rrangle$, in which case 
\[
\nabla\llangle [\cdot],[y]\rrangle(x)
=z
=\argmax_{u\in [y]}\,\langle x,u\rangle.
\]
\end{proposition}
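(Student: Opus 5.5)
The statement is Theorem~29 of \cite{CahillIM:24}, so it could simply be cited. If I were proving it directly, I would view the max filter $F(x):=\llangle [x],[y]\rrangle=\max_{g\in G}\langle x,gy\rangle$ as a pointwise maximum of finitely many linear functionals. The functionals are $x\mapsto\langle x,z\rangle$, one for each $z$ in the finite orbit $[y]$. So $F$ is convex and piecewise linear, and the claim becomes the standard fact that a finite max of linear functions is differentiable at $x$ exactly when the active set is a singleton.

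First I handle sufficiency. Let $A(x):=\{z\in[y]:\langle x,z\rangle=F(x)\}$ be the active set. Suppose $A(x)=\{z\}$. Because $[y]$ is finite, there is a gap $\delta:=F(x)-\max_{u\in[y]\setminus\{z\}}\langle x,u\rangle>0$. Each $u\in[y]$ has $\|u\|=\|y\|$, so for $\|v\|<\delta/(2\|y\|)$ every inactive $u$ satisfies
\[
\langle x+v,u\rangle<F(x)-\delta/2<\langle x+v,z\rangle .
\]
Hence $F(x+v)=\langle x+v,z\rangle$ on a neighborhood of $x$. Then $F$ is locally linear there, so it is differentiable at $x$ with $\nabla F(x)=z$. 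This $z$ is also the unique maximizer $\argmax_{u\in[y]}\langle x,u\rangle$.

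Next I handle necessity. Suppose $A(x)$ contains distinct $z_1,z_2$. For small $t>0$ and any direction $v$, the same finiteness argument shows that only active elements matter, giving
\[
F(x+tv)=F(x)+t\max_{z\in A(x)}\langle v,z\rangle .
\]
So the one-sided directional derivative is $F'(x;v)=\max_{z\in A(x)}\langle v,z\rangle$. If $F$ were differentiable at $x$, this map would be linear in $v$, and in particular $F'(x;v)+F'(x;-v)=0$. Take $v:=z_1-z_2\neq 0$. Then
\[
F'(x;v)+F'(x;-v)\ge\langle v,z_1\rangle+\langle -v,z_2\rangle=\|z_1-z_2\|^2>0,
\]
a contradiction. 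So differentiability forces $A(x)$ to be a singleton.

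There is no real obstacle here. The only care needed is the uniform neighborhood in which the inactive orbit points stay inactive, and finiteness of $G$ together with the common norm $\|y\|$ of all orbit points gives this immediately. If $y=0$, the orbit is $\{0\}$, so $A(x)$ is always a singleton and $F\equiv 0$, and the statement holds trivially.
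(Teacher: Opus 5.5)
Your proof is correct. The paper does not prove this statement at all; it imports it from Theorem~29 of \cite{CahillIM:24}. So your argument is a genuine self-contained alternative rather than a variant of an argument in the text.

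Both directions check out:
\begin{itemize}
\item \textbf{Sufficiency.} The gap $\delta>0$ makes $F$ coincide with $\langle\cdot,z\rangle$ on the ball of radius $\delta/(2\|y\|)$ around $x$. Two degenerate cases need a convention: if the orbit is a singleton $\{z\}$ with $y\neq0$, read $\delta=+\infty$; and you handle $y=0$ separately.
\item \textbf{Necessity.} The one-sided formula $F'(x;v)=\max_{z\in A(x)}\langle v,z\rangle$ with the test direction $v=z_1-z_2$ correctly rules out differentiability whenever the active set has two points.
\end{itemize}

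The usual alternative is convex-analytic. View $F$ as the support function of $\operatorname{conv}[y]$; its subdifferential at $x$ is $\operatorname{conv}A(x)$, so differentiability is equivalent to $A(x)$ being a singleton (a Danskin-type argument). That route extends verbatim to any compact orbit, hence to compact groups. There the finiteness you rely on fails: no positive gap $\delta$ need exist, and $F$ need not be locally linear (for example, $\operatorname{SO}(2)$ acting on $\mathbb{R}^2$ gives $F(x)=\|x\|\|y\|$).

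Your version uses the finiteness of $G$ essentially, but finiteness is always in force in this paper. In exchange, you get slightly more than the statement: $F$ agrees with a linear functional on a whole neighborhood of $x$. This is the same local-linearity fact the paper invokes through open Voronoi cells in step~(ii) of the proof of Lemma~\ref{lem.differentiation under the integral sign}. You also get both directions, which the paper needs. The contrapositive of sufficiency is used to locate the nondifferentiability set in the proof of Theorem~\ref{thm.integral combos are in lipschitz closure of linear of max filter}. Necessity is used for the strict containments in Theorem~\ref{thm.main result ii}.
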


Next, to determine the gradient of an integral combination of max filters, it turns out that we can \textit{differentiate under the integral sign}, meaning the desired gradient is just the corresponding integral combination of max filter gradients:

\begin{lemma}
\label{lem.differentiation under the integral sign}
Given a finite group $G\leq\operatorname{O}(d)$ and $q\in L^1(S^{d-1})$, consider $p\colon\mathbb{R}^d\to\mathbb{R}$ defined by
\[
p(x)
=\int_{S^{d-1}}q(y)\,\llangle[x],[y]\rrangle\,d\omega(y).
\]
At every $x\in\mathbb{R}^d$ with trivial stabilizer in $G$, it holds that $p$ is differentiable with gradient
\[
\nabla p(x)
=\int_{S^{d-1}}q(y)\,\Big(\argmax_{u\in [y]}\,\langle x,u\rangle\Big)\,d\omega(y).
\]
In particular, the $\argmax$ above is unique for almost every $y\in S^{d-1}$.
\end{lemma}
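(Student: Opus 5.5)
Fix $x\in\mathbb{R}^d$ with trivial stabilizer. The plan is to differentiate under the integral sign via dominated convergence. The key input is that, for almost every $y$, the max filter $\llangle[\cdot],[y]\rrangle$ is differentiable at this fixed $x$.

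\textbf{Step 1: uniqueness of the argmax for a.e.\ $y$.} The argmax $\argmax_{u\in[y]}\langle x,u\rangle$ fails to be unique only when $\langle x,gy\rangle=\langle x,g'y\rangle$ for some $g\neq g'$ in $G$. Equivalently, $\langle g^{-1}x-g'^{-1}x,\,y\rangle=0$. Since $x$ has trivial stabilizer, $g^{-1}x\neq g'^{-1}x$, so the vector $v_{g,g'}:=g^{-1}x-g'^{-1}x$ is nonzero. Hence the bad set of $y$ lies in the finite union of hyperplane sections $\{y\in S^{d-1}:\langle v_{g,g'},y\rangle=0\}$, each of which is $\omega$-null. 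This establishes the final claim of the lemma. By Proposition~\ref{prop.grad max filter}, for each $y$ outside this null set, $\llangle[\cdot],[y]\rrangle$ is differentiable at $x$ with gradient $z(y):=\argmax_{u\in[y]}\langle x,u\rangle$.

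\textbf{Step 2: dominated convergence.} For $h\in\mathbb{R}^d$ with $h\neq 0$, write
\[
\frac{p(x+h)-p(x)-\langle w,h\rangle}{\|h\|}=\int_{S^{d-1}}q(y)\,\frac{\llangle[x+h],[y]\rrangle-\llangle[x],[y]\rrangle-\langle z(y),h\rangle}{\|h\|}\,d\omega(y),
\]
where $w$ is the proposed gradient integral. The vector $w$ is well defined because $\|z(y)\|=\|y\|=1$ and $q\in L^1$.

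The integrand tends to $0$ as $h\to0$ for a.e.\ $y$, by Step 1. It is also bounded by $2|q(y)|$, because each max filter with $\|y\|=1$ is $1$-Lipschitz: it is a maximum of $1$-Lipschitz linear functionals. So dominated convergence, applied along an arbitrary sequence $h_k\to0$, shows that the quotient tends to $0$. Hence $p$ is differentiable at $x$ with $\nabla p(x)=w$.

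Measurability of $y\mapsto z(y)$ should be checked but is routine. On each of the finitely many open regions where a fixed $g$ is the unique maximizer, $z(y)=gy$ is continuous, and the complement of these regions is null.

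\textbf{Main obstacle.} The step needing the most care is the a.e.\ pointwise differentiability at the fixed $x$, i.e., Step 1. In particular, the trivial-stabilizer hypothesis is exactly what guarantees that each $v_{g,g'}$ is nonzero. The hypothesis is genuinely needed: if $x$ is fixed by some $g\neq e$, then the maximizer is never unique for any $y$, since $g'y$ and $g'g^{-1}y$ always give the same inner product with $x$. Once Step 1 is in hand, the remaining argument is standard.
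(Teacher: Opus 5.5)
Your proof is correct and follows the paper's argument: the trivial stabilizer puts the non-unique argmaxes in a null set, and dominated convergence then applies with the bound coming from each max filter being $1$-Lipschitz. The one real difference is that you pass to the limit in the full remainder quotient as $h\to0$ in $\mathbb{R}^d$, which gives (Fr\'echet) differentiability directly, whereas the paper's coordinate quotients along $s_ne_i$, read literally, only establish the partial derivatives.

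There is one small slip in your closing remark, which does not affect the proof. If $gx=x$, the orbit point that ties with $g'y$ is $gg'y$, not $g'g^{-1}y$, and $gg'y\neq g'y$ only when $g'^{-1}gg'$ does not fix $y$. So the argmax is non-unique for almost every $y$, but not necessarily for every $y$.
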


\begin{proof}
First, we prove the ``in particular'' part.
Consider the set $B$ of all $y \in S^{d-1}$ for which $\argmax_{u \in [y]}\,\langle x,u\rangle$ is not unique.
For any $y \in B$, there exist $g,h \in G$ with $g \neq h$ for which $\langle x, gy \rangle = \langle x, hy \rangle$, that is, $\langle g^{-1}x, y \rangle = \langle h^{-1}x, y \rangle$.
Since $x$ is trivially stabilized in $G$, it follows that $y$ resides in the boundary of a Voronoi cell of $[x] \subseteq \mathbb{R}^d$.
Thus, $B$ is contained in the intersection of $S^{d-1}$ with the (finite) union of such boundaries, which has measure zero in $S^{d-1}$.

Next, fix a standard basis element $e_i$ and a sequence $\{s_n\}_{n=1}^\infty$ of nonzero real numbers such that $s_n\to0$.
Given any $y \in S^{d-1}\setminus B$, denote $u(y) \in [y]$ for $\argmax_{u\in [y]}\,\langle x,u\rangle$, which is unique since $y \notin B$.
Then, considering the $i$th coordinate of $\nabla p(x)$, it suffices to show that
\[
\lim_{n\to\infty}\frac{p(x+s_ne_i)-p(x)}{s_n}
=\int_{S^{d-1}}q(y)\,\langle e_i,u(y)\rangle\,d\omega(y),
\]
or equivalently, after re-expressing the left-hand side,
\[
\lim_{n\to\infty}\int_{S^{d-1}}q(y)\,\frac{\llangle [x+s_ne_i],[y]\rrangle - \llangle [x],[y]\rrangle}{s_n}\,d\omega(y)
= \int_{S^{d-1}}q(y)\,\langle e_i,u(y)\rangle\,d\omega(y).
\]
We will prove the above identity with an application of the Lebesgue dominated convergence theorem.
Indeed, it suffices to verify that the functions
\[
f_n(y)
:=q(y)\,\frac{\llangle [x+s_ne_i],[y] \rrangle - \llangle [x],[y] \rrangle}{s_n}
\]
have the properties that
\begin{itemize}
\item[(i)]
for every $y \in S^{d-1}$, $|f_n(y)| \leq |q(y)|$, and
\item[(ii)]
for every $y\in S^{d-1}\setminus B$, $\displaystyle\lim_{n\to \infty} f_n(y) = q(y)\,\langle e_i,u(y)\rangle$.
\end{itemize}
For (i), note that $x \mapsto \llangle [x],[y]\rrangle$ is $1$-Lipschitz for every $y\in S^{d-1}$, and so
\[
|f_n(y)|
= |q(y)|\cdot\frac{|\llangle [x+s_ne_i],[y] \rrangle - \llangle [x],[y] \rrangle|}{\|(x+s_n e_i) - x\|}
\leq |q(y)|.
\]
For (ii), consider any $y \in S^{d-1} \setminus B$, and recall that $u(y)$ uniquely maximizes the linear functional $\langle x,\cdot\rangle$ over the orbit $[y]$.
Among the Voronoi cells for the orbit $[y]$, $x$ resides in the open Voronoi cell $V$ containing $u(y)$.
For $n$ sufficiently large, it holds that $x+s_n e_i$ also resides in $V$, in which case
\[
f_n(y)
=q(y)\,\frac{\llangle [x+s_ne_i],[y] \rrangle - \llangle [x],[y] \rrangle}{s_n}
=q(y)\,\frac{\langle x+s_ne_i,u(y)\rangle-\langle x,u(y)\rangle}{s_n}
=q(y)\,\langle e_i,u(y)\rangle.
\qedhere
\]
\end{proof}

Finally, we establish the main result of this subsection, namely, that (sufficiently nice) integral combinations of max filters reside in the Lipschitz closure of the span of max filters:

\begin{theorem}
\label{thm.integral combos are in lipschitz closure of linear of max filter}
Suppose $G\leq\operatorname{O}(d)$ is finite, and given a $G$-invariant positively homogeneous function $p\colon \mathbb{R}^d\to\mathbb{R}$, suppose there exists a Lipschitz function $q\colon S^{d-1}\to\mathbb{R}$ such that
\begin{equation}
\label{eq.p as an integral combination of max filters}
p(x)
=\int_{S^{d-1}}q(y)\,\llangle[x],[y]\rrangle\,d\omega(y)
\end{equation}
for all $x\in S^{d-1}$ (hence all $x \in \mathbb{R}^d$, by positive homogeneity).
Then $p$ resides in the Lipschitz closure of the span of max filters $x \mapsto \llangle [x],[y] \rrangle$ for $y \in S^{d-1}$, i.e., $p\in\overline{\operatorname{span}\mathcal{M}}$.
\end{theorem}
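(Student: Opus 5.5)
We approximate the integral in~\eqref{eq.p as an integral combination of max filters} by Riemann-type sums and control the error in Lipschitz norm through gradients.

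Fix $\delta>0$ and partition $S^{d-1}$ into finitely many measurable pieces $A_1,\ldots,A_N$ of diameter at most $\delta$, choosing $y_j\in A_j$. Define
\[
p_\delta(x):=\sum_{j=1}^N q(y_j)\,\omega(A_j)\,\llangle[x],[y_j]\rrangle\in\operatorname{span}\mathcal{M}.
\]
Both $p$ and $p_\delta$ are positively homogeneous and Lipschitz. For $p$, Lemma~\ref{lem.differentiation under the integral sign} gives $\|\nabla p\|\le\|q\|_{L^1}$ on the full-measure set of points with trivial stabilizer, so Proposition~\ref{prop.rademacher} applies. It therefore suffices to show that $\operatorname{ess\,sup}_x\|\nabla(p-p_\delta)(x)\|\to0$ as $\delta\to0$.

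By homogeneity, gradients are constant along rays, so we restrict to $x\in S^{d-1}$ with trivial stabilizer. Write $u_x(y):=\argmax_{u\in[y]}\langle x,u\rangle$, which is defined for almost every $y$. Lemma~\ref{lem.differentiation under the integral sign} and Proposition~\ref{prop.grad max filter} give, for almost every such $x$,
\[
\nabla(p-p_\delta)(x)=\sum_j\int_{A_j}\big(q(y)u_x(y)-q(y_j)u_x(y_j)\big)\,d\omega(y).
\]
Split the integrand as $(q(y)-q(y_j))u_x(y)+q(y_j)(u_x(y)-u_x(y_j))$. The first term is bounded by $\beta(q)\delta$ pointwise, since $\|u_x(y)\|=1$. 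This contributes at most $\beta(q)\delta\,\omega(S^{d-1})$, uniformly in $x$.

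The second term is the main obstacle. The map $y\mapsto u_x(y)$ is piecewise: on the region $C_g(x):=\{y:\ gy=u_x(y)\}=g^{-1}\{z:\langle x,z\rangle>\langle x,hz\rangle\ \forall h\neq g\}$ it equals $gy$, and that region is an open polyhedral cone intersected with the sphere. So $\|u_x(y)-u_x(y_j)\|\le\delta$ whenever $y$ and $y_j$ lie in the same region. Otherwise $\|u_x(y)-u_x(y_j)\|\le2$. The latter can only happen when $A_j$ meets a boundary hyperplane $\{z:\langle g^{-1}x-h^{-1}x,z\rangle=0\}$. The total measure of the pieces $A_j$ meeting these boundaries is bounded by the measure of the $\delta$-neighborhood of $|G|^2$ great subspheres. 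That measure is $O(|G|^2\delta)$ with a constant independent of $x$, because the $\delta$-neighborhood of any great subsphere has measure at most $c_d\delta$, regardless of its normal vector.

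Consequently, the second term is at most $\|q\|_\infty\big(\delta\,\omega(S^{d-1})+2c_d|G|^2\delta\big)$ uniformly in $x$. Here $\|q\|_\infty<\infty$ because $q$ is Lipschitz on a compact set. Combining the two bounds gives $\|p-p_\delta\|_{\operatorname{Lip}}=O(\delta)$, and hence $p\in\overline{\operatorname{span}\mathcal{M}}$.

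The uniformity in $x$ is the delicate point. The boundary hyperplanes move with $x$, and we must ensure no degeneracy arises. The uniform estimate on neighborhoods of great subspheres handles this, because degenerate normals with $g^{-1}x=h^{-1}x$ do not occur at trivial-stabilizer $x$.
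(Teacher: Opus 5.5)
Your proposal is correct and is essentially the paper's proof: Riemann sums over a partition of $S^{d-1}$ into pieces of diameter at most $\delta$, then Rademacher's theorem plus differentiation under the integral sign to reduce to a gradient estimate at trivially stabilized $x$, then a good/bad split in which the good part is controlled by the Lipschitz constant of $q$ and the bad part by the $O(\delta)$ measure of $\delta$-neighborhoods of the at most $|G|^2$ Voronoi hyperplanes of $[x]$, uniformly in $x$. One small slip: you did not require the pieces $A_j$ to be connected, so your claim that $y$ and $y_j$ can lie in different regions only when $A_j$ meets a boundary hyperplane is false as stated, but it is repaired either by taking connected pieces, as the paper does, or by noting that the segment from $y$ to $y_j$ must meet a boundary hyperplane, which puts $y$ itself within $\delta$ of that hyperplane and gives the same measure bound.
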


\begin{proof}
Let $\epsilon>0$ be arbitrary, and consider a partition $S^{d-1}=\bigsqcup_{k=1}^n I_k$ such that each $I_k$ is measurable and connected with chordal diameter at most $\epsilon$.
For each $k$, take a sample $y_k\in I_k$.
For each $x\in \mathbb{R}^d$, we approximate the integral combination $p(x)$ with the Riemann sum
\[
\hat{p}(x)
:=\sum_{k=1}^n q(y_k)\,\llangle[x],[y_k]\rrangle\,|I_k|,
\]
where we abbreviate $|\cdot|:=\omega(\cdot)$.
In particular, $\hat{p}\colon\mathbb{R}^d\to\mathbb{R}$ resides in $\operatorname{span}\mathcal{M}$.
It suffices to show $\|\hat{p}-p\|_{\operatorname{Lip}}=O(\epsilon)$.

Note that $p$ is Lipschitz since $q$ is continuous (and hence absolutely integrable) and $x\mapsto\llangle[x],[y]\rrangle$ is $1$-Lipschitz for every unit vector $y$:
\[
|p(x)-p(x')|
\leq \int_{S^{d-1}}|q(y)|\,|\llangle[x],[y]\rrangle-\llangle[x'],[y]\rrangle|\,d\omega(y)
\leq\|q\|_{L^1(S^{d-1})}\cdot\|x-x'\|.
\]
Meanwhile, $\hat{p}$ is Lipschitz on account of being a finite linear combination of (Lipschitz) max filters $x\mapsto \llangle[x],[y_k]\rrangle$.
Then Proposition~\ref{prop.rademacher} gives that $p$ and $\hat{p}$ are differentiable almost everywhere, and furthermore,
\begin{equation}
\label{eq.rademacher}
\|\hat{p}-p\|_{\operatorname{Lip}}
=\esssup_{x\in\mathbb{R}^d}\|\nabla \hat{p}(x)-\nabla p(x)\|.
\end{equation}

To provide a set on which to estimate the $\esssup$ above, consider the subset $S \subseteq \mathbb{R}^d$ consisting of all points at which $\llangle [\cdot],[y_k]\rrangle$ is differentiable for every $k \in [n]$, and that are furthermore trivially stabilized in $G$. 
We claim $S$ contains almost every point in $\mathbb{R}^d$.
Indeed, $\llangle [\cdot],[y_k]\rrangle$ is nondifferentiable at $z\in\mathbb{R}^d$ only if there exist $u\neq u'$ in $[y_k]$ for which $\langle z,u\rangle=\langle z,u'\rangle$ (see Proposition~\ref{prop.grad max filter}), in which case $z$ resides in the orthogonal complement of $u-u'$.
Thus, the points of non-differentiability for $\llangle [\cdot],[y_k]\rrangle$ lie in the union of finitely many hyperplanes, which has measure zero.
Also, the points that are stabilized by a non-identity element $g\in G$ reside in the proper subspace $\ker(g- I)$, which has measure zero.
Since $G$ is finite by assumption, it follows that almost every point in $\mathbb{R}^d$ is trivially stabilized in $G$.
This proves the claim.

Now select any $x \in S$.
Then denoting 
\[
v_x(y)
:=q(y)\argmax_{u\in [y]}\langle x,u\rangle,
\]
Lemma~\ref{lem.differentiation under the integral sign} above and Proposition~\ref{prop.grad max filter} respectively give that $p$ and $\hat{p}$ are both differentiable at $x$, with
\[
\nabla p(x)
=\int_{S^{d-1}}v_x(y)\,d\omega(y),
\qquad
\nabla \hat{p}(x)
:=\sum_{k=1}^n v_x(y_k)\,|I_k|.
\]
In particular,
\begin{equation}
\label{eq.bound of norm of diff of grads}
\|\nabla\hat{p}(x)-\nabla p(x)\|
=\bigg\|\sum_{k=1}^n \int_{I_k} \big( v_x(y)-v_x(y_k)\big)\,d\omega(y)\bigg\|
\leq\sum_{k=1}^n \int_{I_k} \big\| v_x(y)-v_x(y_k)\big\|\,d\omega(y).
\end{equation}

We will split the terms of \eqref{eq.bound of norm of diff of grads} into two types and bound them with different strategies.
Specifically, let $A\subseteq[n]$ denote the set of indices $k\in[n]$ for which $I_k$ resides in an open Voronoi cell in $\mathbb{R}^d$ relative to the orbit $[x]$, and let $B:=[n]\setminus A$ denote the remaining indices.
As we will see, the $\sum_{k\in A}$ portion of \eqref{eq.bound of norm of diff of grads} is small since $q$ is Lipschitz and each $I_k$ has small diameter, while the $\sum_{k\in B}$ portion is small since $q$ is bounded and $\bigcup_{k\in B}I_k$ has small measure.

Suppose $k\in A$, and let $V$ denote the open Voronoi cell in $\mathbb{R}^d$ relative to the orbit $[x]$ that contains $I_k$.
There is a group element $g\in G$ such that for every $y\in I_k\subseteq V$, it holds that $gy$ resides in the Voronoi cell containing $x$.
As such, $v_x(y)=q(y)gy$, and so
\begin{align*}
\|v_x(y)-v_x(y_k)\|
&=\|q(y)gy-q(y_k)gy_k\|\\
&\leq\|q(y)gy-q(y_k)gy\|+\|q(y_k)gy-q(y_k)gy_k\|\\
&\leq |q(y)-q(y_k)|+|q(y_k)|\|y-y_k\|\\
&\leq \|q\|_{\operatorname{Lip}}\cdot\epsilon+\|q\|_{\infty}\cdot\epsilon.
\end{align*}
It follows that
\[
\sum_{k\in A} \int_{I_k} \big\| v_x(y)-v_x(y_k)\big\|\,d\omega(y)
\leq\sum_{k\in A} |I_k|\cdot\big(\|q\|_{\operatorname{Lip}}+\|q\|_{\infty}\big)\cdot\epsilon
\leq\omega_{d-1}\cdot \big(\|q\|_{\operatorname{Lip}}+\|q\|_{\infty}\big)\cdot\epsilon,
\]
where $\omega_{d-1}$ denotes the surface area of $S^{d-1}$.

For the $k\in B$ case, note that
\[
\sum_{k\in B} \int_{I_k} \big\| v_x(y)-v_x(y_k)\big\|\,d\omega(y)
\leq \bigg|\bigcup_{k\in B}I_k\bigg|\cdot\sup_{\substack{k\in [n]\\y\in S^{d-1}}}\|v_x(y)-v_x(y_k)\|
\leq \bigg|\bigcup_{k\in B}I_k\bigg|\cdot2\|q\|_\infty,
\]
where the last step follows from the triangle inequality:
for $k\in [n]$ and $y\in S^{d-1}$, it holds that
\[
\|v_x(y)-v_x(y_k)\|
\leq\|v_x(y)\|+\|v_x(y_k)\|
\leq2\|q\|_\infty.
\]
It remains to bound the measure of $\bigcup_{k\in B}I_k$, which we accomplish by bounding the measure of a convenient superset.

To this end, consider any $k\in B$.
Since $I_k$ is connected by assumption, it nontrivially intersects one of the linear hyperplanes $\{H_j\}_{j=1}^m$ that together contain the facets of the Voronoi cells relative to $[x]$.
(Notably, $m\leq\binom{|G|}{2}$, regardless of $x$.)
In particular, since $I_k$ has diameter at most $\epsilon$ by assumption, it resides in an $\epsilon$-thickened version of some $H_j$.
Meanwhile, the spherical measure of this thickened version of $H_j$ can be obtained by first integrating out the ``lines of latitude'' that are parallel to $H_j$ by way of a coarea formula:
\[
\big|(H_j+B_0(\epsilon))\cap S^{d-1}\big|
=\omega_{d-2}\int_{-\epsilon}^\epsilon (1-t^2)^{\frac{d-3}{2}}\,dt
\leq 2\epsilon\omega_{d-2},
\]
where $\omega_{d-2}>0$ quantifies the contribution of the equator $S^{d-2} \cong H_j\cap S^{d-1}\subseteq S^{d-1}$; in particular, it does not depend on $j$. 
It follows that
\[
\bigg|\bigcup_{k\in B}I_k\bigg|
\leq\sum_{j=1}^m\big|(H_j+B_0(\epsilon))\cap S^{d-1}\big|
\leq\binom{|G|}{2}\cdot2\epsilon\omega_{d-2}.
\]

Putting everything together, we have
\begin{align*}
\|\nabla\hat{p}(x)-\nabla p(x)\|
&\leq\sum_{k\in A} \int_{I_k} \big\| v_x(y)-v_x(y_k)\big\|\,d\omega(y)+\sum_{k\in B} \int_{I_k} \big\| v_x(y)-v_x(y_k)\big\|\,d\omega(y)\\
&\leq \Big(|S^{d-1}|\cdot\big(\|q\|_{\operatorname{Lip}}+\|q\|_{\infty}\big)+\tbinom{|G|}{2}\cdot 2\omega_{d-2}\cdot 2\|q\|_\infty\Big)\cdot\epsilon.
\end{align*}
Since $x\in S$ was arbitrary, it follows from \eqref{eq.rademacher} that $\|\hat{p}-p\|_{\operatorname{Lip}}=O(\epsilon)$, as desired.
\end{proof}

\subsection{Application 1: Planar rotations}
\label{subsec:planar}

In this subsection, we establish~\eqref{eq.important containment} in the special case where $h_1,\ldots,h_n$ are given by Example~\ref{ex.optimal coordinates}(a).
By Theorem~\ref{thm.integral combos are in lipschitz closure of linear of max filter}, it suffices to express each $h_i$ as a suitably nice integral combination of max filters.
This follows from the following (after accounting for some subtleties, which we do later) since each $h_i$ is a $G$-invariant polynomial when restricted to $S^1$:

\begin{theorem}
\label{thm.integral combo for rotations of the plane}
Take a nontrivial finite subgroup $G\leq \operatorname{SO}(2)$.
For every $G$-invariant polynomial $p$, there is a $G$-invariant polynomial $q$ such that
\[
p(x)
=\int_{S^1} q(y) \, \llangle [x],[y] \rrangle \, d\omega(y)
\]
for all $x\in S^1$.
\end{theorem}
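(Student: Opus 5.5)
The plan is to turn the integral into a convolution on the circle and invert it frequency by frequency using Fourier series. Identify $S^1$ with $\mathbb{R}/2\pi\mathbb{Z}$ via $x=e^{\mathrm{i}\theta}$ and $y=e^{\mathrm{i}\varphi}$, and let $G$ be generated by rotation by $2\pi/r$, with $r\geq 2$. Then
\[
\llangle[x],[y]\rrangle=\max_{k}\cos\!\big(\theta-\varphi-\tfrac{2\pi k}{r}\big)=K(\theta-\varphi),
\]
where $K$ is the $\tfrac{2\pi}{r}$-periodic function that agrees with $\cos t$ on $[-\tfrac{\pi}{r},\tfrac{\pi}{r}]$. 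This holds because the nearest orbit point maximizes the cosine. Hence the right-hand side of the theorem is the circular convolution $(q*K)(\theta)$, up to the normalization of $\omega$.

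Next, describe the $G$-invariant polynomials on $S^1$. Restricted to the circle, polynomials are exactly the trigonometric polynomials. Invariance under rotation by $2\pi/r$ forces the Fourier support to lie in $r\mathbb{Z}$. So $p(\theta)=\sum_{|m|\leq M}\hat p(rm)e^{\mathrm{i}rm\theta}$ with $\hat p(-rm)=\overline{\hat p(rm)}$. Conversely, every such real trigonometric polynomial is the restriction of a $G$-invariant polynomial, since $\operatorname{Re}$ and $\operatorname{Im}$ of $(x_1+\mathrm{i}x_2)^{rm}$ are invariant polynomials. The convolution theorem gives $\widehat{q*K}(n)=2\pi\,\hat q(n)\hat K(n)$ (adjusting constants to $\omega$). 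Also, $\hat K$ is supported on $r\mathbb{Z}$ because $K$ is $2\pi/r$-periodic.

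The key step, which is the only real obstacle, is to show that $\hat K(rm)\neq 0$ for every $m\in\mathbb{Z}$. Using periodicity and evenness,
\[
\hat K(rm)=\frac{r}{2\pi}\int_{-\pi/r}^{\pi/r}\cos t\cos(rmt)\,dt .
\]
The product-to-sum formula together with $\sin(m\pi\pm a)=\pm(-1)^m\sin a$ gives
\[
\int_{-\pi/r}^{\pi/r}\cos t\cos(rmt)\,dt=\frac{\sin((rm-1)\pi/r)}{rm-1}+\frac{\sin((rm+1)\pi/r)}{rm+1}=(-1)^{m+1}\sin(\tfrac{\pi}{r})\,\frac{2}{r^2m^2-1}.
\]
This is nonzero because $r\geq2$ makes $\sin(\pi/r)\neq0$ and $rm\neq\pm1$. (For $m=0$ it equals $2\sin(\pi/r)>0$.) It is also real and even in $m$.

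Finally, define $q$ by $\hat q(rm):=\hat p(rm)/(2\pi\hat K(rm))$ for $|m|\leq M$, and set all other coefficients to zero. Then $q$ is a trigonometric polynomial with frequencies in $r\mathbb{Z}$. It is real-valued, because $\hat K$ is real and even, so the conjugate symmetry of $\hat p$ passes to $\hat q$. Hence $q$ is a $G$-invariant polynomial on $S^1$. Moreover, $q*K$ and $p$ have identical Fourier coefficients, so they agree on $S^1$, which proves the theorem. The argument is simply deconvolution by a kernel whose Fourier coefficients never vanish on $r\mathbb{Z}$.
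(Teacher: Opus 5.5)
Your proposal is correct and follows essentially the same route as the paper: you write the max filter as a $\frac{2\pi}{r}$-periodic kernel in $\theta-\varphi$, deconvolve in the Fourier domain, and use the product-to-sum computation to get $\hat f(rm)=(-1)^{m+1}\frac{\frac{r}{\pi}\sin(\pi/r)}{(rm)^2-1}\neq 0$. Your remarks on why $q$ is real-valued and is the restriction of a $G$-invariant polynomial fill in details that the paper leaves implicit.
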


\begin{proof}
Without loss of generality, we take $\omega$ to be the uniform probability measure on $S^1$.
Writing $r:=|G|>1$ and $x=(\cos\theta,\sin\theta)$, then $\llangle [x],[e_1] \rrangle$ is represented by the function $f\colon\mathbb{R}\to\mathbb{R}$ defined by $f(\theta)=\cos\theta$ for $\theta\in[-\frac{\pi}{r},\frac{\pi}{r}]$ and extended $\frac{2\pi}{r}$-periodically.
Notably, if we put $y=(\cos\varphi,\sin\varphi)$, then $\llangle [x],[y]\rrangle=f(\theta-\varphi)$.
Next, $g(\theta)=p(\cos\theta,\sin\theta)$ defines a $\frac{2\pi}{r}$-periodic trigonometric polynomial $g\colon\mathbb{R}\to\mathbb{R}$, and we seek a $\frac{2\pi}{r}$-periodic trigonometric polynomial $c\colon\mathbb{R}\to\mathbb{R}$ such that
\[
g(\theta)
=\frac{1}{2\pi}\int_0^{2\pi} c(\varphi)\,f(\theta-\varphi)\,d\varphi
\qquad
\forall\,\theta\in\mathbb{R}.
\]
Our task simplifies in the Fourier domain:
\[
\hat{g}(k)
=\hat{c}(k)\cdot\hat{f}(k)
\qquad
\forall\,k\in\mathbb{Z}.
\]
This suggests the following choice for $c$:
\begin{equation}
\label{eq.fourier series of coeff function}
\hat{c}(k)
=\left\{\begin{array}{cl}
\displaystyle\frac{\hat{g}(k)}{\hat{f}(k)}&\text{if }\hat{f}(k)\neq 0\\[12 pt]
0&\text{else.}
\end{array}\right.
\end{equation}
This choice works since $\hat{f}(k)\neq 0$ if and only if $r$ divides $k$.
Indeed, the ``only if'' direction follows from the fact that $f$ is $\frac{2\pi}{r}$-periodic, while the ``if'' direction follows from computing the Fourier series (with a bit of grueling integration and the aid of a product-to-sum formula):
\[
\hat{f}(rm)
=(-1)^{m+1} \cdot \frac{\frac{r}{\pi}\sin(\frac{\pi}{r})}{(rm)^2-1}
\qquad
\forall\,m\in\mathbb{Z}.
\qedhere
\]
\end{proof}

To illustrate Theorem~\ref{thm.integral combo for rotations of the plane}, we apply it to express the optimal coordinate functions $h_1$, $h_2$, and $h_3$ in the simplest case, where $G$ has order $2$.
(Historically, this example marked the beginning of this paper; with the help of a computer experiment, we managed to approximate the optimal coordinate functions as linear combinations of max filters, and the mysterious integral identities that fell out compelled us to investigate further.)

\begin{example}
In the special case where $|G|=2$, i.e., $G=\{\pm I\}$, a minimum-distortion Euclidean embedding of $\mathbb{R}^2/G$ is given by the homogeneous extension of
\[
x
\mapsto xx^\top
=
\left[\begin{array}{cc}
x_1^2 & x_1x_2 \\
x_1x_2 & x_2^2
\end{array}\right]
=\left[\begin{array}{cc}
\cos^2\theta & \cos\theta\sin\theta \\
\cos\theta\sin\theta & \sin^2\theta
\end{array}\right]
\]
for $x=\left[\begin{smallmatrix}\cos\theta\\\sin\theta\end{smallmatrix}\right]$ with $\theta\in[0,2\pi)$.
(See Example~\ref{ex.optimal coordinates}(b), and note that the embedding in this $d=2$ case can be viewed as isometrically equivalent to the embedding in Example~\ref{ex.optimal coordinates}(a).)
We will express these coordinate functions as integral combinations of max filters.
In this case, the max filter of $\left[\begin{smallmatrix}\cos\theta\\\sin\theta\end{smallmatrix}\right]$ against the template $\left[\begin{smallmatrix}\cos\varphi\\\sin\varphi\end{smallmatrix}\right]$ is given by
\[
f(\theta-\varphi)
:=|\cos(\theta-\varphi)|.
\]
This matches the notation in the proof of Theorem~\ref{thm.integral combo for rotations of the plane} above since $f\colon\mathbb{R}\to\mathbb{R}$ is $\pi$-periodic with $f(\theta)=\cos\theta$ for $\theta\in[-\frac{\pi}{2},\frac{\pi}{2}]$.
We follow the recipe~\eqref{eq.fourier series of coeff function} to derive a coefficient function (namely, $c$ in the proof) for each coordinate function ($g$ in the proof).
First, we appeal to standard trigonometric identities to obtain the Fourier series
\begin{align*}
\cos^2\theta
&=\tfrac{1}{2}+\tfrac{1}{4}e^{i2\theta}+\tfrac{1}{4}e^{-i2\theta}, \\
\sin^2\theta
&=\tfrac{1}{2}-\tfrac{1}{4}e^{i2\theta}-\tfrac{1}{4}e^{-i2\theta}, \\
\cos\theta\sin\theta
&=\phantom{\tfrac{1}{2}}-\tfrac{i}{4}e^{i2\theta}+\tfrac{i}{4}e^{-i2\theta}.
\end{align*}
Meanwhile, the last display of the proof of Theorem~\ref{thm.integral combo for rotations of the plane} gives the Fourier series of $f$:
\[
f(\theta)
=\frac{2}{\pi}\sum_{m=-\infty}^\infty\frac{(-1)^{m}}{1-4m^2}\cdot e^{i2m\theta}.
\]
Letting $c$ denote the coefficient function for $\cos^2\theta$, we apply equation~\eqref{eq.fourier series of coeff function} to get the Fourier series of $c$:
\[
c(\theta)
= \sum_{k=-\infty}^\infty \hat{c}(k)\cdot e^{ik\theta}
= \tfrac{\pi}{4}+\tfrac{3\pi}{8}e^{i2\theta}+\tfrac{3\pi}{8}e^{-i2\theta}
= \tfrac{\pi}{4}+\tfrac{3\pi}{4}\cos2\theta,
\]
which gives the (nontrivial!)\ identity
\[
\cos^2\theta
=\frac{1}{2\pi}\int_0^{2\pi}c(\varphi)f(\theta-\varphi)d\varphi
=\int_0^{\pi}\Big(\tfrac{1}{4}+\tfrac{3}{4}\cos2\varphi\Big)~|\cos(\theta-\varphi)|~d\varphi.
\]
Similar calculations give expressions for the other coordinate functions:
\begin{align*}
\sin^2\theta
&=\int_0^{\pi}\Big(\tfrac{1}{4}-\tfrac{3}{4}\cos2\varphi\Big)~|\cos(\theta-\varphi)|~d\varphi,\\
\cos\theta\sin\theta
&=\int_0^{\pi}\tfrac{3}{4}\sin2\varphi~|\cos(\theta-\varphi)|~d\varphi.
\end{align*}
\end{example}

\subsection{Application 2: Real phase retrieval}
\label{subsec:real phase retrieval}

Next, we establish~\eqref{eq.important containment} in the special case where $h_1,\ldots,h_n$ are given by Example~\ref{ex.optimal coordinates}(b).
Again, by Theorem~\ref{thm.integral combos are in lipschitz closure of linear of max filter}, it suffices to express each $h_i$ as a suitably nice integral combination of max filters.
Since $G=\{\pm I\}$, the max filter $\llangle [x],[y]\rrangle$ is simply $|\langle x,y\rangle|$.
As before, each $h_i$ is a $G$-invariant (i.e., even) polynomial when restricted to the sphere.
What follows is the main result of this subsection:

\begin{theorem}
\label{thm.integral of max filters for pmI invariant polynomials}
Take $d>2$ and $G=\{\pm I\}\leq \operatorname{O}(d)$.
For every $G$-invariant (i.e., even) polynomial $p$, there is a $G$-invariant polynomial $q$ such that
\[
p(x)
=\int_{S^{d-1}} q(y) \, |\langle x,y\rangle| \, d\omega(y)
\]
for all $x\in S^{d-1}$.
\end{theorem}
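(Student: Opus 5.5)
The natural approach is the Funk--Hecke theorem: on the sphere, the operator
\[
T q(x) := \int_{S^{d-1}} q(y)\,|\langle x,y\rangle|\,d\omega(y)
\]
is a zonal convolution, so it acts diagonally on spherical harmonics. Concretely, if $Y_k$ is a spherical harmonic of degree $k$ on $S^{d-1}$, then $TY_k=\lambda_k Y_k$, where
\[
\lambda_k=\omega_{d-2}\int_{-1}^1 |t|\,P_k^{(d)}(t)\,(1-t^2)^{\frac{d-3}{2}}\,dt,
\]
and $P_k^{(d)}$ is the Gegenbauer (Legendre) polynomial of degree $k$ in dimension $d$, normalized so that $P_k^{(d)}(1)=1$. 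Since $|t|$ is even, $\lambda_k=0$ for odd $k$. So the plan mirrors the Fourier argument in the proof of Theorem~\ref{thm.integral combo for rotations of the plane}: diagonalize, divide by $\lambda_k$, and check that the needed $\lambda_k$ are nonzero.

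The steps are as follows.

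\textbf{Step 1: decompose $p$.} Restrict $p$ to $S^{d-1}$. A polynomial of degree at most $N$ restricted to the sphere is a finite sum of spherical harmonics of degree at most $N$. The map $x\mapsto -x$ acts on $\mathcal{H}_k$ by $(-1)^k$, so the evenness of $p$ forces $p|_{S^{d-1}}=\sum_{k\text{ even},\,k\le N}p_k$ with $p_k\in\mathcal{H}_k$.

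\textbf{Step 2: invert $T$ termwise.} Define
\[
q:=\sum_{k\text{ even}}\lambda_k^{-1}p_k .
\]
This is a finite sum of even harmonics, hence the restriction of an even (i.e.\ $G$-invariant) polynomial. Funk--Hecke then gives $Tq=p$ on $S^{d-1}$.

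\textbf{Step 3: show $\lambda_{2m}\neq0$ for every $m\ge0$.} This is the main obstacle. I would first try the Rodrigues formula
\[
P_k^{(d)}(t)(1-t^2)^{\frac{d-3}{2}}=c_{k,d}\,\frac{d^k}{dt^k}(1-t^2)^{k+\frac{d-3}{2}},
\qquad c_{k,d}\neq 0.
\]
Write $\int_{-1}^1|t|\,(\cdots)\,dt=2\int_0^1 t\,(\cdots)\,dt$ and integrate by parts on $[0,1]$. The boundary terms at $t=1$ vanish because of the high power of $(1-t^2)$. After the first integration by parts, $t$ is differentiated once and then disappears; what survives is essentially the single boundary term at $t=0$,
\[
\frac{d^{k-2}}{dt^{k-2}}(1-t^2)^{k+\frac{d-3}{2}}\Big|_{t=0}.
\]
Expanding $(1-t^2)^{\alpha}$ as a binomial series, this quantity is a multiple of the coefficient of $t^{k-2}$, namely $\binom{\alpha}{(k-2)/2}(-1)^{(k-2)/2}$ with $\alpha=k+\frac{d-3}{2}$. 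It is nonzero because $\alpha>\frac{k-2}{2}$, so no factor in the generalized binomial coefficient vanishes. The cases $k=0$ (where $\lambda_0>0$ trivially) and $k=2$ are checked directly.

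As a cross-check, or as an alternative route, one can use the known closed form. The Funk--Hecke coefficients of $|t|$ are given by ratios of Gamma functions with alternating sign $(-1)^{m+1}$ for $k=2m$, in analogy with $\hat f(rm)$ in Theorem~\ref{thm.integral combo for rotations of the plane}; the signs are visibly nonzero. I expect the hypothesis $d>2$ is used only so that the weight $(1-t^2)^{\frac{d-3}{2}}$ and the harmonic theory take their standard form; the $d=2$ case was already handled by Theorem~\ref{thm.integral combo for rotations of the plane}.
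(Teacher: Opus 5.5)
Your proposal is correct and follows essentially the same route as the paper. You decompose $p$ into even-degree spherical harmonics, use that $|\langle x,\cdot\rangle|$ acts on $\mathcal{H}_k$ by the scalar $\omega_{d-2}\int_{-1}^1|t|\,G_k(t)(1-t^2)^{\frac{d-3}{2}}\,dt$, and show this is nonzero for even $k$ via the Rodrigues formula, integration by parts, and the nonvanishing of the surviving boundary term at $t=0$. The one difference is that you cite Funk--Hecke directly, whereas the paper derives the eigenvalue relation from the $\operatorname{SO}(d)$-equivariance of the projections $\pi_k$, the uniqueness of zonal harmonics (Stein--Weiss, Corollary~2.13), and the coarea formula.
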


Our proof of Theorem~\ref{thm.integral of max filters for pmI invariant polynomials} follows the same idea as our proof of Theorem~\ref{thm.integral combo for rotations of the plane}, but the correct notion of Fourier series in this case is given by the theory of \textit{spherical harmonics};
we recommend Chapter~IV, Section~2 of Stein and Weiss~\cite{SteinW:71} as a reference.
Throughout, we assume $d>2$ in order to avoid the qualitatively different sphere $S^1$, which in turn is covered by Theorem~\ref{thm.integral combo for rotations of the plane}.
Recall the decomposition
\[
L^2(S^{d-1})
=\bigoplus_{k=0}^\infty\mathcal{H}_k,
\]
where $\mathcal{H}_k$ denotes the subspace of homogeneous harmonic polynomial functions of degree $k$ over $S^{d-1}$.
Since $\mathcal{H}_k$ is finite dimensional, for each $x\in S^{d-1}$, the evaluation map $\mathcal{H}_k\to\mathbb{C}$ defined by $p\mapsto p(x)$ can be represented as an inner product against some $Z_x^{(k)}\in\mathcal{H}_k$:
\begin{equation}
\label{eq.reproducing property}
p(x)
=\langle p,Z_x^{(k)}\rangle_{L^2(S^{d-1})}
\end{equation}
This representer function $Z_x^{(k)}$ is known as the \textit{zonal spherical harmonic of degree $k$ with pole $x$}.
It turns out that $Z_x^{(k)}$ is real valued, and furthermore, $Z_{gx}^{(k)}(gy)=Z_{x}^{(k)}(y)$ for every $g\in \operatorname{SO}(d)$.
Up to scaling, $x\mapsto Z_x^{(k)}$ is the unique function $S^{d-1}\to\mathcal{H}_k$ that exhibits this invariance to $\operatorname{SO}(d)$:

\begin{proposition}[Corollary~2.13 of {\cite[Chapter~IV]{SteinW:71}}]
\label{prop.zonal symmetry}
Fix $d>2$ and $k \geq 0$, and consider any $F\colon S^{d-1}\to\mathcal{H}_k$ such that
\[
F(gx)(gy)=F(x)(y)
\qquad
\forall\, x,y\in S^{d-1},~g\in \operatorname{SO}(d).
\]
Then there exists $c\in\mathbb{C}$ such that $F(x)=cZ_x^{(k)}$ for every $x\in S^{d-1}$.
\end{proposition}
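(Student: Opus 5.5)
The plan is to reduce the statement to a fact about a single pole, using transitivity of $\operatorname{SO}(d)$ on $S^{d-1}$. Fix the pole $e:=e_d$ and let $K\cong\operatorname{SO}(d-1)$ be its stabilizer in $\operatorname{SO}(d)$. For $g\in K$ and $y\in S^{d-1}$, the hypothesis gives
\[
F(e)(gy)=F(ge)(gy)=F(e)(y),
\]
so $F(e)\in\mathcal{H}_k$ is $K$-invariant. The same computation applied to $Z^{(k)}_\cdot$, using the stated invariance $Z^{(k)}_{gx}(gy)=Z^{(k)}_x(y)$, shows that $Z^{(k)}_e$ is $K$-invariant. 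Moreover $Z^{(k)}_e\neq 0$, since the reproducing property~\eqref{eq.reproducing property} gives
\[
Z^{(k)}_e(e)=\|Z^{(k)}_e\|^2_{L^2(S^{d-1})}>0,
\]
because $\mathcal{H}_k\neq\{0\}$.

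The core claim is that the $K$-invariant subspace of $\mathcal{H}_k$ is one-dimensional. Granting this, $F(e)=cZ^{(k)}_e$ for some $c\in\mathbb{C}$. Now, for arbitrary $x\in S^{d-1}$, choose $g\in\operatorname{SO}(d)$ with $ge=x$; this is possible since $\operatorname{SO}(d)$ acts transitively on the sphere. Then
\[
F(x)(y)=F(ge)(y)=F(e)(g^{-1}y)=cZ^{(k)}_e(g^{-1}y)=cZ^{(k)}_{ge}(y)=cZ^{(k)}_x(y),
\]
which is the statement.

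To prove the core claim, I would exploit $d>2$. In that case $K$ acts transitively on each ``latitude'' sphere $\{y\in S^{d-1}: y_d=t\}$, because these are spheres $S^{d-2}$ with $d-2\geq1$. Hence a $K$-invariant homogeneous polynomial $P$ of degree $k$ can be written as
\[
P(y)=\sum_{j\ \mathrm{even}} c_j\, y_d^{\,k-j}\,|y'|^{j},
\qquad y'=(y_1,\ldots,y_{d-1}).
\]
To justify this form, expand $P=\sum_j y_d^{k-j}a_j(y')$. Each homogeneous piece $a_j$ must then be $\operatorname{SO}(d-1)$-invariant, hence radial. Being a polynomial, it equals $c_j|y'|^j$ with $j$ even, and $c_j=0$ for odd $j$.

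Next, I would impose $\Delta P=0$. Using
\[
\Delta'\,|y'|^{j}=j(j+d-3)\,|y'|^{j-2},
\]
the coefficient of $y_d^{k-j}|y'|^{j-2}$ in $\Delta P$ gives a recursion of the form
\[
(k-j+2)(k-j+1)\,c_{j-2}+j(j+d-3)\,c_j=0.
\]
Since $j(j+d-3)\neq0$ for $j\geq2$ when $d\geq3$, every $c_j$ is determined by $c_0$. Thus the space of $K$-invariant elements of $\mathcal{H}_k$ has dimension at most $1$. It is exactly $1$ because it contains $Z^{(k)}_e\neq0$.

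The main obstacle is this dimension count. In particular, I must check that the restriction map from harmonic homogeneous polynomials to $S^{d-1}$ is injective, so that the invariance of the function on the sphere transfers to the polynomial. This holds because a homogeneous polynomial is determined by its values on the sphere, and by homogeneity its invariance on the sphere extends to all of $\mathbb{R}^d$. I must also check that the recursion's denominators never vanish, which is exactly where $d>2$ is used. The remaining steps are formal.
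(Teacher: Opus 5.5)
Your argument is correct. The paper gives no proof of its own and simply cites Stein--Weiss, and your proof is essentially the standard argument behind the cited corollary: reduce to invariance of $F(e)$ under the stabilizer $K\cong\operatorname{SO}(d-1)$ of a single pole, show via the expansion $\sum_j c_j y_d^{k-j}|y'|^j$ and the Laplacian recursion that the $K$-invariant part of $\mathcal{H}_k$ is at most one-dimensional, and then transport by transitivity.

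Two small corrections. First, the factor $j(j+d-3)$ is already nonzero for $j\geq 2$ when $d=2$, so the recursion is not where $d>2$ is used. The hypothesis is used only in the radiality step: for $d=2$ and $k\geq1$ the stabilizer is trivial and $\dim\mathcal{H}_k=2$. Second, $Z^{(k)}_e\neq 0$ does not follow from $\mathcal{H}_k\neq\{0\}$ alone. You should justify it by the rotation invariance of $\mathcal{H}_k$, or by an explicit element such as $\operatorname{Re}(y_d+\mathrm{i}y_1)^k$, which equals $1$ at $e$.
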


\begin{proof}[Proof of Theorem~\ref{thm.integral of max filters for pmI invariant polynomials}]
First, note that for each nonnegative integer $k$, $\mathcal{H}_k$ is an $\operatorname{SO}(d)$-invariant subspace of $L^2(S^{d-1})$, and so the orthogonal projection map $\pi_k\colon L^2(S^{d-1})\to\mathcal{H}_k$ is $\operatorname{SO}(d)$-equivariant.
As such, for each $x\in S^{d-1}$, the function $F_k(x):=\pi_k\big(|\langle x,\cdot\rangle|\big) \in \mathcal{H}_k$ satisfies
\[
F_k(gx)(gy)
=\pi_k\big(|\langle gx,\cdot\rangle|\big)(gy)
=\big(g^{-1}\cdot\pi_k\big(|\langle gx,\cdot\rangle|\big)\big)(y)
=\pi_k\big(|\langle gx,g\,\cdot\rangle|\big)(y)
=\pi_k\big(|\langle x,\cdot\rangle|\big)(y)
=F_k(x)(y)
\]
for every $y\in S^{d-1}$ and $g\in \operatorname{SO}(d)$.
By Proposition~\ref{prop.zonal symmetry}, there exists a sequence $\{c_k\}_{k=0}^\infty$ such that, for every $x \in S^{d-1}$,
\[
|\langle x,\cdot\rangle|
=\sum_{k=0}^\infty\pi_k\big(|\langle x,\cdot\rangle|\big)
=\sum_{k=0}^\infty F_k(x)
=\sum_{k=0}^\infty c_kZ_x^{(k)},
\]
with convergence in $L^2(S^{d-1})$.
Furthermore, each $c_k$ is real since the function $|\langle x,\cdot\rangle|$ and the mutually orthogonal functions $Z_x^{(k)}$ are all real valued.

Next, given an even polynomial function $p$, we may decompose $p=\sum_k p_k$, where each $p_k$ resides in $\mathcal{H}_k$.
This sum is finite since it is an equality of polynomial functions.
Furthermore, since $p$ is even, it follows that $p_k$ is zero whenever $k$ is odd.
As such, we may assume $p\in\mathcal{H}_{k}$ with $k=2m$ without loss of generality.
Then for every $x \in S^{d-1}$, it holds that
\[
\int_{S^{d-1}} p(y)\,|\langle x,y\rangle|\,d\omega(y)
=\big\langle p,|\langle x,\cdot\rangle|\big\rangle_{L^2(S^{d-1})}
=\langle p,c_kZ_x^{(k)}\rangle_{L^2(S^{d-1})}
=c_k\langle p,Z_x^{(k)}\rangle_{L^2(S^{d-1})}
=c_kp(x),
\]
where the last step uses the reproducing property~\eqref{eq.reproducing property}.
As such, we may take $q:=p/c_k$, provided we show that $c_k\neq0$.
This is the focus of the remainder of the proof.

Fix some $x \in S^{d-1}$.
Since $c_kZ_x^{(k)}$ is the projection of $|\langle x,\cdot\rangle|$ onto $\mathcal{H}_k$, it is also the projection of $|\langle x,\cdot\rangle|$ onto the span of $Z_x^{(k)}$, and so
\[
c_k
=\frac{\big\langle |\langle x,\cdot\rangle|,Z_x^{(k)}\big\rangle_{L^2(S^{d-1})}}{\|Z_x^{(k)}\|_{L^2(S^{d-1})}^2}
=\Big\langle |\langle x,\cdot\rangle|,\tfrac{1}{Z_x^{(k)}(x)}Z_x^{(k)}\Big\rangle_{L^2(S^{d-1})},
\]
where the last step follows from the reproducing property \eqref{eq.reproducing property}, namely, $\|Z_x^{(k)}\|_{L^2(S^{d-1})}^2=Z_x^{(k)}(x)$.
Since $|\langle x,\cdot\rangle|$ and $\tfrac{1}{Z_x^{(k)}(x)}Z_x^{(k)}$ are both constant along the ``lines of latitude'' orthogonal to the pole $x$, an application of the \textit{smooth coarea formula} reduces the desired inner product to an integral over the $x$-component $t\in[-1,1]$ of $y\in S^{d-1}$:
\[
c_k
=\omega_{d-2}\int_{-1}^1 |t|\, G_k(t)\, (1-t^2)^{\frac{d-3}{2}}\,dt,
\]
where $\omega_{d-2}>0$ denotes the appropriate measure of contribution by the equator $S^{d-2}$, and $G_k$ denotes the \textit{Gegenbauer polynomial} of degree~$k$; see Theorem~2.14 in~\cite[Chapter~IV]{SteinW:71}.
The sequence $\{G_j\}_{j=0}^\infty$ is uniquely determined by iteratively taking $G_j$ to be any real-valued polynomial of degree $j$ that is orthogonal to $\{G_i\}_{i<j}$ over the measure $(1-t^2)^{\frac{d-3}{2}}dt$ on $[-1,1]$, and then normalizing so that $G_j(1)=1$.

We conclude by using the fact that $k=2m$ is even.
Since $G_{2m}$ is an even function, we may write
\[
c_{2m}
=\omega_{d-2}\int_0^1 G_{2m}(t)\, (1-t^2)^{\frac{d-3}{2}}\,2t\,dt.
\]
The $m=0$ case is easy: $G_0(t)= 1$, and so $c_0>0$.
Now suppose $m\geq1$.
Setting $\alpha:=\frac{d-2}{2}$, we apply the \textit{Rodrigues formula}:
\[
G_{2m}(t)\, (1-t^2)^{\alpha-\frac{1}{2}}
=\frac{\Gamma(\alpha+\frac{1}{2})}{4^m\Gamma(\alpha+2m+\frac{1}{2})}\cdot F^{(2m)}(t),
\qquad
F(t):=(1-t^2)^{2m+\alpha-\frac{1}{2}}.
\]
Then $c_{2m}$ is a positive multiple of the following integral, which succumbs to integration by parts:
\[
\int_0^1 t\,F^{(2m)}(t)\,dt
=F^{(2m-1)}(1)-F^{(2m-2)}(1)+F^{(2m-2)}(0).
\]
Since $d\geq 3$, the $k$th derivative of $F(t)$ contains a positive power of $1-t^2$ for every $k\leq 2m-1$.
In particular, $F^{(2m-1)}(1)=F^{(2m-2)}(1)=0$.
Next, defining $f(u):=(1-u)^{2m+\alpha-\frac{1}{2}}$, then
\[
F'(t)
=2t\,f'(t^2),
\qquad
F''(t)
=(2t)^2\,f''(t^2)+2f'(t^2).
\]
Continuing inductively, we then get
\[
F^{(2k)}(0)
=\frac{(2k)!}{k!}f^{(k)}(0)
\]
for every $k\leq m-1$.
Putting $k=m-1$ and $s:=2m+\alpha-\frac{1}{2}$, then $c_{2m}$ is a positive multiple of
\[
f^{(m-1)}(0)
=(-1)^{m-1}s(s-1)\cdots(s-m+2)
\neq0.
\]
That is, $c_{2m}\neq0$.
\end{proof}

\section{Containment of function spaces and proof of Theorem~\ref{thm.main result ii}}
\label{sec.proof of containments}

The purpose of this section is to prove Theorem~\ref{thm.main result ii}.
We prove each part of this result in a different subsection below.
One of the features we use to prove proper containment of function spaces is differentiability.
This is made possible thanks to the following (standard) result concerning the space $\operatorname{Lip}_0(\mathbb{R}^d)$ of Lipschitz functions $\mathbb{R}^d\to\mathbb{R}$ that vanish at the origin:

\begin{proposition}
\label{prop.C^1 is Lipschitz closed}
Suppose $\mathcal{F}\subseteq\operatorname{Lip}_{0}(\mathbb{R}^d)$ consists of functions that are continuously differentiable over an open set $U\subseteq\mathbb{R}^d$.
Then every member of $\overline{\mathcal{F}}$ is also continuously differentiable over $U$.
\end{proposition}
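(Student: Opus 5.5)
The plan is to show that Lipschitz convergence forces locally uniform convergence of the gradients on $U$. Then continuous differentiability passes to the limit by the classical theorem on uniform limits of derivatives.

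Fix $f\in\overline{\mathcal{F}}$ and a sequence $f_n\in\mathcal{F}$ with $\|f_n-f\|_{\operatorname{Lip}}\to0$. Since every function in $\operatorname{Lip}_0(\mathbb{R}^d)$ vanishes at the origin, we have
\[
|f_n(x)-f(x)|=|(f_n-f)(x)-(f_n-f)(0)|\leq\|f_n-f\|_{\operatorname{Lip}}\,\|x\|,
\]
so $f_n\to f$ uniformly on bounded sets, and in particular pointwise.

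Next I show that $\{\nabla f_n\}$ is uniformly Cauchy on $U$. For $m,n$, the function $f_n-f_m$ is $C^1$ on $U$. At any $x\in U$, its directional derivative along a unit vector $v$ is a limit of difference quotients, each bounded in absolute value by $\|f_n-f_m\|_{\operatorname{Lip}}$. Taking $v$ in the direction of the gradient gives
\[
\sup_{x\in U}\|\nabla f_n(x)-\nabla f_m(x)\|\leq\|f_n-f_m\|_{\operatorname{Lip}}.
\]
This uses only pointwise differentiability on $U$, so there is no almost-everywhere subtlety from Proposition~\ref{prop.rademacher}. Hence $\nabla f_n$ converges uniformly on $U$ to some function $F\colon U\to\mathbb{R}^d$, and $F$ is continuous as a uniform limit of continuous functions.

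Finally I identify $F$ with $\nabla f$. Fix $x\in U$ and a ball $B\subseteq U$ around $x$. For $x+h\in B$, the fundamental theorem of calculus along the segment gives
\[
f_n(x+h)-f_n(x)=\int_0^1\langle\nabla f_n(x+th),h\rangle\,dt.
\]
Passing to the limit, using pointwise convergence on the left and uniform convergence under the integral on the right, yields
\[
f(x+h)-f(x)=\int_0^1\langle F(x+th),h\rangle\,dt.
\]
Continuity of $F$ at $x$ then gives $f(x+h)-f(x)-\langle F(x),h\rangle=o(\|h\|)$. Thus $f$ is differentiable at $x$ with $\nabla f(x)=F(x)$, and $f$ is continuously differentiable on $U$.

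The only step needing care is the gradient bound by the Lipschitz seminorm. It holds at every point of $U$, so I expect no real obstacle; the argument is standard.
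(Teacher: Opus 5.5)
Your proof is correct and follows the same route as the paper's sketch. Both use the bound $\sup_{x\in U}\|\nabla f_m(x)-\nabla f_n(x)\|\leq\|f_m-f_n\|_{\operatorname{Lip}}$, which you obtain by the direct difference-quotient argument the paper offers as an alternative to Rademacher; both then pass to a uniform, hence continuous, limit of the gradients and identify that limit with $\nabla f$. The only difference is that you carry out this last step yourself, via pointwise convergence from $f(0)=0$ and the fundamental theorem of calculus along segments, whereas the paper leaves it as a standard exercise citing Rudin's Theorem~7.17.
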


\begin{proof}[Proof sketch]
Fix $f\in\overline{\mathcal{F}}$.
Then there exists a sequence $\{f_n\}_{n=1}^\infty$ in $\mathcal{F}$ such that $\|f_n-f\|_{\operatorname{Lip}}\to0$.
In particular, $\{f_n\}_{n=1}^\infty$ is Cauchy in $\operatorname{Lip}_{0}(\mathbb{R}^d)$.
Consider the inequality
\[
\sup_{x\in U}\|\nabla f_m(x)-\nabla f_n(x)\|
\leq \|f_m-f_n\|_{\operatorname{Lip}}
\qquad
\forall\,m,n\in\mathbb{N},
\]
which can be obtained either as a consequence of Proposition~\ref{prop.rademacher}, or by a direct argument involving the mean value theorem.
This inequality implies that $\{\nabla f_n|_U\}_{n=1}^\infty$ is Cauchy in the space of uniformly bounded functions $U\to\mathbb{R}^d$ with uniform norm, meaning $\{\nabla f_n|_U\}_{n=1}^\infty$ converges uniformly to some $g\colon U\to\mathbb{R}^d$.
Since the uniform limit of continuous functions is continuous, it holds that $g$ is continuous.
It remains to show that $f$ is differentiable at each $x\in U$ with $\nabla f(x)=g(x)$, which amounts to a standard exercise (see, for example, Theorem~7.17 in~\cite{Rudin:76} for a related argument).
\end{proof}

Next, we derive important structural properties of the space $\operatorname{PL}_{\operatorname{h}}$ of positively homogeneous piecewise linear functions $\mathbb{R}^d\to\mathbb{R}$.
(These properties are essentially known in the literature, but we reproduce them here for convenience.)
For every $f\in\operatorname{PL}_{\operatorname{h}}$, there is a finite \textit{tessellation} $\mathcal{T}$ of $\mathbb{R}^d$ (meaning $\mathcal{T}$ consists of subsets of $\mathbb{R}^d$ that cover $\mathbb{R}^d$ and have pairwise disjoint interiors, and each member of $\mathcal{T}$ equals the closure of its interior) such that the level sets of $\nabla f$ are precisely the interiors of the members of $\mathcal{T}$.
Since $f$ is positively homogeneous, each $C\in\mathcal{T}$ is closed under positive scalar multiplication, and furthermore, for each $C\in\mathcal{T}$, there exists $v_C\in\mathbb{R}^d$ (namely, the constant gradient of $f$ over $\operatorname{int} C$) such that $f(x)=\langle x,v_C\rangle$ for all $x\in\operatorname{int} C$.
Finally, since $f$ is continuous, the tuple $(v_C)_{C\in\mathcal{T}}\in(\mathbb{R}^d)^{\mathcal{T}}$ that determines $f$ necessarily satisfies the linear constraints
\[
\langle x,v_C\rangle=\langle x,v_{C'}\rangle
\qquad
\forall\, C,C'\in\mathcal{T},\,C\neq C',\,x\in C\cap C'.
\]
Given $C,C'\in\mathcal{T}$ with $C\neq C'$, it follows from the above constraints that $C\cap C'$ is contained in the hyperplane $\operatorname{span}\{v_C-v_{C'}\}^\perp$.
Since the sets in $\mathcal{T}$ are closed and cover $\mathbb{R}^d$ with disjoint interiors, it follows that every boundary point of every $C\in\mathcal{T}$ necessarily resides in some other $C'\in\mathcal{T}$.
Thus,
\[
\bigcup_{C\in\mathcal{T}}\partial C
=\bigcup_{\substack{C,C'\in\mathcal{T}\\C\neq C'}}C\cap C'
\subseteq \bigcup_{\substack{C,C'\in\mathcal{T}\\C\neq C'}}\operatorname{span}\{v_C-v_{C'}\}^\perp.
\]
In particular, the set complement of the union of hyperplanes on the right-hand side is a disjoint union of open chambers, and $f$ has constant gradient over each of these chambers.

Given a finite set $\mathcal{H}$ of linear hyperplanes, let $\operatorname{PL}_{\operatorname{h}}(\mathcal{H})$ denote the subspace of functions in $\operatorname{PL}_{\operatorname{h}}$ that have constant gradient over each open chamber induced by $\mathcal{H}$.
Then by the above reduction,
\begin{equation}
\label{eq.stratification of PL}
\operatorname{PL}_{\operatorname{h}}
=\bigcup_\mathcal{H}\operatorname{PL}_{\operatorname{h}}(\mathcal{H}),
\end{equation}
where the union is over all finite sets of linear hyperplanes.
Interestingly, these subspaces form a lattice since $\operatorname{PL}_{\operatorname{h}}(\mathcal{H})\leq\operatorname{PL}_{\operatorname{h}}(\mathcal{H}')$ is equivalent to $\mathcal{H}\subseteq\mathcal{H}'$.
If we let $\mathcal{T}(\mathcal{H})$ denote the tessellation of closed chambers induced by $\mathcal{H}$, then $\operatorname{PL}_{\operatorname{h}}(\mathcal{H})$ is finite dimensional on account of the injective linear map $\operatorname{grad}\colon\operatorname{PL}_{\operatorname{h}}(\mathcal{H})\to(\mathbb{R}^d)^{\mathcal{T}(\mathcal{H})}$ defined by $\operatorname{grad}f(C)=\nabla f(x)$ for any $x\in\operatorname{int}C$.
Define a graph with vertex set $\mathcal{T}(\mathcal{H})$ and adjacency $C\leftrightarrow C'$ whenever $C\cap C'$ has co-dimension $1$, and put
\[
\kappa(f)
:=\max_{C\leftrightarrow C'}\|\operatorname{grad}f(C)-\operatorname{grad}f(C')\|
\qquad
\forall \, f \in \operatorname{PL}_{\operatorname{h}}(\mathcal{H}).
\]
Since the graph is connected, it follows that $\kappa$ vanishes precisely on the subspace $(\mathbb{R}^d)^*\leq\operatorname{PL}_{\operatorname{h}}(\mathcal{H})$ of linear functionals and defines a norm on the quotient space $\operatorname{PL}_{\operatorname{h}}(\mathcal{H})/(\mathbb{R}^d)^*$.
In some sense, $\kappa(f)$ quantifies second derivative information about $f$.
Furthermore, the quantity $\kappa(f)$ is independent of the choice of $\mathcal{H}$ since viewing $f$ as a member of $\operatorname{PL}_{\operatorname{h}}(\mathcal{H}')$ for some $\mathcal{H}'\supseteq\mathcal{H}$ results in the same value of~$\kappa(f)$.
Having established this preliminary material, we now proceed to the proof of Theorem~\ref{thm.main result ii}.

\subsection{Proof of Theorem~\ref{thm.main result ii}(a): Planar rotations}

For this part of Theorem~\ref{thm.main result ii}, it suffices to assemble the theory that we developed in the previous sections.

\begin{proof}[Proof of Theorem~\ref{thm.main result ii}(a)]
Recall that $h\colon\mathbb{R}^2\to\mathbb{R}\times\mathbb{C}\cong\mathbb{R}^3$ is defined by $h(0)=0$ and 
\[
h(x)
=\|x\|\cdot\big(\,
\cos\tfrac{\pi}{2r},\,\,
(\tfrac{x_1+\mathrm{i}x_2}{\|x\|})^r \sin\tfrac{\pi}{2r}\,\big)
\]
for $x\neq0$.
Then for $x \in S^{1}$,
\[
h_1(x)
=\cos\tfrac{\pi}{2r},
\qquad
h_2(x)
=\operatorname{Re}[(x_1+\mathrm{i}x_2)^r ]\cdot\sin\tfrac{\pi}{2r},
\qquad
h_3(x)
=\operatorname{Im}[(x_1+\mathrm{i}x_2)^r ]\cdot\sin\tfrac{\pi}{2r}.
\]
Notably, these coincide with $G$-invariant polynomial functions $S^{1}\to\mathbb{R}$ of degree $0$, $r$, and $r$, respectively.
Thus, for each $i$, we have $h_i=\operatorname{ext}(h_i|_{S^{1}})\in\operatorname{ext}\mathcal{P}^G$, and so
\[
\operatorname{span}\{h_i\}_{i=1}^n
\leq \overline{\operatorname{ext}\mathcal{P}^G}.
\]
Furthermore, the containment is strict since one space is finite-dimensional and the other is not.

Next, consider any $p\in\operatorname{ext}\mathcal{P}^G$.
Then by Theorem~\ref{thm.integral combo for rotations of the plane}, there exists a polynomial $q$ such that 
\[
p(x)
=\int_{S^1} q(y) \, \llangle [x],[y] \rrangle \, d\omega(y)
\]
for all $x\in S^1$, and so Theorem~\ref{thm.integral combos are in lipschitz closure of linear of max filter} gives $p\in\overline{\operatorname{span}\mathcal{M}}$.
It follows that
\[
\overline{\operatorname{ext}\mathcal{P}^G}
\leq\overline{\operatorname{span}\mathcal{M}}.
\]
Furthermore, the containment is strict since every member of the left-hand side is continuously differentiable over $\mathbb{R}^2\setminus\{0\}$ by Proposition~\ref{prop.C^1 is Lipschitz closed}, while a nontrivial max filter is not (see Proposition~\ref{prop.grad max filter}).

Finally, every max filter is $G$-invariant, positively homogeneous, and piecewise linear.
Since these properties are closed under linear combinations, it follows that
\[
\operatorname{span}\mathcal{M}
\leq \operatorname{PL}_{\operatorname{h}}^G.
\]
To prove equality, we show that any $f\in\operatorname{PL}_{\operatorname{h}}^G$ can be expressed as a linear combination of max filters.
Fix a $G$-invariant collection $\mathcal{H}$ of hyperplanes (i.e., lines in this case) with $|\mathcal{H}|\geq2$ such that $f\in\operatorname{PL}_{\operatorname{h}}(\mathcal{H})^G$.
Then it suffices to show that $\operatorname{PL}_{\operatorname{h}}(\mathcal{H})^G$ is spanned by max filters.
To this end, let $U\subseteq S^1$ denote the set of all unit vectors contained in these hyperplanes.
We claim that each function in $\operatorname{PL}_{\operatorname{h}}(\mathcal{H})$ arises uniquely from a function $U \to \mathbb{R}$ by conical extension.
Indeed, if we consider the points from $U$ in counterclockwise order, the function is linear on the conic hull of adjacent points.
(We can take conic hulls here since $\mathcal{H}$ has size at least~$2$, and so each sector has angle strictly less than~$\pi$.)
Then the $G$-invariant members of $\operatorname{PL}_{\operatorname{h}}(\mathcal{H})$ similarly correspond to $G$-invariant functions $U \to \mathbb{R}$, and so $\operatorname{PL}_{\operatorname{h}}(\mathcal{H})^G$ has dimension $n:=|U|/|G|$.
Next, pick a set $S$ of $G$-orbit representatives of~$U$.
Then $|S|=|U|/|G|=n$.
Rotate each $s\in S$ counterclockwise by $\pi/|G|$ radians to get the template $z_s\in\mathbb{R}^2$.
Then the max filters $\{\llangle[\cdot],[z_s]\rrangle\}_{s\in S}$ reside in $\operatorname{PL}_{\operatorname{h}}(\mathcal{H})^G$.
Furthermore, since these functions have pairwise disjoint singularity sets (other than the origin), they are linearly independent.
As such, they span an $n$-dimensional subspace of $\operatorname{PL}_{\operatorname{h}}(\mathcal{H})^G$, i.e., all of $\operatorname{PL}_{\operatorname{h}}(\mathcal{H})^G$.
\end{proof}

\subsection{Proof of Theorem~\ref{thm.main result ii}(b): Real phase retrieval}

For this part of Theorem~\ref{thm.main result ii}, the material from the previous sections implies everything except that the final containment $\overline{\operatorname{span}\mathcal{M}}
\lneq \overline{\operatorname{PL}_{\operatorname{h}}^G}$ is proper.
In this section, we build up the theory necessary to prove this last piece before writing out the proof of Theorem~\ref{thm.main result ii}(b).

We are chiefly interested in the $G$-invariant functions in $\operatorname{PL}_{\operatorname{h}}$, namely, $\operatorname{PL}_{\operatorname{h}}^G$, and for our purposes, the most important feature of this function space is captured by the following lemma:

\begin{lemma}
\label{lem.ksl}
Take any $d>2$ and put $G := \{ \pm  I \} \leq \operatorname{O}(d)$.
Given $f\in\operatorname{PL}_{\operatorname{h}}^G$, denote
\[
K(f)
:=\{\,x \in \mathbb{R}^d : \text{$f$ is not differentiable at $x$}\,\}.
\]
If $K(f)$ is not a union of finitely many hyperplanes, then $f \notin \overline{\operatorname{span}\mathcal{M}}$.
\end{lemma}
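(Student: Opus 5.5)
The plan is to find a quantity that is constant along each hyperplane for every element of $\operatorname{span}\mathcal{M}$, to show that this constancy survives passage to the Lipschitz closure, and then to show that any $f\in\operatorname{PL}_{\operatorname{h}}^G$ violating the hypothesis breaks it. For $G=\{\pm I\}$ each max filter is $x\mapsto|\langle x,y\rangle|$. Its gradient is $\pm y$ on the two sides of $y^\perp$, so its gradient jumps by exactly $2y$ across the \emph{entire} hyperplane $y^\perp$. Hence any $g=\sum_j c_j|\langle\cdot,y_j\rangle|$ has the following property. For each hyperplane $H$, the jump $[\nabla g]_H(x):=\lim_{s\downarrow0}\big(\nabla g(x+su)-\nabla g(x-su)\big)$, with $u$ the unit normal of $H$, is the same vector $2\sum_{j:\,y_j^\perp=H}c_jy_j$ at every point $x\in H$ off the other hyperplanes $y_j^\perp$. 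Equivalently, the distributional Hessian $D^2g=\sum_j 2c_j\,y_jy_j^\top\,\sigma_{y_j^\perp}$ is a finite sum of hyperplane measures whose densities are constant along each hyperplane.

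Next, fix $f\in\operatorname{PL}_{\operatorname{h}}^G$ with $K(f)$ not a finite union of hyperplanes. By the structure theory preceding the lemma, $K(f)$ is a finite union of $(d-1)$-dimensional polyhedral cones, each lying in a hyperplane spanned by $v_C-v_{C'}$. If every such hyperplane $H$ were entirely contained in $K(f)$, then $K(f)$ would be a finite union of hyperplanes. So some hyperplane $H$ contains relative-interior points $a$ and $b$ with the following properties: near $a$, $f$ is linear on each side of $H$ with nonzero jump $[\nabla f]_H(a)\neq0$; near $b$, $f$ is linear, so $[\nabla f]_H(b)=0$; and both points avoid all the other hyperplanes of the arrangement of $f$. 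This is possible since $K(f)\cap H$ is a proper polyhedral subset of $H$ with nonempty relative interior. Here $d>2$ matters: $H$ has dimension at least $2$, so $a$ and $b$ can be connected inside $H$ avoiding the lower-dimensional junk, unlike the planar case where Theorem~\ref{thm.main result ii}(a) shows the lemma fails.

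The main obstacle is the transfer to the closure. Suppose $g_n\in\operatorname{span}\mathcal{M}$ with $\|g_n-f\|_{\operatorname{Lip}}\to0$. By Proposition~\ref{prop.rademacher}, $\nabla g_n\to\nabla f$ in $L^\infty$. I want a functional $\Lambda$, continuous in $\|\cdot\|_{\operatorname{Lip}}$, such that:
\begin{itemize}
\item[(i)] $\Lambda$ vanishes on $\operatorname{span}\mathcal{M}$;
\item[(ii)] $\Lambda(f)$ compares $[\nabla f]_H(a)$ with $[\nabla f]_H(b)$, and is therefore nonzero.
\end{itemize}
My first attempt is
\[
\Lambda_\delta(g)=\frac{1}{\delta}\int_0^\delta\!\!\int_{B}\Big(\big[\nabla g(a+z+su)-\nabla g(a+z-su)\big]-\big[\nabla g(b+z+su)-\nabla g(b+z-su)\big]\Big)\,dz\,ds,
\]
where $B$ is a small $(d-1)$-ball in $H$. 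This satisfies $|\Lambda_\delta(g)|\le 4|B|\,\|g\|_{\operatorname{Lip}}$, and for small $\delta$ it equals $|B|\,[\nabla f]_H(a)\neq0$ at $f$.

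The difficulty is (i). For a hyperplane $y^\perp\neq H$ that passes near $a$ or $b$, the contribution of $|\langle\cdot,y\rangle|$ is $O(\delta)$ only for fixed $y$, and the combinations $g_n$ may involve unboundedly many hyperplanes. To handle this, I would replace the box averages by their limits as $\delta\to0$, i.e., by the singular part of $D^2g$ on $H$ localized to $B$. I would then argue that $u^\top D^2g_n$ restricted to thin slabs around $H$ converges to the corresponding restriction of $D^2f$. This uses the $L^\infty$ convergence of $\nabla g_n$ tested against $\partial_u$ of slab cutoffs whose derivatives are uniformly controlled, so that the contributions of the transversal hyperplanes $y_j^\perp\neq H$ are absorbed. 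If this step is too delicate, the fallback is to pick $a$ and $b$ at generic positions and average $\Lambda$ over translates within $H$ (and over dilations, using homogeneity). Transversal hyperplane measures then contribute equally at $a$ and $b$ in the average, while the $H$-part is constant along $H$ for $g_n$ and not constant for $f$. Either route yields $0=\Lambda(g_n)\to\Lambda(f)\neq0$, a contradiction, so $f\notin\overline{\operatorname{span}\mathcal{M}}$.
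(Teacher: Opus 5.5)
Your strategy is sound, but the step you call the main obstacle is never actually closed, and the mechanisms you propose for it are not the right ones. Passing to $\lim_{\delta\to0}\Lambda_\delta$ is the correct move. You then try to justify $\Lambda(g_n)\to\Lambda(f)$ by convergence of slab-restricted Hessians $u^\top D^2g_n$, with the transversal hyperplanes ``absorbed''. That does not follow from $\nabla g_n\to\nabla f$ in $L^\infty$, which gives no control on the total variation of $D^2(g_n-f)$ in a slab. Nor does anything in your argument make the contributions of the hyperplanes $y_j^\perp\neq H$ to $\Lambda_\delta(g_n)$ small uniformly in $n$. The averaging fallback rests on the unsupported claim that transversal contributions agree at $a$ and $b$. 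As written, $f\notin\overline{\operatorname{span}\mathcal{M}}$ is therefore not yet proved.

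The missing observation is already in your text. The bound $|\Lambda_\delta(g)|\le 4|B|\,\|g\|_{\operatorname{Lip}}$ is uniform in $\delta$, so you should let $\delta\to0$ with $g$ fixed rather than $n\to\infty$ with $\delta$ fixed. No uniformity in $n$ is then needed.

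Fix $g\in\operatorname{span}\mathcal{M}$, and take the ball $B$ so small that $a+B$ and $b+B$ stay a positive distance from the other hyperplanes of $f$. Then $\Lambda_\delta(f)=|B|\,[\nabla f]_H(a)$ for small $\delta$. Meanwhile $\Lambda_\delta(g)=O(\delta)$, for two reasons:
\begin{itemize}
\item the max filters of $g$ normal to $H$ have identical gradients at $a+z\pm su$ and $b+z\pm su$, so they cancel exactly;
\item each of the finitely many transversal ones contributes $O(\delta)$, with a constant depending on $g$, which is harmless.
\end{itemize}
Hence
\[
|B|\,\big\|[\nabla f]_H(a)\big\|-\|\Lambda_\delta(g)\|\le\|\Lambda_\delta(f-g)\|\le 4|B|\,\|f-g\|_{\operatorname{Lip}}.
\]
Letting $\delta\to0$ gives $\|f-g\|_{\operatorname{Lip}}\ge\frac14\|[\nabla f]_H(a)\|$ for every $g\in\operatorname{span}\mathcal{M}$. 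You can even drop the averaging: pick $a'\in a+B$ and $b'\in b+B$ off the singular hyperplanes of $g$, and compare $\nabla(f-g)$ at the four points $a'\pm su$, $b'\pm su$ for small $s>0$.

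With this repair your route is correct and differs from the paper's. The paper proceeds as follows:
\begin{itemize}
\item it lets $T$ be the span of the max filters normal to the hyperplanes of $f$;
\item it uses the intermediate claim $K(g)=\bigcup_i y_i^\perp$ to see that $f+(\mathbb{R}^d)^*$ misses $T$;
\item it obtains $c=\inf_{g\in T}\kappa(g-f)>0$ from finite-dimensionality;
\item it compares gradients at two points on either side of a single facet, chosen so that the transversal part of the given combination has equal gradients there.
\end{itemize}
Your comparison of the jump across $H$ at two places along $H$ cancels every max filter normal to $H$ outright. You therefore need neither the intermediate claim nor the compactness step, and you get an explicit constant instead of $c/2$. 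Both arguments rest on the same fact: a single max filter has a gradient jump across a given hyperplane either along all of it or only on a null subset of it.

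Two minor points:
\begin{itemize}
\item The jump of $|\langle\cdot,y\rangle|$ across $y^\perp$ is $2|\langle u,y\rangle|\,u$, not $2y$.
\item The lemma does not fail for $d=2$; its hypothesis is vacuous there, since an even member of $\operatorname{PL}_{\operatorname{h}}$ on $\mathbb{R}^2$ is singular exactly along a union of lines. Your remark that $d>2$ is needed to connect $a$ and $b$ inside $H$ is also beside the point, since the argument never uses such a connection.
\end{itemize}
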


\begin{figure}[t]
  \centering
\begin{tikzpicture}[scale=0.9,
  y={(4.6cm,-0.2cm)},             
  z={(0cm,4.6cm)},             
  x={(-0.8cm,-1.2cm)},          
  line cap=round, line join=round
]

\def\samp{120} 

\begin{scope}
  \colorlet{patchlight}{white}
  \colorlet{patchdark}{black!18}

  \clip
    plot[domain=0:90,samples=\samp,variable=\t] ({cos(\t)},{sin(\t)},0) -- 
    plot[domain=0:90,samples=\samp,variable=\t] (0,{cos(\t)},{sin(\t)}) -- 
    plot[domain=90:0,samples=\samp,variable=\t] ({cos(\t)},0,{sin(\t)}) -- 
    cycle;

  \shade[ball color=patchdark, opacity=0.5] (-1,0) circle [radius=3.2];

\end{scope}

\coordinate (X) at (1,0,0);  
\coordinate (Y) at (0,1,0);  
\coordinate (Z) at (0,0,1);  
\coordinate (O) at (0,0,0);  

\draw[dotted] (O) -- (X);
\draw[dotted] (O) -- (Y);
\draw[dotted] (O) -- (Z);

\coordinate (Mxy) at ({1/sqrt(2)},{1/sqrt(2)},0);
\coordinate (Mxz) at ({1/sqrt(2)},0,{1/sqrt(2)});
\coordinate (Myz) at (0,{1/sqrt(2)},{1/sqrt(2)});

\coordinate (D) at ({1/sqrt(3)},{1/sqrt(3)},{1/sqrt(3)});

\draw[very thick]
  plot[domain=0:90,samples=\samp,variable=\t]
    ({cos(\t)},{sin(\t)},0);          

\draw[very thick]
  plot[domain=0:90,samples=\samp,variable=\t]
    (0,{cos(\t)},{sin(\t)});           

\draw[very thick]
  plot[domain=0:90,samples=\samp,variable=\t]
    ({cos(\t)},0,{sin(\t)});           

\draw[thick]
  plot[domain=0:54.7356103,samples=\samp,variable=\t]
    ({sin(\t)/sqrt(2)},{sin(\t)/sqrt(2)},{cos(\t)});

\draw[thick]
  plot[domain=0:54.7356103,samples=\samp,variable=\t]
    ({sin(\t)/sqrt(2)},{cos(\t)},{sin(\t)/sqrt(2)});

\draw[thick]
  plot[domain=0:54.7356103,samples=\samp,variable=\t]
    ({cos(\t)},{sin(\t)/sqrt(2)},{sin(\t)/sqrt(2)});

\fill (D) circle[radius=1.1pt];

\node[right]      at (Y) {$x_2$}; 
\node[below left]      at (X) {$x_1$}; 
\node[above] at (Z) {$x_3$}; 

\node at ({1/sqrt(14)},{2.5/sqrt(14)},{2.5/sqrt(14)}) {$x_1$};

\node at ({2.5/sqrt(14)},{1/sqrt(14)},{2.5/sqrt(14)}) {$x_2$};

\node at ({2.5/sqrt(14)},{2.5/sqrt(14)},{1/sqrt(14)}) {$x_3$};
\end{tikzpicture}
\caption{Illustration of Example~\ref{ex.ksl} in the special case where $d=3$. Here, we display the value of $f(x)$ when $x$ is in the first octant of the unit sphere. For $x$ residing in most other octants, we have $f(x)=0$, though the function values in the octant opposite the first are determined by the fact that $f$ is even. The set of $x\in\mathbb{R}^3$ at which $f(x)$ is not differentiable consists of all scalar multiples of the points along the solid black curves.}
\label{fig:PL_example}
\end{figure}
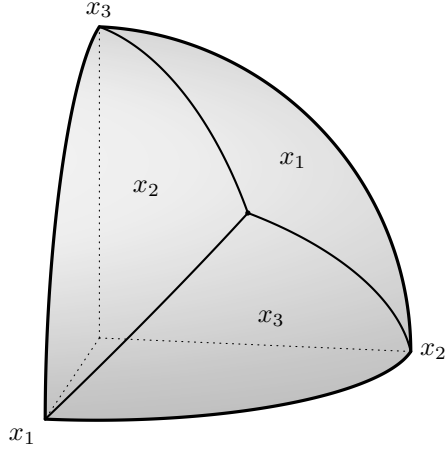

\begin{example}
\label{ex.ksl}
Take any $d>2$ and put $G := \{ \pm  I \} \leq \operatorname{O}(d)$.
Consider $f\in\operatorname{PL}_{\operatorname{h}}^G$ defined by
\[
f(x)
=\left\{\begin{array}{cl}\displaystyle\min_{i\in[d]}|x_i| &\text{if }\operatorname{sign}(x_1)=\cdots=\operatorname{sign}(x_d)\\[12 pt]
0&\text{else}.
\end{array}\right.
\]
See Figure~\ref{fig:PL_example} for an illustration.
Then $K(f)$ is a nonempty subset of the nonnegative and nonpositive orthants of $\mathbb{R}^d$, i.e., it is not a union of finitely many hyperplanes.
By Lemma~\ref{lem.ksl}, it follows that $f
\in\operatorname{PL}_{\operatorname{h}}^G\setminus\,\overline{\operatorname{span}\mathcal{M}}$.
\end{example}

\begin{proof}[Proof of Lemma~\ref{lem.ksl}]
First, we prove an intermediate claim, which is a special case of the contrapositive:
For every nonzero $f\in\operatorname{span}\mathcal{M}$, it holds that $K(f)$ is a union of finitely many hyperplanes.
Take any nonzero $f\in\operatorname{span}\mathcal{M}$, and consider any decomposition $f=\sum_{i=1}^n c_i|\langle\cdot,y_i\rangle|$ into max filters $\{|\langle\cdot,y_i\rangle|\}_{i=1}^n$ that are linearly independent as functions.
We claim that $K(f)=\bigcup_{i=1}^n\operatorname{span}\{y_i\}^\perp$.
For notational convenience, we use $U$ to denote the right-hand set.
The $\subseteq$ direction follows from the fact that any point of nondifferentiability in $f$ is a point of nondifferentiability for some max filter $|\langle\cdot,y_i\rangle|$.
For the $\supseteq$ direction, consider any $x\in U$.
Put $\delta_i:=\operatorname{sign}\langle x,y_i\rangle\in\{-1,0,1\}$.
Select a generic $v\in\mathbb{R}^d$.
Then for all $t\in\mathbb{R}$ with sufficiently small absolute value, it holds that
\begin{align*}
f(x+tv) 
= \sum_{i=1}^n c_i|\langle x+tv,y_i\rangle|
&= \sum_{\substack{i\in[n]\\[1 pt]y_i\not\perp x}} c_i|\langle x+tv,\delta_iy_i\rangle|+\sum_{\substack{i\in [n]\\[1 pt]y_i\perp x}} c_i|\langle x+tv,y_i\rangle|\\
&= \sum_{\substack{i\in[n]\\[1 pt]y_i\not\perp x}} c_i\langle x+tv,\delta_iy_i\rangle+\sum_{\substack{i\in[n]\\[1 pt]y_i\perp x}} c_i|\langle tv,y_i\rangle|\\
&=f(x)+t\sum_{\substack{i\in[n]\\[1 pt]y_i\not\perp x}} c_i\langle v,\delta_iy_i\rangle+|t|\sum_{\substack{i\in[n]\\[1 pt]y_i\perp x}} c_i|\langle v,y_i\rangle|.
\end{align*}
Since $x\in U$, there necessarily exists at least one $i$ for which $y_i\perp x$.
Furthermore, the coefficient of $|t|$ is nonzero since $v$ is generic.
It follows that $f$ is not differentiable at $x$ in the direction of $v$, and so $x\in K(f)$, as claimed.

We are now ready to prove the lemma.
Take any $f\in\operatorname{PL}_{\operatorname{h}}^G$ for which $K(f)$ is not a union of finitely many hyperplanes.
Since $f\in\operatorname{PL}_{\operatorname{h}}$, the stratification~\eqref{eq.stratification of PL} gives that there exists a set $\mathcal{H}$ of hyperplanes for which $f\in\operatorname{PL}_{\operatorname{h}}(\mathcal{H})$.
For each $H\in\mathcal{H}$, select a unit vector $z_H\in H^\perp$, and let $T\leq\operatorname{PL}_{\operatorname{h}}(\mathcal{H})$ denote the span of the resulting max filters $\{|\langle\cdot,z_H\rangle|\}_{H\in\mathcal{H}}$.
Note that for every linear functional $\ell\in(\mathbb{R}^d)^*$, the function $f+\ell\in\operatorname{PL}_{\operatorname{h}}(\mathcal{H})$ is nonzero with $K(f+\ell)=K(f)$.
Meanwhile, by our intermediate claim, every nonzero member of $T$ fails to be differentiable over an entire linear hyperplane in $\mathcal{H}$.
Since $f+(\mathbb{R}^d)^*$ avoids $T$, and since $\kappa$ defines a norm on $\operatorname{PL}_{\operatorname{h}}(\mathcal{H})/(\mathbb{R}^d)^*$, it follows that
\[
c
:=\inf_{g\in T}\kappa(g-f)
>0.
\]
Next, we show that for every linear combination $\sum_{i=1}^n c_ig_i$ of max filters $\{g_i\}_{i=1}^n$,
\begin{equation}
\label{eq.claimed bound}
\bigg\|\sum_{i=1}^n c_ig_i - f\bigg\|_{\operatorname{Lip}}
\geq\frac{c}{2},
\end{equation}
so that $f$ does not reside in the Lipschitz closure of the span of max filters.
Let $I\subseteq[n]$ denote the indices for which $g_i\in T$, and put
$g:=\sum_{i\in I} c_ig_i$
and $h:=\sum_{i\in[n]\setminus I} c_ig_i$.
Then $g-f\in\operatorname{PL}_{\operatorname{h}}(\mathcal{H})$ and $\kappa(g-f)\geq c$.
Furthermore, since $g_i\not\in T$ for every $i\in[n]\setminus I$, none of the linear hyperplanes of non-differentiability in $h$ coincide with any of the linear hyperplanes in $\mathcal{H}$.
It follows that we may select $x,y\in\mathbb{R}^d$ from the interiors of adjacent chambers in $\mathcal{T}(\mathcal{H})$ in such a way that
\[
\|\nabla (g-f)(x)-\nabla (g-f)(y)\|=\kappa(g-f)
\qquad
\text{and}
\qquad
\nabla h(x)=\nabla h(y).
\]
This then gives
\begin{align*}
c
\leq\kappa(g-f)
=\|\nabla (g-f)(x)-\nabla (g-f)(y)\|
&=\|\nabla h(x)-\nabla h(y) + \nabla (g-f)(x)-\nabla (g-f)(y)\|\\
&\leq\|\nabla(h+g-f)(x)\|+\|\nabla(h+g-f)(y)\|\\
&\leq 2\|h+g-f\|_{\operatorname{Lip}},
\end{align*}
which rearranges to the claimed bound~\eqref{eq.claimed bound}.
\end{proof}

\begin{proof}[Proof of Theorem~\ref{thm.main result ii}(b)]
Recall that $h\colon\mathbb{R}^d\to\mathbb{R}^{d\times d}_{\operatorname{sym}}\cong\mathbb{R}^{n}$ is defined in terms of the operator square root by
\[
h(x)
=\sqrt{xx^\top}.
\]
Then for every $i\in[n]$, there exist $j,k\in[d]$ such that for every $x \in S^{d-1}$,
\[
h_i(x)
=x_jx_k,
\]
which gives a $G$-invariant polynomial function $S^{d-1}\to\mathbb{R}$ of degree $2$.
It follows that
\[
\operatorname{span}\{h_i\}_{i=1}^n
\leq \overline{\operatorname{ext}\mathcal{P}^G},
\]
and the containment is strict since one space is finite-dimensional and the other is not. 

Next, every member of $\operatorname{ext}\mathcal{P}^G$ resides in $\overline{\operatorname{span}\mathcal{M}}$ by Theorems~\ref{thm.integral of max filters for pmI invariant polynomials} and~\ref{thm.integral combos are in lipschitz closure of linear of max filter}, and so
\[
\overline{\operatorname{ext}\mathcal{P}^G}
\leq\overline{\operatorname{span}\mathcal{M}}.
\]
Furthermore, the containment is strict since every member of the left-hand side is continuously differentiable over $\mathbb{R}^d\setminus\{0\}$ by Proposition~\ref{prop.C^1 is Lipschitz closed}, while a nontrivial max filter is not (see Proposition~\ref{prop.grad max filter}).

Finally, $\mathcal{M}\subseteq\operatorname{PL}_{\operatorname{h}}^G$ implies
\[
\overline{\operatorname{span}\mathcal{M}}
\leq \overline{\operatorname{PL}_{\operatorname{h}}^G},
\]
and the containment is strict as a consequence of Lemma~\ref{lem.ksl} and Example~\ref{ex.ksl}.
\end{proof}

\subsection{Proof of Theorem~\ref{thm.main result ii}(c): Reflection groups}

This final part of Theorem~\ref{thm.main result ii} is qualitatively different from the previous parts, as our analysis does not factor through the ideas developed in Section~\ref{sec.integral}.
Instead, it relies on the distinguishing characteristic that $\operatorname{span}\mathcal{M}$ has finite dimension.

\begin{proof}[Proof of Theorem~\ref{thm.main result ii}(c)]
Recall that $h\colon\mathbb{R}^d\to\mathbb{R}^d$ is defined by
\[
h(x)
\in [x]\cap W.
\]
Consider the map $L\colon\mathbb{R}^d\to\operatorname{Lip}_{\operatorname{h}}(\mathbb{R}^d)$ defined by
\[
L(y)(x)
=\langle h(x),y\rangle
\qquad
(x\in\mathbb{R}^d).
\]
Then $L$ is linear, and since $h_i=L(e_i)$ for each $i\in[d]$, it follows that $\operatorname{im}L=\operatorname{span}\{h_i\}_{i=1}^d$.
Next, for each $z\in\mathbb{R}^d$, we apply equation~(2) in~\cite{MixonP:23} to obtain
\[
\llangle[\cdot],[z]\rrangle
=\langle h(\cdot),h(z)\rangle
=L(h(z)).
\]
Since $\operatorname{im}h=W$ spans $\mathbb{R}^d$, it follows that
\[
\operatorname{span}\mathcal{M}
=\operatorname{span}\{L(h(z)):z\in\mathbb{R}^d\}
=L(\operatorname{span} W)
=\operatorname{im}L
=\operatorname{span}\{h_i\}_{i=1}^d.
\]
This subspace equals its closure in $\operatorname{Lip}_0(\mathbb{R}^d)$ since it has finite dimension.
Since every function in this subspace is $G$-invariant, positively homogeneous, and piecewise linear, we also have
\[
\overline{\operatorname{span}\mathcal{M}}
\leq\overline{\operatorname{PL}_{\operatorname{h}}^G}.
\]
This containment is proper since $\operatorname{PL}_{\operatorname{h}}^G$ has infinite dimension, which in turn can be witnessed by an infinite sequence $\{f_n\}_{n=1}^\infty$ in $\operatorname{PL}_{\operatorname{h}}^G$ such that each $f_n$ is nondifferentiable at some point $x_n$ at which $f_1,\ldots,f_{n-1}$ are differentiable.
(For example, first define $f_n|_W$ in such a way that its points of nondifferentiability reside in a new hyperplane, and then extend $G$-invariantly.)

Finally, assume no nonzero vector is fixed by all of $G$, i.e., $G$ is essential.
We argue that $\overline{\operatorname{ext}\mathcal{P}^G}\cap\overline{\operatorname{span}\mathcal{M}}=\{0\}$.
By Proposition~\ref{prop.C^1 is Lipschitz closed}, every member of $\overline{\operatorname{ext}\mathcal{P}^G}$ is continuously differentiable on $\mathbb{R}^d\setminus\{0\}$.
Meanwhile, every nonzero $f\in\overline{\operatorname{span}\mathcal{M}}$ can be viewed as the $G$-invariant extension of $\langle\cdot,y\rangle|_W$ for some nonzero $y\in\mathbb{R}^d$.
We will use this representation to show that $f$ is not differentiable at some point in $\mathbb{R}^d\setminus\{0\}$.
Since $G$ is essential as a reflection group, it follows that $W$ is an intersection of half spaces of the form $\{x\in\mathbb{R}^d:\langle x,\alpha_i\rangle\geq0\}$ for $i\in[d]$, where $\{\alpha_i\}_{i=1}^d$ forms a basis for $\mathbb{R}^d$.
Since $y$ is nonzero, there exists $i\in[d]$ such that $\langle y,\alpha_i\rangle\neq0$.
We claim that $f$ is not differentiable on the face $W\cap \alpha_i^\perp$ of $W$.
To see this, let $s_i\in G$ denote reflection about $\alpha_i^\perp$.
Then for every $x\in \operatorname{int}(W)$, we have
\[
f(s_ix)
=f(x)
=\langle x,y\rangle
=\langle s_i^2x,y\rangle
=\langle s_ix,s_iy\rangle.
\]
Since we chose $i\in[d]$ so that $\langle y,\alpha_i\rangle\neq 0$, it follows that 
\[
\nabla f(s_ix)
=s_iy 
= y-2\frac{\langle y,\alpha_i\rangle}{\|\alpha_i\|^2}\alpha_i
\neq y
=\nabla f(x).
\]
Since the gradient of $f$ is different over $\operatorname{int}(W)$ and $\operatorname{int}(s_iW)$, the claim follows.
\end{proof}

\section{Numerical results}
\label{sec.numerical}

In the previous sections, we demonstrated that, at least for certain finite subgroups $G\leq\operatorname{O}(d)$, one can obtain a nearly optimal bilipschitz embedding of the orbit space $\mathbb{R}^d/G$ into Euclidean space by post-composing an appropriate max filter bank with a linear map.
In this section, we present numerical experiments that illustrate the dramatic extent to which this phenomenon appears to generalize.
We consider multiple settings in which we have 
\begin{itemize}
\item a real Hilbert space $V$ (typically $\mathbb{R}^d$), 
\item a group $G\leq\operatorname{O}(V)$,
\item a training set $\mathcal{X}=\{x_i\}_{i=1}^N$ in $V$ from distinct $G$-orbits, and
\item a target dimension $n$,
\end{itemize}
and our goal is to find a $G$-invariant map $f\colon V\to\mathbb{R}^n$ that minimizes the \textit{empirical distortion} $\operatorname{dist}_\mathcal{X}(f^\downarrow)$, defined by
\[
\operatorname{dist}_\mathcal{X}(f^\downarrow)
:=\frac{\beta_\mathcal{X}(f^\downarrow)}{\alpha_\mathcal{X}(f^\downarrow)},
\quad
\alpha_\mathcal{X}(f^\downarrow):=\min_{\substack{i,j\in[N]\\i\neq j}}\frac{\|f(x_i)-f(x_j)\|}{d_{V/G}([x_i],[x_j])},
\quad
\beta_\mathcal{X}(f^\downarrow):=\max_{\substack{i,j\in[N]\\i\neq j}}\frac{\|f(x_i)-f(x_j)\|}{d_{V/G}([x_i],[x_j])}.
\]
There are two types of (nearly) $G$-invariant maps that we consider: \textit{trained maps} (in which the map belongs to some specified parameterized family, and the parameters are optimized for empirical distortion) and \textit{untrained maps} (in which the map is given by an explicit formula, though in some cases there is some randomness in the definition of the map).
In both cases, we report the empirical distortion $\operatorname{dist}_\mathcal{Y}(f^\downarrow)$ for some test set $\mathcal{Y}$ in $V$ that is independent of the training set $\mathcal{X}$.

We test a variety of architectures for training (nearly) $G$-invariant maps:
\begin{itemize}
\item
\textbf{Trained max filter bank (MF).}
Learn unit-norm templates of a max filter bank.
\item
\textbf{Trained linear layer $\circ$ random max filter bank (LRMF).}
Draw a random max filter bank~$\Phi$ whose templates have standard Gaussian entries, and then train a linear map $L$ so that the resulting map is $L \circ \Phi$.
\item
\textbf{Trained linear layer $\circ$ max filter bank (LMF).}
Train a max filter bank $\Phi$ simultaneously with a linear layer $L$ so that the resulting map is $L \circ \Phi$.
\item
\textbf{Trained ReLU neural network (ReLU)}.
Train two linear maps $W\colon V\to\mathbb{R}^m$ and $L\colon\mathbb{R}^m\to\mathbb{R}^n$ so that the resulting map is $L\circ\operatorname{ReLU}\circ W$.
Notably, this map is not $G$-invariant.
We train this map using the $G$-invariant augmentation $\{gx_i\}_{i\in[n],g\in G}$ of the training set $\mathcal{X}$, though when $G$ is infinite, we randomly sample $G$ in our augmentation.
\end{itemize}
We also consider the following untrained $G$-invariant maps:
\begin{itemize}
\item
\textbf{Random max filter bank (RMF).}
Draw a random max filter bank whose templates have standard Gaussian entries.
\item
\textbf{$G$-invariant polynomial (Poly).}
This is a particular $G$-invariant polynomial given by the $n$ real coordinates of the map $p$ specified in Table~\ref{table.G inv polys}.
(In the case of $\ell^2(\mathbb{Z}_d)$, we denote by $\hat{x}$ the discrete Fourier transform of $x$.)
\item
\textbf{Homogeneous extension of $G$-invariant polynomial (HPoly).}
For the corresponding $G$-invariant polynomial described in Table~\ref{table.G inv polys}, we restrict to the unit sphere and take the homogeneous extension to obtain a positively homogeneous map.
\end{itemize}

\begin{table}[t]
\caption{Group-invariant polynomial maps\label{table.G inv polys}}
\centering
\begin{tabular}{>{$}c<{$} >{$}c<{$} >{$}c<{$} >{$}c<{$} >{$}c<{$}}\Xhline{1.2pt}
\noalign{\vskip 0.3em}
V & G & p(x) & \text{native target space} & n \\[0.3em] \Xhline{1.2pt}
\noalign{\vskip 0.3em}
\mathbb{R}^d & \{\pm I\} & xx^\top & \mathbb{R}^{d\times d}_{\operatorname{sym}} & \binom{d+1}{2} \\[0.3em]\hline
\noalign{\vskip 0.3em}
\mathbb{R}^d & S_d & \{\sum_{j=1}^d x_j^k\}_{k=1}^d & \mathbb{R}^d & d \\[0.3em]\hline
\noalign{\vskip 0.3em}
\ell^2(\mathbb{Z}_d) & C_d & \{\hat{x}(a)\hat{x}(b)\hat{x}(-a-b)\}_{a,b\in\mathbb{Z}_d} & \mathbb{C}^{d^2} & 2d^2 \\[0.3em]\hline
\noalign{\vskip 0.3em}
\mathbb{C} & C_r & x^r & \mathbb{C} & 2 \\[0.3em]\hline
\noalign{\vskip 0.3em}
\mathbb{C}^d & S^1 & xx^* & \mathbb{C}^{d\times d}_{\operatorname{sa}} & d^2 \\[0.3em]\hline
\noalign{\vskip 0.3em}
(\mathbb{R}^d)^k & \operatorname{O}(d) & \{\langle x_i,x_j\rangle\}_{i,j\in[k]} & \mathbb{R}^{k\times k}_{\operatorname{sym}} & \binom{k+1}{2}\\[0.3em]\Xhline{1.2pt}
\end{tabular}
\end{table}

In each experiment, we fix the final embedding dimension to be $n = 3 \operatorname{dim}(V/G)$, which is exactly $3d$ when $V = \mathbb{R}^d$ and $G$ is finite.
Our training set consists of vectors with independent standard Gaussian entries.
We trained $10$ times with different initializations.
For RMF, the minimum test set empirical distortion over $2000$ random max-filter banks is recorded. 
We report the best empirical distortions we achieved with a common test set of size $2000$ generated independently with the same distribution.
See Table~\ref{table:results} for the results.
In each row, we bold the trained maps with the smallest distortion, since we seek a universal approach to train low-distortion embeddings.
In some cases, we already know that HPoly is optimal, and in these cases, we italicize the distortion.
For cases of the form $\mathbb{R}^2/C_r$, we note that HPoly fails to achieve the optimal distortion because we selected straightforward invariants in Table~\ref{table.G inv polys}, while the optimal invariants in Example~\ref{ex.optimal coordinates} represent the homogeneous extension of a very particular choice of polynomials.
One distortion value is omitted from Table~\ref{table:results} because our data augmentation approach was too costly to consider ReLU for $S_5$.
In all settings, the best trained $G$-invariant map is LMF.
In the case of $\mathbb{R}^{3}/\{\pm I\}$, LMF is slightly outperformed by the approximate $G$-invariant map ReLU, perhaps because sign invariance is well aligned with the ReLU activation.

{\renewcommand{\arraystretch}{1.5}%
\begin{table}[p]
\caption{Minimum empirical distortions of different orbit spaces}
\label{table:results}
\vspace{-6pt}
\centering
\begin{tabular}{|l||c|c|c|c||c|c|c|}
\hline
\textbf{}  & \textbf{MF} & \textbf{LRMF} & \textbf{LMF} & \textbf{ReLU} & \textbf{RMF} & \textbf{Poly} & \textbf{HPoly}\\
\hline
{$\mathbb{R}^{3}/\{\pm I\} \to \mathbb{R}^{9}$} 
   & 1.85 & 2.56 & 1.76 & \textbf{1.73} & 2.40 & 17.70 & \textit{1.41} \\
\hline
\textbf{$\mathbb{R}^5/S_{5} \to \mathbb{R}^{15}$} 
  & 3.06 & \textbf{1.00} & \textbf{1.00} & — & 5.70 & {4.70e3} & {46.3} \\
\hline
\textbf{$\ell^2(\mathbb{Z}_5)/C_5 \to \mathbb{R}^{15}$} 
   & 4.72 & 3.57 & \textbf{2.88} & 19.0 & 5.58 & {5.18{e}{2}} & {132}\\
\hline
\textbf{$\mathbb{C}/C_3 \to \mathbb{R}^{6}$} 
   & 2.93 & 2.08 & \textbf{1.60} & 2.58{e}{3} & 3.05  & 2.42{e}{4} & 3.00 \\
\hline
\textbf{$\mathbb{C}/C_4 \to \mathbb{R}^{6}$} 
   & 4.16 & 2.08 & \textbf{1.54} & 16.20 & 4.43 & 3.89{e}{6} & 4.00 \\
\hline
\textbf{$ \mathbb{C}^{2}/S^1 \to \mathbb{R}^{9}$} 
  & 2.21 & 1.84 & \textbf{1.58} & 9.71 &  2.59 & 22.37 & \textit{1.41} \\
\hline
\textbf{$(\mathbb{R}^{3})^{3}/\operatorname{O}(3) \to \mathbb{R}^{18}$} 
  & 2.80 & 1.65 & \textbf{1.41} & 2.35{e}{3} & 4.79 & 10.79 & 3.78 \\
\hline
\end{tabular}
\end{table}

\begin{table}[p]
\caption{Minimum empirical distortions of $\mathbb{R}^2 / \{\pm I\}$}
\label{table:Z2_scaling}
\vspace{-6pt}
\centering
\begin{tabular}{|l||c|c|c|c||c|c|c|}
\hline
\textbf{} & \textbf{MF} & \textbf{LRMF} & \textbf{LMF} & \textbf{ReLU} & \textbf{RMF} & \textbf{Poly} & \textbf{HPoly} \\
\hline
\textbf{$\mathbb{R}^2/\{\pm I\} \to \mathbb{R}^8$} 
  & 1.84 & 2.01 & 1.66 & \textbf{1.64} & 1.92 & 163 & \textit{1.41}\\
\hline
\textbf{$\mathbb{R}^2/\{\pm I\} \to \mathbb{R}^{16}$} 
  & 1.67 & 1.54 & \textbf{1.46} & 1.63 & 1.78 & 163 & \textit{1.41}\\
\hline
\textbf{$\mathbb{R}^2/\{\pm I\} \to \mathbb{R}^{32}$} 
  & 1.66 &  1.49 & \textbf{1.44} & 1.58 & 1.80 & 163 & \textit{1.41}\\
\hline
\textbf{$\mathbb{R}^2/\{\pm I\} \to \mathbb{R}^{256}$} 
  & 1.66 & \textbf{1.42} & \textbf{1.42} & 1.51 & 1.76 & 163 & \textit{1.41}\\
\hline
\end{tabular}
\end{table}

\begin{table}[p]
\caption{Minimum empirical distortions of $\mathbb{C}^2 / S^1$}
\label{table:C2_scaling}
\vspace{-6pt}
\centering
\begin{tabular}{|l||c|c|c|c||c|c|c|}
\hline
\textbf{} & \textbf{MF} & \textbf{LRMF} & \textbf{LMF} & \textbf{ReLU} & \textbf{RMF} & \textbf{Poly} & \textbf{HPoly} \\
\hline
\textbf{$\mathbb{C}^2/S^1 \to \mathbb{R}^8$} 
  & 2.20 & 1.81 & \textbf{1.64} & 15.26 & 2.52 & 16.89 & \textit{1.41} \\
\hline
\textbf{$\mathbb{C}^2/S^1 \to \mathbb{R}^{16}$} 
  & 2.16  & 1.66 & \textbf{1.47} & 7.19 & 2.46 & 16.89 & \textit{1.41} \\
\hline
\textbf{$\mathbb{C}^2/S^1 \to \mathbb{R}^{32}$} 
  & 2.16  & 1.52 & \textbf{1.44} & 5.61 & 2.30 & 16.89 & \textit{1.41} \\
\hline
\textbf{$\mathbb{C}^2/S^1 \to \mathbb{R}^{256}$} 
  & 2.16  & \textbf{1.42} & \textbf{1.42} & 3.29 & 2.18 & 16.89 & \textit{1.41} \\
\hline
\end{tabular}
\end{table}

\begin{table}[p]
\caption{Minimum empirical distortions of  $(\mathbb{R}^{2})^2 / \operatorname{O}(2)$}
\label{table:O2_scaling}
\vspace{-6pt}
\centering
\begin{tabular}{|l||c|c|c|c||c|c|c|}
\hline
\textbf{} & \textbf{MF} & \textbf{LRMF} & \textbf{LMF} & \textbf{ReLU} & \textbf{RMF} & \textbf{Poly} & \textbf{HPoly} \\
\hline
\textbf{$(\mathbb{R}^{2})^2 / \operatorname{O}(2) \to \mathbb{R}^4$} 
  & 3.23  & 2.15 & \textbf{1.41} & 6.32e{4} & 3.47 & 47.63 & 8.75 \\
\hline
\textbf{$(\mathbb{R}^{2})^2 / \operatorname{O}(2) \to \mathbb{R}^8$} 
  & 2.55  & 1.46 & \textbf{1.40} & 5.23e{4} & 2.91 & 47.63 & 8.75 \\
\hline
\textbf{$(\mathbb{R}^{2})^2 / \operatorname{O}(2) \to \mathbb{R}^{16}$} 
  & 2.43 & \textbf{1.40} & \textbf{1.40} & 1.13e{5} & 2.77 & 47.63 & 8.75 \\
\hline
\end{tabular}
\end{table}

\begin{table}[p]
\caption{Minimum distortions of connected U.S.\ congressional districts}
    \label{table:district_dist}
    \vspace{-6pt}
    \centering
    \begin{tabular}{|l||c|c|c|c||c|}
        \hline
        & \textbf{MF} & \textbf{LRMF} & \textbf{LMF} & \textbf{ReLU} & \textbf{RMF} \\
        \hline
        \textbf{$(\mathbb{R}^{2})^{50}/(\operatorname{O}(2) \times C_{50}) \to \mathbb{R}^{100}$} 
        & 3.73 & 1.81 & \textbf{1.50} & 10.51 & 5.39 \\
        \hline
    \end{tabular}
\end{table}

\begin{table}[p]
    \caption{Minimum distortions of 2D Shape Structure Dataset}
    \label{table:shapes_dist}
    \vspace{-6pt}
    \centering
    \begin{tabular}{|l||c|c|c|c||c|}
        \hline
        & \textbf{MF} & \textbf{LRMF} & \textbf{LMF} & \textbf{ReLU} & \textbf{RMF} \\
        \hline
        \textbf{$(\mathbb{R}^{2})^{100}/(\operatorname{O}(2) \times C_{100}) \to \mathbb{R}^{200}$} 
        & 4.62 & 2.16 & \textbf{2.00} & 491 & 6.25 \\
        \hline
    \end{tabular}
\end{table}

Next, we isolate the cases of $\mathbb{R}^2/\{\pm I\}$, $\mathbb{C}^2/S^1$, and $(\mathbb{R}^2)^2/\operatorname{O}(2)$.
In the first case, the optimal distortion is known to be $\sqrt{2}$, and furthermore, this can be approached by LMF as a consequence of Theorem~\ref{thm.main result}(b).
This is corroborated by Table~\ref{table:Z2_scaling}.
Recall that the infimal distortion achievable by max filter banks is slightly larger: $\sqrt{\pi/(\pi-2)}\approx 1.659$.
The difference between MF and LMF in this case is illustrated in Figure~\ref{fig:R2Z2_viz}.
Apparently, the final linear layer in LMF has the effect of making the ice cream cone more squat.
For the other two cases where $\mathbb{C}^2/S^1$ and $(\mathbb{R}^2)^2/\operatorname{O}(2)$, we also know that the optimal distortion is~$\sqrt{2}$~\cite{CahillIM:24,Blum-SmitEtal:25}, but we do not yet have theory for LMF in this setting.
But judging from Tables~\ref{table:C2_scaling} and~\ref{table:O2_scaling}, LMF appears to approach the optimal distortion in these cases, too!
(The fact that the empirical distortion dips below $\sqrt{2}$ in the latter case stems from the fact that the empirical distortion quantifies the distortion of the test set, not the entire orbit space.)
Clearly, these two orbit spaces warrant further investigation as progress on Problem~\ref{problem.first problem}.

\begin{figure}[t]
    \centering

    \begin{subfigure}[b]{0.22\textwidth}
        \centering
        \includegraphics[height=\textwidth,trim=5.5cm 5cm 5cm 5cm,clip]{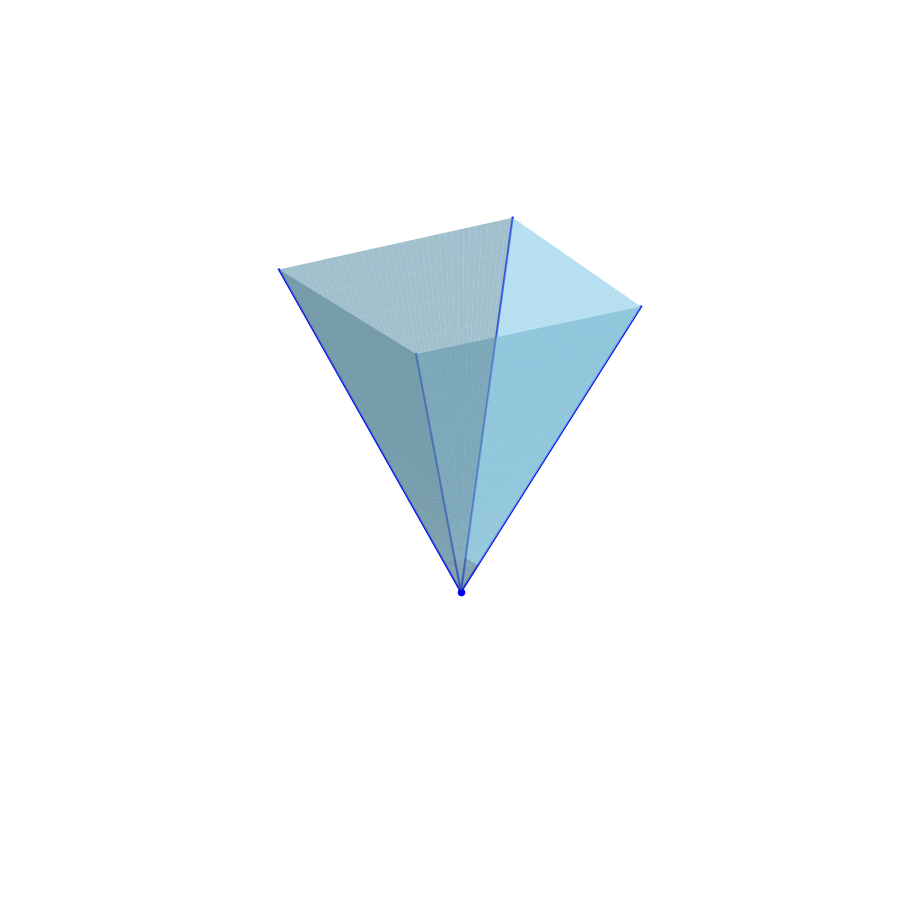}
        \subcaption{$n=4$}
    \end{subfigure}
    \hfill
    \begin{subfigure}[b]{0.22\textwidth}
        \centering
        \includegraphics[height=\textwidth,trim=5.5cm 5cm 5cm 5cm,clip]{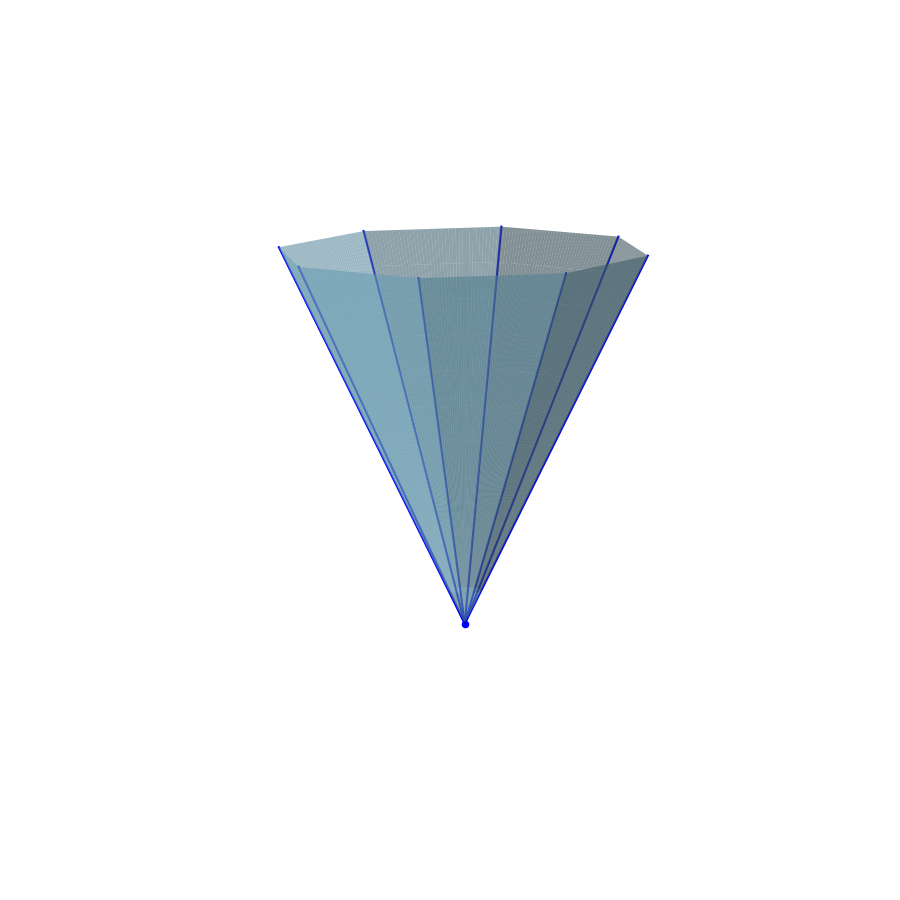}
        \subcaption{$n=8$}
    \end{subfigure}
    \hfill
    \begin{subfigure}[b]{0.22\textwidth}
        \centering
        \includegraphics[height=\textwidth,trim=6cm 5cm 5cm 5cm,clip]{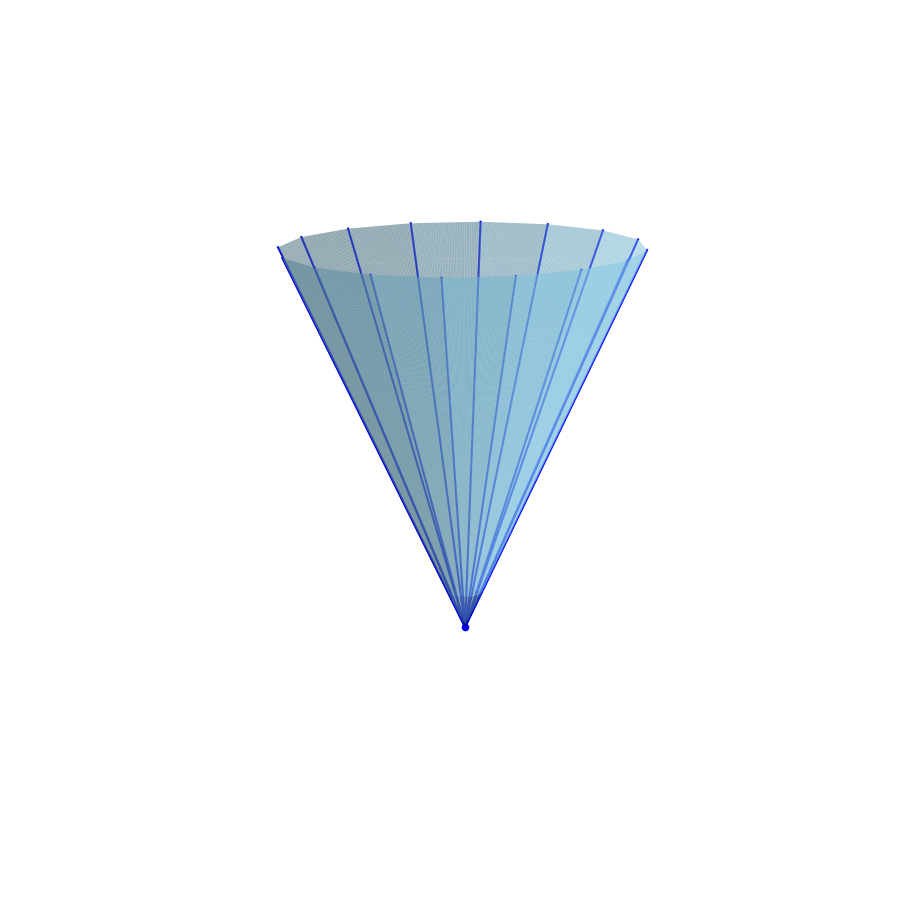}
        \subcaption{$n=16$}
    \end{subfigure}
    \hfill
    \begin{subfigure}[b]{0.22\textwidth}
        \centering
        \includegraphics[height=\textwidth,trim=5.7cm 5cm 5cm 5cm,clip]{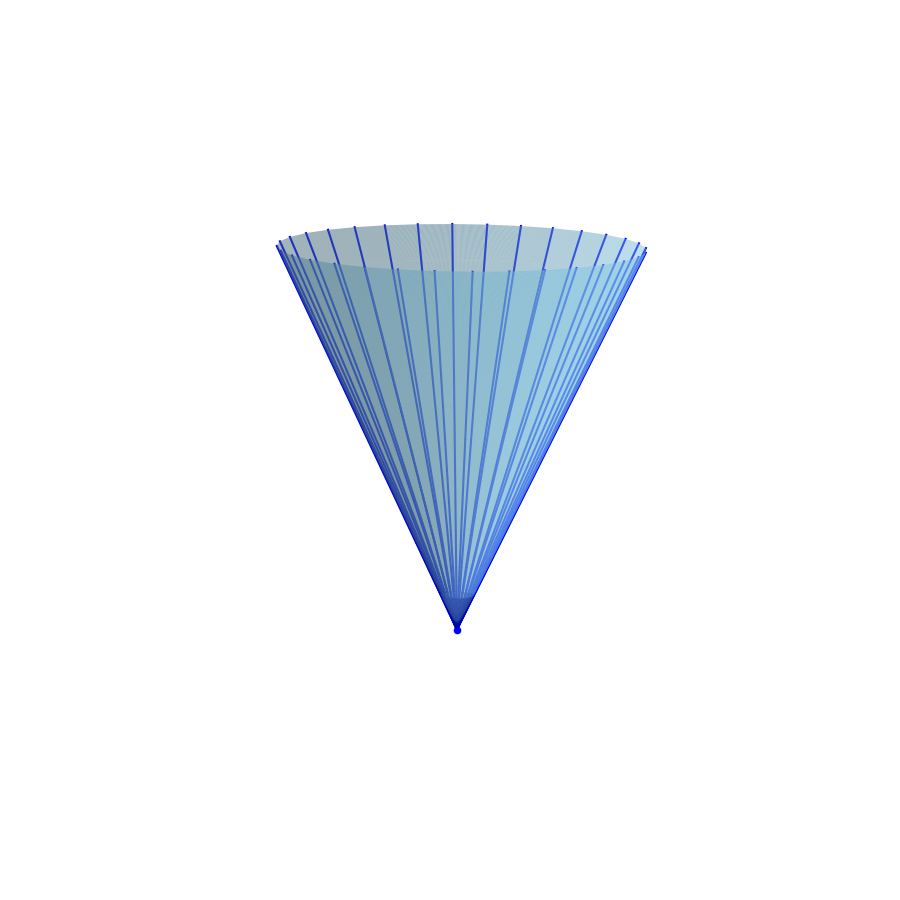}
        \subcaption{$n=32$}
    \end{subfigure}
    
    \vspace{1em}
    
    \begin{subfigure}[b]{0.22\textwidth}
        \centering
        \includegraphics[height=\textwidth,trim=5.5cm 5cm 5cm 5cm,clip]{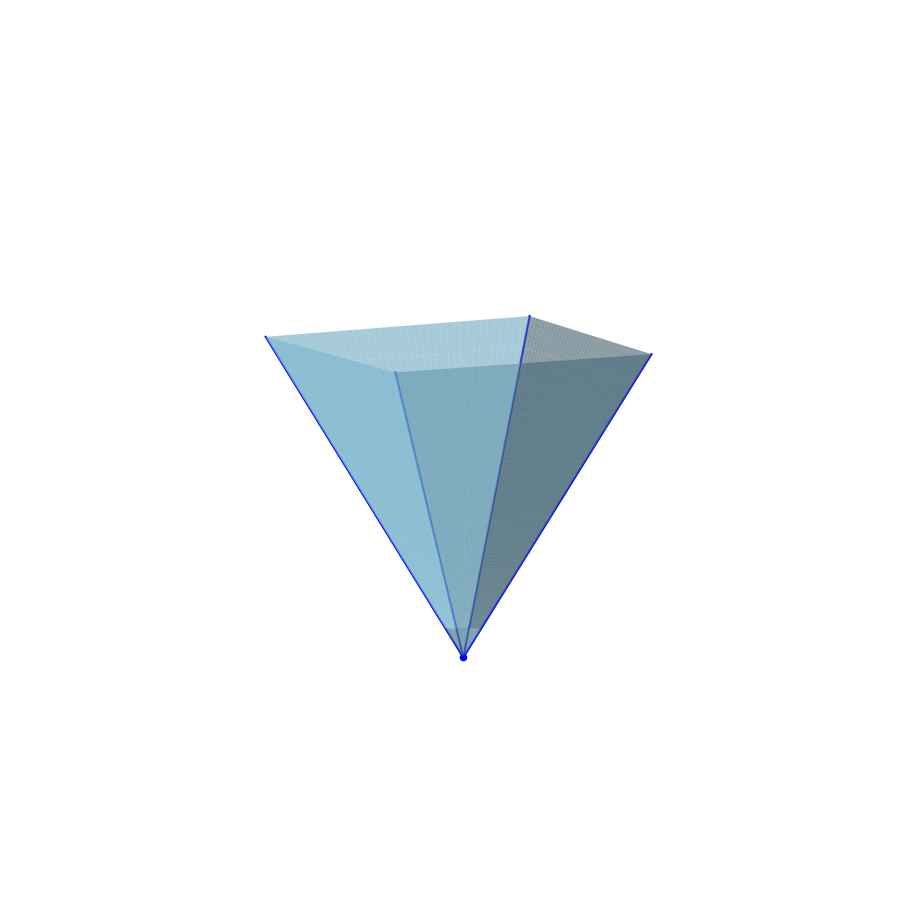}
        \subcaption{$n=4$}
    \end{subfigure}
    \hfill
    \begin{subfigure}[b]{0.22\textwidth}
        \centering
        \includegraphics[height=\textwidth,trim=4.2cm 5cm 5cm 5cm,clip]{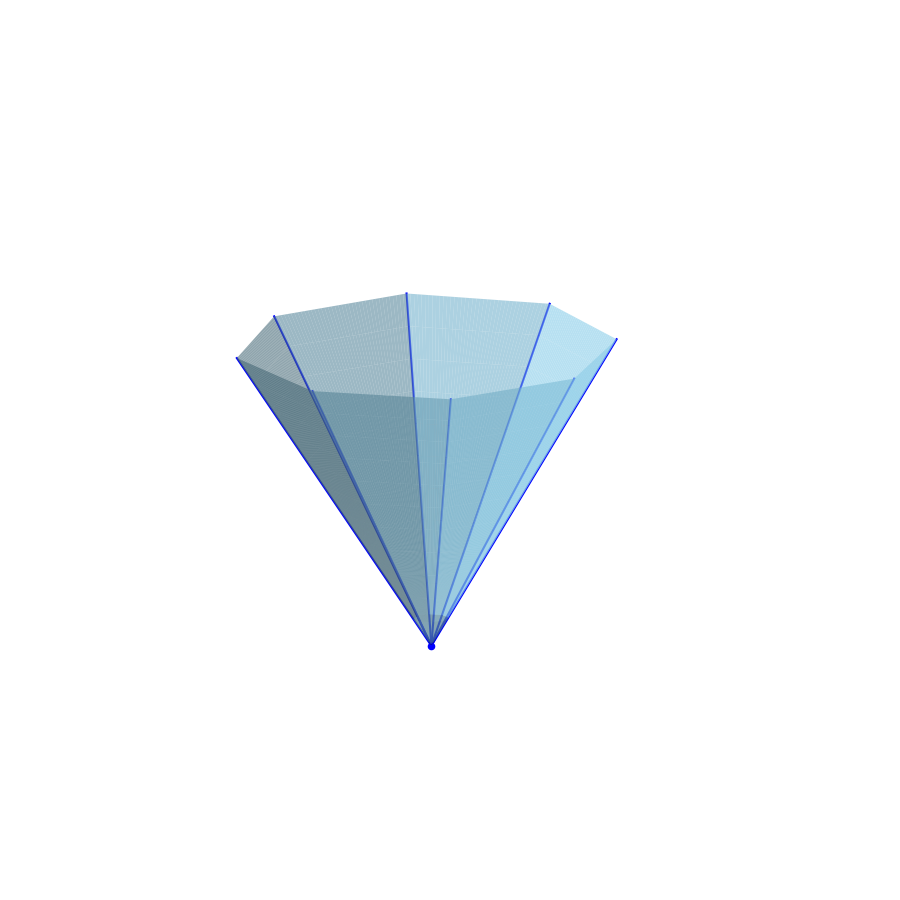}
        \subcaption{$n=8$}
    \end{subfigure}
    \hfill
    \begin{subfigure}[b]{0.22\textwidth}
        \centering
        \includegraphics[height=\textwidth,trim=6.9cm 5cm 5cm 5cm,clip]{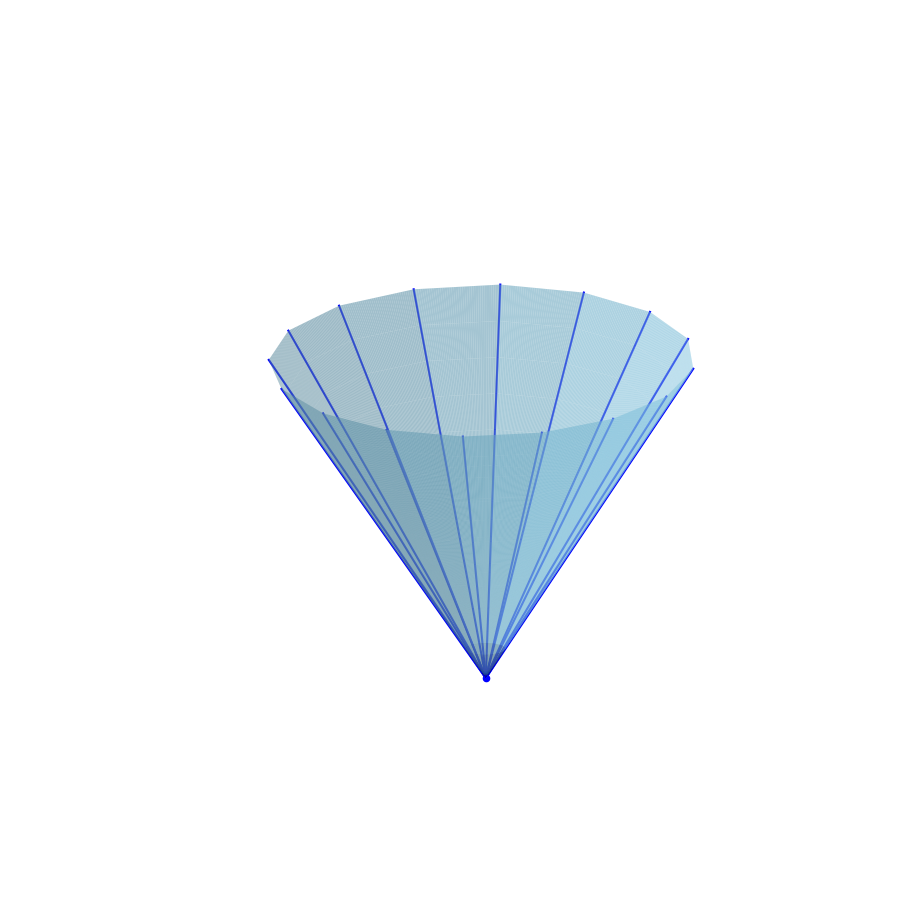}
        \subcaption{$n=16$}
    \end{subfigure}
    \hfill
    \begin{subfigure}[b]{0.22\textwidth}
        \centering
        \includegraphics[height=\textwidth,trim=4.6cm 5cm 5cm 5cm,clip]{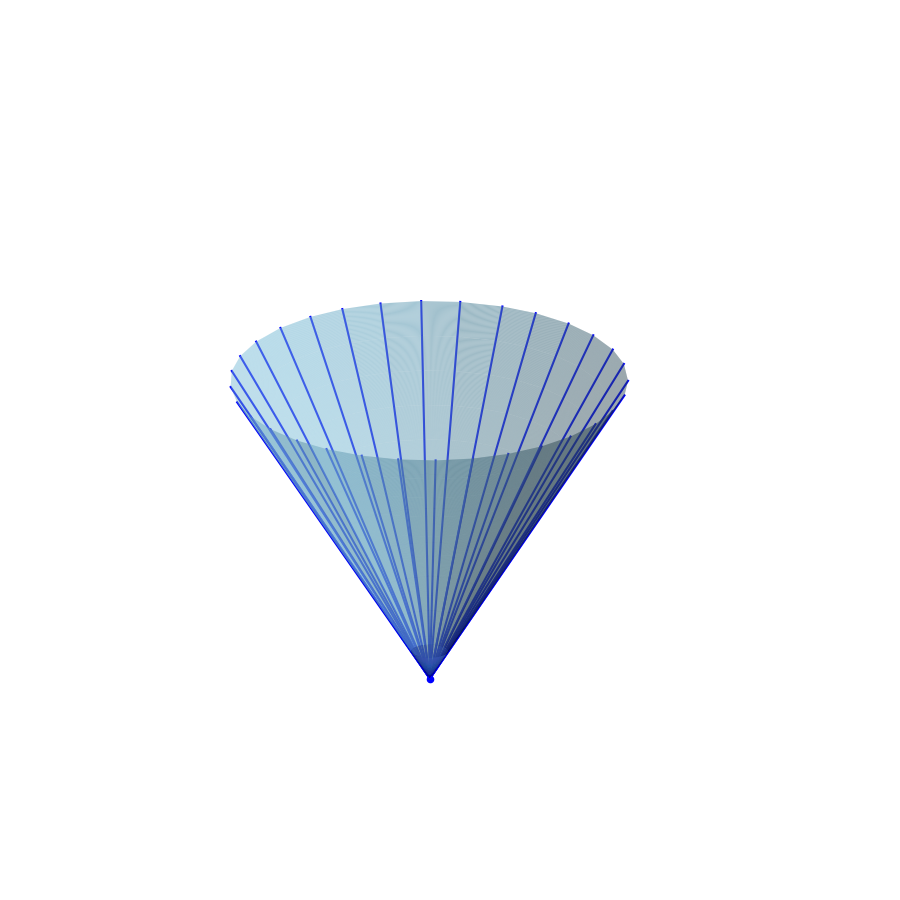}
        \subcaption{$n=32$}
    \end{subfigure}
    
    \caption{The embedding of $\mathbb{R}^2/\{\pm I\}$ into increasing embedding dimensions, visualized in $\mathbb{R}^3$ with the help of principal component analysis. The MF embeddings are above, and the LMF embeddings are below. Both embeddings use the optimal max filter bank, namely, roots of unity.}
    \label{fig:R2Z2_viz}
\end{figure}

In practice, we might not require a bilipschitz embedding of the entire orbit space, but rather of a subset $\mathcal{S}$ of the orbit space that data resides in.
This suggests that we use a training set (assumed to be representative of $\mathcal{S}$) to learn a bilipschitz embedding $\mathcal{S}\to\mathbb{R}^n$.
In what follows, we embed two different group-invariant training sets into a Euclidean space with low distortion. 
Here, we are not evaluating generalization, so we do not consider a test set that is distinct from the training set.
In both experiments, we consider shape data in the plane. 
We represent each example as a $k$-vertex origin-centered polygon, which we view as a member of the orbit space $(\mathbb{R}^{2})^{k}/(\operatorname{O}(2) \times C_{k})$.
Here, $\operatorname{O}(2)$ acts on the shape as a subset of the plane, while $C_k$ acts by cyclically permuting the vertices. 

Our first dataset consists of the 119th U.S.\ congressional districts. 
We discard the $17$ congressional districts that are disconnected, leaving $427$ districts to be embedded.
After sampling the boundary of each district at $k=50$ uniformly spaced points, we embed these in $\mathbb{R}^n$ with $n=100$.
Table~\ref{table:district_dist} reports the minimum distortions achieved over $10$ independently trained models of each type (or in the case of RMF, taking the minimum over $2000$ random max filter banks).
Figure \ref{fig:district_embedding} illustrates the top two principal components of the best LMF embedding. 
Apparently, the horizontal axis captures the extent to which a district is convex, while the vertical axis represents eccentricity.
The districts in red have been criticized by news outlets as being gerrymandered~\cite{Buchholz:online,Meyers:online}.
These districts generally appear on the bottom left, thereby suggesting that our embedding is geometrically relevant.

Next, we embed the 2D Shape Structure Dataset~\cite{CarlierEtal:online}.
This dataset consists of shapes from $70$ different classes. 
Each shape is sampled at $k=100$ points, and so we double the embedding dimension to $n=200$.
See Table~\ref{table:shapes_dist} for the distortion results. 
Once again, LMF achieves the best distortion. 
As before, we visualize this shape space in Figure~\ref{fig:shape_embedding}. 
Here, the horizontal axis appears to represent a measure of eccentricity, with longer shapes on the left and more compact ones on the right. 
Meanwhile, the vertical axis appears to represent a measure of convexity, though this is approximate as can be seen by the ``glass'' class that does not follow this trend.
We highlight a few shape classes for illustration purposes. 
Visual outliers in these clusters are embedded as outliers in shape space, such as a glass that does not resemble a typical glass or mugs with trivial topology. 

Both of these experiments illustrate the utility of the LMF approach to training a low-distortion map on group-invariant data.
These results suggest that group-invariant machine learning would benefit from training an LMF feature map.

\begin{figure}[p]
    \centering
    \includegraphics[width=\linewidth]{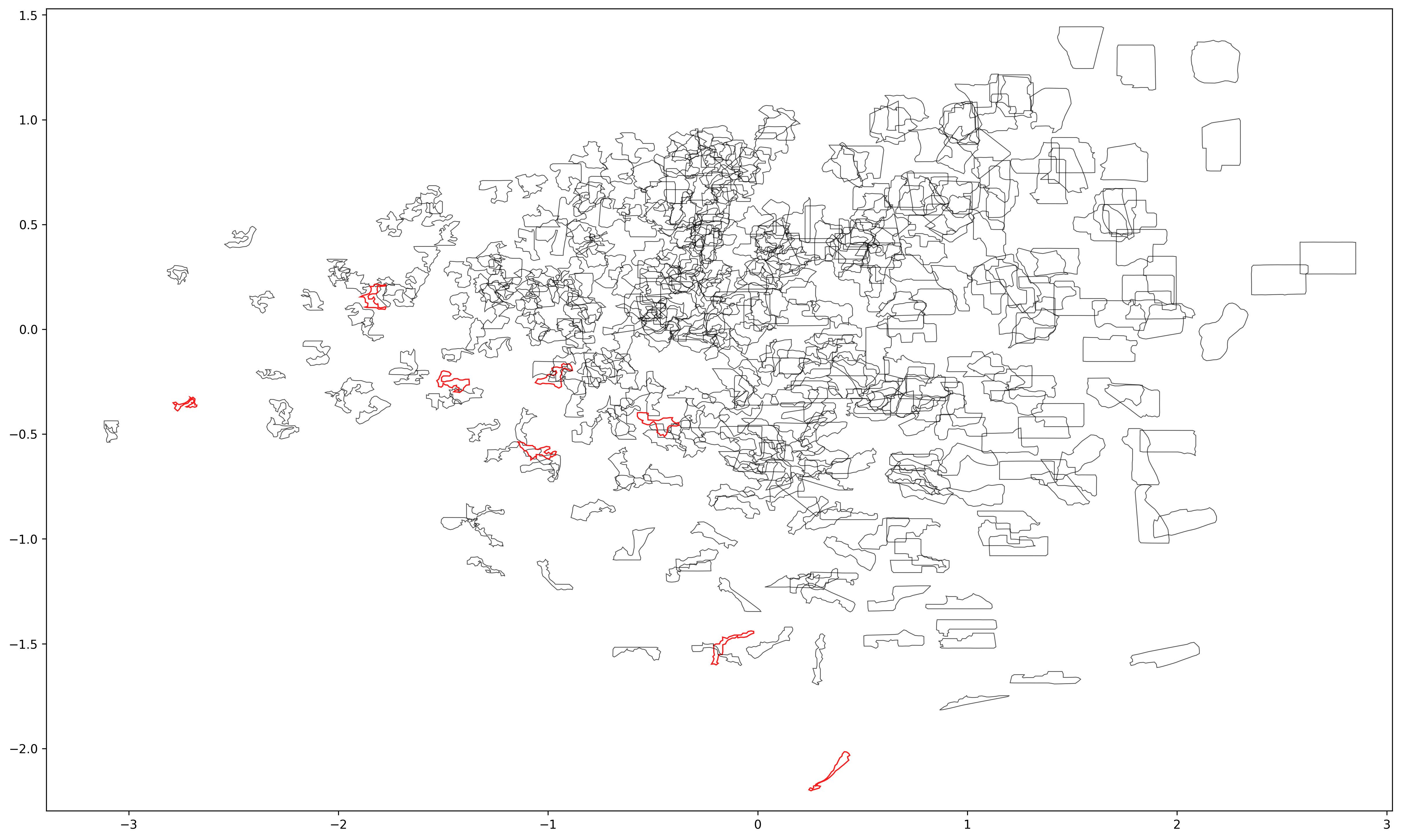}
    \caption{Connected U.S.\ congressional districts embedded using LMF, and then visualized using principal component analysis.}
    \label{fig:district_embedding}
\end{figure}

\begin{figure}[p]
    \centering
    \includegraphics[width=\linewidth]{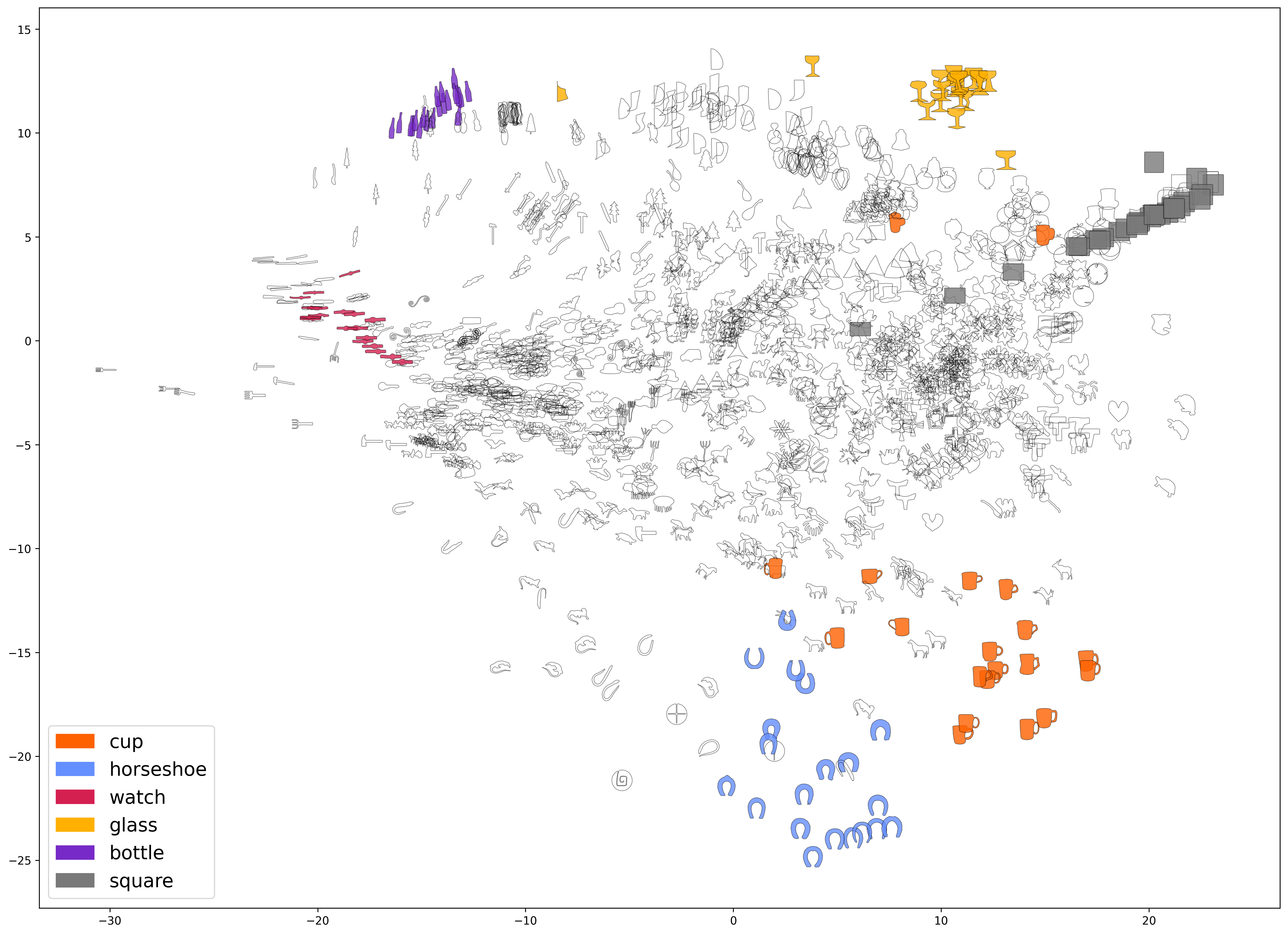}
    \caption{2D Shape Structure Dataset embedded using LMF, and then visualized using principal component analysis.}
    \label{fig:shape_embedding}
\end{figure}

\section*{Acknowledgments}

JWI was supported by a grant from the Simons Foundation, and by NSF DMS 2220301.
DGM was supported by NSF DMS 2220304, and thanks OpenAI for generously providing complimentary access to the Pro tier of ChatGPT, which suggested using Rodrigues formula in the proof of Theorem~\ref{thm.integral of max filters for pmI invariant polynomials}.

\end{document}